\newcommand{\tcb}{\textcolor{blue}}
\newcommand{\tcr}{\textcolor{red}}
\newtheorem{theorem}{Theorem}
\newtheorem{lemma}[theorem]{Lemma}
\newtheorem{proposition}[theorem]{Proposition}
\newtheorem{example}[theorem]{Example}
\newtheorem{corollary}[theorem]{Corollary}
\newtheorem{remark}[theorem]{Remark}
\newcommand{\rvline}{\hspace*{-\arraycolsep}\vline\hspace*{-\arraycolsep}}
\newcommand{\weight}[0]{\operatorname{weight}}
\newcommand{\trans}[1]{\mathchoice{\xrightarrow{#1}}{\xrightarrow{\smash{\lower1pt\hbox{$\scriptstyle #1$}}}}{\xrightarrow{#1}}{\xrightarrow{#1}}}
\newcommand{\tuple}[1]{\langle #1 \rangle}
\begin{document}
\title{Mahler equations for Zeckendorf numeration}
 
\thanks{The first, author was supported by the Agence Nationale de la
  Recherche through the project ``SymDynAr'' (ANR-23-CE40-0024-01).  The
  second author was supported by the EPSRC, grant number EP/V007459/2.}
\author[O.~Carton]{Olivier Carton}
\address{Institut universitaire de France et IRIF, Université Paris-Cité, France}
\author[R. Yassawi]{Reem Yassawi}
\address{School of Mathematical Sciences, Queen Mary University of London,
  United Kingdom}
\email{Olivier.Carton@irif.fr}
\email{r.yassawi@qmul.ac.uk}
\thanks{}
\date{\today}
\keywords{weighted automata; automata sequences; Zeckendorf numeration; Pisot numerations, Mahler equations}
\subjclass[2020]{11B85, 68Q45}
\maketitle
\begin{abstract}
  A Pisot numeration system $U=(u_n)_{n⩾ 0}$ for $ℕ$ is one where the
  sequence of positive integers $(u_n)_{n⩾ 0}$, with $u_n=1$, is generated
  by a recurrence whose polynomial is the minimal polynomial of a Pisot
  number.  The Zeckendorf numeration $Z$ is the simplest such example.  We
  define generalised equations of Z-Mahler type, and we show that if a
  sequence over a commutative ring is \emph{Z-regular}, then it is the
  sequence of coefficients of a series which is a solution of a Z-Mahler
  equation. Conversely, if the Z-Mahler equation is \emph{isolating}, then
  its solutions define Z-regular sequences. This is a generalisation of
  results of Becker and Dumas. We provide an example to show that there
  exist non-isolating Z-Mahler equations whose solutions do not define
  Z-regular sequences. Our proof yields a new construction of weighted
  automata that generate classical $q$-regular sequences. Our results can
  be generalised to numeration systems generated by recurrences whose
  characteristic polynomial is the minimal polynomial of a Pisot number.
\end{abstract}

\section{Introduction}\label{sec:intro}

Christol’s theorem is a bridge between algebraicity and finite state
computability. It characterises the algebraicity of formal power series
$f(x)$ with coefficients in a finite field of characteristic $p$, as being
those where the $n$-th coefficient of $f(x)$ is computed by a deterministic
finite automaton using the base-$p$ representation of $n$. Such sequences
are called \emph{$p$-automatic.}  Christol's theorem is firmly anchored in
algebra, and its beauty lies in the fact that it connects algebraicity of
series over finite fields to automata theory via numeration. However, it
falls short of being a complete characterisation of automatic sequences, as
it only characterises $q$-automaticity when $q=p^k$ is a prime power.
Another limitation is that it is only concerned with sequences over finite
alphabets.

On the automata side, both these shortcomings are overcome with the notion
of \emph{regularity}, introduced by Allouche and Shallit in
\cite{Allouche-Shallit-1992}. Just as automatic sequences are generated by
deterministic automata, regular sequences are generated by \emph{weighted}
automata.  By a weighted automaton, we mean a nondeterministic automaton
where each transition is labeled from an alphabet $B$ and in addition
carries a weight from a ring $R$; see Section~\ref{sec:weighted-automata}
for a precise definition. A word~$w$ on~$B$ is assigned a weight by the
automaton, namely, the sum of weights of all possible paths with $w$ as
label. Now sequences are again generated via numeration, but their values
are weights in $R$. Automatic sequences are regular, and if a regular
sequence is generated by a weighted automaton with weights from a finite
ring, then it is automatic \cite{Allouche-Shallit-1992}.

On the algebraic side, algebraic equations over the field $\mathbb{F}_q$ are
replaced by \emph{$k$-Mahler} equations over a commutative ring $R$,
defined as follows.  Let $k ⩾ 2$ be any natural number.  Define the linear
operator $Φ: R⟦ x⟧→ R⟦ x⟧$ as $Φ(f(x))=f(x^k)$.  Let $A_i(x) ∈ R[x]$ be
polynomials. The equation
\begin{align}\label{eq:q-Mahler}
  P(x,y) = ∑_{i=0}^dA_i(x)Φ^i(y)=0
\end{align}
is called a \emph{$k$-Mahler equation}, and if $f∈ R⟦ x⟧$ satisfies
$P(x,f(x))=0$, then it is called \emph{$k$-Mahler}, or a solution of
$P(x,y)=0$. The Mahler equation $P(x,y)$ is \emph{isolating} if
 the coefficient $A_0(x)$  of  $f(x)$ in~\eqref{eq:q-Mahler} is the constant polynomial~$1$.    Becker \cite{Becker-1994}, and Dumas\cite{Dumas-1993}
established links between Mahler equations and regularity by showing that,
for natural $k ⩾ 2$, $k$-regular sequences satisfy $k$-Mahler equations, and
conversely that solutions of isolating $k$-Mahler equations equations are
$k$-regular.  If $q$ is a power of a prime, and $R = \mathbb{F}_q$, then it can be
shown that the properties of being algebraic and $q$-Mahler coincide. Thus
the notion of a $k$-Mahler series is a generalisation of an algebraic
series, and the result of Becker and Dumas can be seen as a generalisation
of Christol's theorem. Furthermore, with the correct definitions of
regularity and $k$-Mahler equations, the proof of this result is quite
similar to a proof of Christol's theorem.

The base-$k$ numeration system is a fundamental underpinning of the notion
of $k$-automaticity.  In \cite{Rigo-2000}, Rigo extended the notion of
automaticity to finite valued sequences generated by abstract numeration
systems $U$: the $n$-th term of a sequence is calculated using its base-$U$
expansion via a deterministic automaton. Allouche and Shallit's notion of
regularity extends to these numeration systems. Combining regularity as
defined by Allouche and Shallit, to the notion of $U$-automaticity as
defined by Rigo, we obtain U-regular sequences.  A well studied family of
such $U$ are given by $U=(u_n)_{n⩾0}$ where $u_0=1$ and where $(u_n)_{n⩾0}$
is an increasing sequence that is obtained using a linear recurrence, whose
characteristic polynomial is a Pisot polynomial; see
Section~\ref{sec:basic-notation} for definitions. In this paper we
establish a Christol-type theorem for the simplest such $U$, namely when
$U=Z=1,2,3,5,…$ is the \emph{Zeckendorf} numeration, generated by the
recurrence $u_{n+1}= u_n+u_{n-1}$, with $u_{-1}=1$, $u_{-2}=0$.  We
introduce $Z$-Mahler equations; our main result is the $Z$-analogue of the
result of Becker and Dumas. One direction, namely that $Z$-regular
sequences are solutions of $Z$-Mahler equations, can be proved similarly to
the base-$k$ case; we do this in Section~\ref{sec:automata-to-Z-Mahler} for
the Zeckendorf numeration.  The proof of the converse direction is
nontrivial; our main result is the following.
\begin{theorem} \label{thm:main-Z-intro}
  Let $R$ be a commutative ring, let $Z$ be the Zeckendorf 
  numeration, and let $P(x,y)=0$ be an isolating Z-Mahler equation. If
  $f = ∑_{n ⩾ 0}f_nx^n$ is a solution of $P(x,f(x))=0$, then there is a
  weighted Z-automaton $\mathcal{A}$ which generates~$f$.
\end{theorem}
Furthermore, we believe that this theorem generalises to any Pisot
numeration.

Although the statement of Theorem~\ref{thm:main-Z-intro} is similar to that
of Becker and Dumas, the proof techniques are different, because in
$k$-numeration, appending a $0$ equals multiplication by~$k$, but this is
no longer true for~Z. This has several consequences. In particular
\emph{Cartier} operators (see \cite[Section~12.2]{Allouche-Shallit}), which
are used extensively in the base-$k$ proof, no longer have the required
properties in the world of Z-numeration.

To overcome the obstacle that we cannot use Cartier operators, our first
contribution is a direct construction of a weighted automaton computing the
coefficients, in the classical setting of the base-$k$ numeration, of the
solution of an isolating $k$-Mahler equation.  It can be used to obtain the
result of Becker and Dumas, or indeed one direction of Christol's theorem,
as in Corollary~\ref{cor:Christol}.  One novelty of our direct approach is
that it reveals the uniform structure of weighted automata that generate
$k$-regular sequences: they are all obtained from a \emph{universal}
weighted $k$-automaton, defined in Section~\ref{sec:def-automaton}; we name
it thus because each $k$-regular sequence can be obtained by instantiating
coefficients in it.
\begin{theorem} \label{thm:main-thm-1}
  Let $k ⩾ 2$ be an integer.  There exists a universal weighted automaton,
  such that for an explicit isolating $k$-Mahler equation $P(x,y)$ over the
  commutative ring~$R$ and an initial condition $f_0∈ R$, the coefficients
  of $P(x,y)$ and $f_0$ instantiate the weights on~$\mathcal{A}$, and the
  resulting weighted automaton $\mathcal{A}_{P,f_0}$ generates the solution
  $f$ of $P(x,y)=0$ with $f(0)=f_0$.
\end{theorem}

We state the relevant definitions and prove Theorem~\ref{thm:main-thm-1} in
Section~\ref{sec:equation-to-automaton}. There we also bound the
cardinality of the state set of~$\mathcal{A}$ as a function of~$P$'s
\emph{exponent} and \emph{height} where the exponent of~$P$ is the minimal
$d$ in~\eqref{eq:q-Mahler} and the height of~$P$ is the maximal degree of
the polynomials $A_i$.

The main obstacle to extending Theorem~\ref{thm:main-thm-1} to the Zeckendorf numeration is that the natural
replacement $ϕ :ℕ → ℕ$ of the map $n ↦ kn$, which we define in
Section~\ref{sec:fun-phi}, is not linear, and this latter property is
crucial in the construction of our automata in Theorem~\ref{thm:main}.
Using work of Frougny and
Solomyak~\cite{Frougny-1992,Frougny-Solomyak-1992} on the normalisation of
addition, we show that the \emph{linearity defect} of~$\phi$ can be
computed by a deterministic automaton.  From this, we introduce in
Section~\ref{sec:operator} a linear operator~$Φ: R⟦ x⟧ → R⟦ x⟧$ on series
which plays the role of the Frobenius operator $x ↦ x^k$ for $k$-Mahler
equations, and with it we define Z-Mahler equations
in~\eqref{eq:Phi-power}.
   
The proof of Theorem~\ref{thm:main-Z-intro} is obtained by combining the
proof of Theorem~\ref{thm:main-thm-1}, along with the tracking of the
linearity defect of~$ϕ$.  Furthermore in Theorem~\ref{thm:main-Z} we bound
the state set of~$\mathcal{A}$ as a function of~$P$ and the golden
ratio~$φ$ although we leave open the question of whether this bound is
tight. Conversely in Corollary~\ref{cor:regular-is-mahler} we see that
Z-regular series are solutions of a Z-Mahler equation.  Becker and Dumas
give examples of non-isolating Mahler equations whose solutions are not
$k$-regular. Similarly, we show in Section~\ref{sec:non-regular} that the
restriction to isolating equations is needed for our construction.  Finding
a complete characterisation of regular sequences is still open in general,
although it has been solved in \cite{Bell-Chyzak-Coons-Dumas} for
$R=\mathbb{C}$. See also \cite{Coons-Spiegelhofer} for a general exposition
of regular sequences, as well as recent developments. At the moment we
do not know how to extend the work of \cite{Bell-Chyzak-Coons-Dumas} to obtain a
complete characterisation of Z-regular sequences; what they do is based on roots of polynomials over $\mathbb C$; we do not have the right setting for this.

We describe the motivation for this paper, originating from the following
two questions. First, this paper arose out of a desire to understand
whether there was a connection between two descriptions of certain
substitutional fixed points as projections of a more regular structure in
higher dimension. The first result concerns the realisation of Pisot
substitutional tilings as a \emph{model set}.  A model set is a projection
of a pseudo-lattice in $ℝ^n ×  G$ into $ℝ^n$, via a
window in a locally compact Abelian group $G$. A \emph{Meyer} set is a
finite set of translates of a model set. The importance of these sets is
shown in Meyer's work \cite[Theoremm V Chapter VII, Theoremm IV, page
48]{Meyer-1972} and their relevance to substitutions was made clear by Lee
and Solomyak \cite{Lee-Solomyak}, who proved that a substitutional tiling
shift has pure discrete spectrum if and only if the discrete set of control
points for the tiling are a Meyer set. The second result is Furstenberg's
theorem \cite{Furstenberg-1967}, which tells us that algebraic functions in
$\mathbb{F}_q⟦ x⟧$ are projections, onto one dimension, of Laurent series expansions
of rational functions in $\mathbb{F}_q(x,y)$. Combining Furstenberg's theorem with
Christol's theorem \cite{Christol-1979, Christol-1980}, we conclude that
these projections are codings of length-$q$ substitutional fixed points.

Second, we were also motivated by the fact that solutions of equations of
Mahler type have provided a useful source of transcendental
numbers. Indeed, Mahler started this line of investigation by showing that
if $α$ is algebraic with $0<|α|<1$, and if $f(z)$ is the solution of the
$2$-Mahler equation $Φ(f(x))= f(x) -x$, then $f(α)$ is
transcendental~\cite{Mahler-1982}. This approach has been greatly
generalised and is now known as Mahler's method, see the book
\cite{Nishioka} by Nishioka devoted to the foundations of this area, as
well as the more recent survey article \cite{Adamczewski} by Adamczewski.
It would be interesting to investigate whether analogous results exist for
Z-Mahler equations.

This paper is organised as follows.  In
Section~\ref{sec:weighted-automata-automatic} we set up notation, and
define weighted automata, regular sequences, and Mahler equations. For the
case of base-$k$ numeration, $k ∈ ℕ$, we describe the correspondence
between $k$-regular sequences and $k$-Mahler equations. In Section
\ref{sec:def-automaton}, we define a universal weighted automaton, and in
Theorem~\ref{thm:main} we show that, by varying the weights, we can
generate any solution of any isolating $k$-Mahler equation.  In Corollary
\ref{cor:Christol} we recover Christol's theorem in the special case where
the ring equals~$\mathbb{F}_q$, with $q$ a power of a prime. We then focus on the
Zeckendorf numeration. In Section~\ref{sec:fun-phi} we define 
a function $n ↦ ϕ(n)$ and show in Corollary~\ref{cor:autom-defect} that the
non-linearity of $ϕ$ can be calculated.  In Section~\ref{sec:operator} we
define Z-Mahler equations. By merging the ideas behind the weighted
automata generating solutions of $k$-Mahler equations and the automaton
computing the linearity defect of $ϕ$, we show in Theorem~\ref{thm:main-Z}
that any solution of an isolating Z-Mahler equation is Z-regular. In the
standard Section~\ref{sec:automata-to-Z-Mahler}, we show that Z-regular
sequences define solutions of Z-Mahler equations. In
Section~\ref{sec:Dumas} we give a result analogous to that of Dumas for
$k$-numeration, slighting relaxing the notion of Z-isolating which
guarantees Z-regularity. Finally in Section~\ref{sec:non-regular} we give
an example of a non-isolating Z-Mahler equation which has non-regular
solutions. We end by describing some open problems.

\section{Mahler equations and weighted automata}
\label{sec:weighted-automata-automatic}

\subsection{Basic notation}\label{sec:basic-notation}

We will work with numeration systems $U=(u_n)_{n⩾0}$ with $u_0=1$, where
$(u_n)_{n⩾ 0}$ is an increasing sequence of integers and where each natural
number can be represented using strings of symbols from a digit set~$B$,
i.e., for each $n$ there is a $k$ and $b_k,…, b_0$ from $B$ such that
$n=∑_{i=0}^k b_iu_i$.  If the sequence $U$ is defined by a linear
recurrence with an irreducible characteristic polynomial whose roots
consist of exactly one real number~$\beta$ with $\beta>1$, while
$|\beta|<1$ for all other roots, we say that $U$ is a \emph{Pisot}
numeration system.  For these numeration systems, every natural number has
a \emph{canonical} representation, which is the greatest representation for
the lexicographic ordering, and we use $(n)_ U$ to denote this canonical
representation. Also, given a word on the digit set, we use $[w]_U$ to
denote the natural number that has $w$ as a (possibly non-canonical)
representation in that system. Despite these definitions, we are permitted
to sometimes pad $(n)_U$ with leading zeros, eg, when we have to compare
the expansions of several integers at a time.

In this article we only work with base-$k$ numerations and Pisot
numerations.  Amongst Pisot numerations, we focus on the \emph{Zeckendorf}
numeration, defined in Section~\ref{sec:Zeckendorf}, as we prefer to give
explicit constructions of the objects that concern us. However, our work in
Section~\ref{sec:Mahler-Zeckendorf} applies to any Pisot numeration.

In Section~\ref{sec:weighted-automata-automatic}, we work with base-$k$
numerations, with $U = (k^n)_{n ⩾ 0}$, whose digit set is
$\{0,1, …, k-1\}$, and the canonical representation $(n)_k$ is that for
which the most significant digit is non-zero.  In
Section~\ref{sec:Mahler-Zeckendorf}, we work with the Zeckendorf
numeration, $Z = (F_n)_{n ⩾ 0}$ defined by the Fibonacci numbers, where the
digit set is $\{0,1\}$ and where the canonical representation $(n)_Z$ has
no consecutive occurrences of the digit~$1$; see
\cite[Section~3.8]{Allouche-Shallit} for a summary.

\subsection{Automata} \label{sec:weighted-automata}

In this section we recall the notion of a weighted automaton. 
  We use
many variants of automata.  The main result (Theorem~\ref{thm:main-Z}) of
this paper is phrased using weighted automata, but we also use other kinds
automata, such as deterministic automata with an output function or even
classical automata.  The inputs of automata are words, that is, sequences
of symbols from an alphabet, and each numeration system~$U$ associates with
each integer~$n$ a word, its canonical representation~$(n)_U$, that can be
fed as input to an automaton.  This allows automata to deal with integers
instead of words.

We assume the reader to be familiar with the basic notions concerning
automata; otherwise we refer the reader to~\cite{Sakarovitch09} for a
complete introduction. An \emph{automaton} $ℬ$ is a tuple $⟨ S,B, Δ, I, F⟩$
where $S$ is the state set, $B$ is the input alphabet, $Δ$ is the
transition relation and $I$ and $F$ are the sets of initial and final
states.  A transition $(p,b,q) ∈ Δ$ is a labelled directed edge between
states and is written $p \trans{b} q$. A word $w = b_1 ⋯ b_n$ is
\emph{accepted} if there is a sequence $q_0 \trans{b_1} q_1 ⋯ q_{n-1}
\trans{b_n} q_n$ of consecutive transitions such that $q_0 ∈ I$ and $q_n ∈
F$.  The automaton is \emph{deterministic} if $I=\{s_0\}$ is a singleton,
and if $p \trans{b} q$ and $p \trans{b} q'$ in~$Δ$ implies $q = q'$.  In
that case, the relation~$Δ$ is a function $Δ: S × B → S$; it can be
extended to a function from~$S × B^*$ to~$S$ by setting $Δ(q,ε) = q$ and
$Δ(s, b_1 ⋯ b_n) ≔ Δ(Δ(s, b_1 ⋯ b_{n-1}), b_n)$.  If the set~$F$ of final
states is replaced by an \emph{output function} $τ: S → A$, where $A$ is
the \emph{output alphabet}, then the automaton defines the function from
$B^*$ to~$A$ which maps the word~$w$ to $τ(Δ(s_0,w))$.  Given a numeration
system~$U$, a sequence $(a_n)_{n ⩾ 0}$ is called \emph{$U$-automatic} if
there is a deterministic automaton $⟨ S,B, Δ,\{s_0\}, τ⟩$ such that $a_n =
τ(Δ(s_0,(n)_U))$ for each $n ⩾ 0$.  For the numeration systems that we
study in this article, this definition is equivalent to requiring the
existence of an automaton such that $a_n = τ(Δ(s_0,w))$ whenever $n=[w]_U$,
for each $n ⩾ 0$.  If $(n)_U$ is read starting with the most significant
digit, we say that $(a_n)_{n ⩾ 0}$ is obtained in \emph{direct reading},
otherwise we say that it is obtained in \emph{reverse reading}. If
$(a_n)_{n ⩾ 0}$ is $U$-automatic, then we will also say that $f(x) ≔ ∑_{n ⩾
  0}a_n x^n$ is \emph{$U$-automatic}.

If $U$ is the classical base-$k$ numeration, $U$-automatic sequences are
called $k$-automatic, and these sequences have been extensively studied
\cite{Allouche-Shallit}. In~\cite{Shallit-1988} Shallit studied more
general $U$-automatic sequences; see also work by Allouche
\cite{Allouche-1992}, Rigo~\cite{Rigo-2000} and Maes and
Rigo~\cite{Maes-Rigo-2002}.

Let $R$ be a commutative ring.  A \emph{weighted} automaton~$\mathcal{A}$ with
weights in $R$ is a tuple $⟨ S,B,Δ,I,F ⟩$, where $S$ is a finite state set,
$B$ is an alphabet, $I:S → R$ and $F:S → R$ are the functions that assign
to each state an initial and a final weight and $Δ : S × B × S → R$ is a
function that assigns to each transition, i.e., to each labelled edge, a
weight. A transition $(s,b,s')$ such that $Δ(s,b,s') = r ∈ R$ is written $s
\trans{b:r} s'$.  A \emph{path}~$γ$ in~$\mathcal{A}$ is a finite sequence $s_0
\trans{b_1:r_1} s_1, s_1 \trans{b_2:r_2} s_2, ⋯ ,s_{n-1} \trans{b_n:r_n}
s_n$ of consecutive transitions.  The label of such a path is the word $w =
b_1 ⋯ b_n$ and the path is written $s_0 \trans{w:r} s_n$ where $r = r_1 ⋯
r_n$.  This notation is consistent with the notation $s \trans{b:r} s'$ for
transitions since a transition can be viewed as a path of
length~$1$.  The \emph{weight} $\weight_{\mathcal{A}}(γ)$ of the path~$γ$ is the
product $I(s_0)r F(s_n) = I(s_0) r_1 ⋯ r_n F(s_n)$.  Furthermore, the
weight of a word $w ∈ B^*$ is the sum of the weights of all paths with
label~$w$ and it is denoted $\weight_{\mathcal{A}}(w)$, i.e.,
\begin{displaymath}
  \weight_{\mathcal{A}}(w) = ∑_{γ = s_0 \trans{w:r} s_n} \weight_{\mathcal{A}}(γ).
\end{displaymath}

In our figures, non-zero initial and final weights are given over small
incoming and outgoing arrows. Missing transitions implicitly have zero
weight. If a state has zero initial weight, it will not have an initial
arrow, likewise for states with final weight zero.

Recall the two-element Boolean semiring $\mathbb{B} = \{0,1\}$, where the
sum and the product are $\max$ and~$\min$ respectively.  A
\emph{non-deterministic} automaton $⟨ S,B,Δ,F ⟩$ is a weighted automaton
where $R = \mathbb{B}$.
  
Let $\mathcal{A}$ be a weighted automaton with $B=\{0,…,k-1\}$ and let $U$
be a numeration system with digit set $\{0, 1, … , k-1\}$.  Define the
sequence $(a_n)_{n ⩾ 0}$ by $a_n ≔ \weight_{\mathcal{A}}((n)_{U})$, where
$(n)_{ U}$ is read starting with the most significant digit. Then we say
that the sequence $(a_n)_{n ⩾ 0}$ and the generating function
$f(x) ≔ ∑_{n⩾ 0}a_nx^n$ are \emph{$U$-regular}, generated by~$\mathcal{A}$.
If $U$ is the base-$k$ numeration, then a weighted automaton that generates
a $k$-regular sequence is called a weighted $k$-automaton. The notion of a
$k$-regular sequence was introduced by Allouche and Shallit, that it is
equivalent to this definition is shown in \cite[Theorem
2.2]{Allouche-Shallit-1992}. Rigo studied automatic sequences for abstract
numeration systems, using automata with output in \cite{Rigo-2000}. With
Maes in \cite{Maes-Rigo-2002}, he defined $U$-regularity, generalising the
approach introduced in \cite{Allouche-Shallit-1992}, obtaining some of the
equivalences of \cite[Theorem 2.2]{Allouche-Shallit-1992}, based mainly on
the notion of a kernel, but not the one which our approach is based on,
namely \cite[Theorem 2.2 (e)]{Allouche-Shallit-1992}. The notion of
$U$-regular sequences via the kernel is also investigated by Charlier,
Cisternino and Stipulanti in \cite{CCS}.

\begin{example}\label{ex:reg-not-aut}\normalfont
  In Figure~\ref{fig:weighted0} we give a weighted 2-automaton with weights
  in~$ℕ$ that generates the sequence $(a_n)_{n ⩾ 0}$ where $a_n$ is the
  number of occurrences of the digit $1$ in~$(n)_2$.  As the state~$s$ is
  the only state with a nontrivial initial weight, and the state~$t$ is the
  only state with a nontrivial final weight, then, to compute the $n$-th
  term, one sums the weights of all paths with label~$(n)_2$ from the state
  $s$ to the state $t$.  Note that there are as many such paths as the
  number of occurrences of the digit~$1$ in~$(n)_2$.  Since each such path
  has weight~$1$, the sum is the number of~$1$'s in~$(n)_2$.
  \begin{figure}[htbp]
  \begin{center}
  \begin{tikzpicture}[->,>=stealth',semithick,auto,inner sep=3pt]
    \tikzstyle{every state}=[minimum size=14]
    \node[state] (q00) at (0,0) {$s$};
    \node (q00-in) at (-1,0) {} ;
    \node (q01-out) at (4,0) {} ;
    \node[state] (q01) at (3,0) {$t$};
    \path (q00-in) edge node {$\tcr{1}$} (q00);
    \path (q01) edge node {$\tcr{1}$} (q01-out);
    \path (q00) edge[out=120,in=60,loop] node {$\begin{array}{c}
                                                  \tcb{0}{:}\tcr{1} \\
                                                  \tcb{1}{:}\tcr{1}
                                                \end{array}$} (q00);
    \path (q00) edge node {$\tcb{1}{:}\tcr{1}$} (q01) ;
    \path (q01) edge[out=120,in=60,loop] node {$\begin{array}{c} 
                                                  \tcb{0}{:}\tcr{1} \\
                                                  \tcb{1}{:}\tcr{1}
                                                \end{array}$} (q01);
  \end{tikzpicture}
  \end{center}
  \caption{A weighted automaton that generates the $(a_n)_{n ⩾ 0}$,
    where $a_n$ is the number of~$1$'s in~$(n)_2$.  The weight of an edge
    is given in red, and the blue numbers are the digits we read in
    $(n)_2$.}
  \label{fig:weighted0} 
\end{figure}
\end{example}

Note that the sequence in Example~\ref{ex:reg-not-aut} is not automatic, as
it has entries from an infinite alphabet. If the weights of the automaton
of Figure~\ref{fig:weighted0} are considered as elements of~$\mathbb{F}_2$
instead of elements of~$ℕ$, then $a_n = \weight_{\mathcal{A}}((n)_2)$ is
the number of~$1$'s in $(n)_2$ mod~$2$. The sequence $(a_n)_{n ⩾ 0}$ is
thus the Thue-Morse sequence, which is automatic.

\subsubsection{Matrix representations of weighted automata}
\label{sec:matrix-weighted}

Let $\mathcal{A}$ be a weighted automaton and let $n$ be its number of
states.  Then $\mathcal{A}$ can also be represented by a triple
$\tuple{I,μ,F}$ where $I$ is a row vector over~$R$ of dimension $1 × n$,
$μ$ is a morphism from~$B^*$ into the set of $n × n$-matrices over~$R$,
with the usual matrix multiplication, and $F$ is a column vector of
dimension $n × 1$ over~$R$.  The vector~$I$ is the vector of initial
weights, the vector~$F$ is the vector of final weights and, for each
symbol~$b$, $μ(b)$ is the matrix whose $(p,q)$-entry is the weight~$r ∈ R$
of the transition $p \trans{b:r} q$.  Note that with this matricial
notation, $\weight_{\mathcal{A}}(w) = Iμ(w)F$.

\begin{example}
  \normalfont
  The weighted automaton pictured in Figure~\ref{fig:weighted0}
  is represented by $\tuple{I,μ,F}$ where $I= (1,0)$, $F = (0, 1)^t$ and
  the morphism~$μ$ from~$\{0,1\}^*$ to the set of $2 × 2$-matrices
  over~$\mathbb{F}_2$ is given by
  \begin{displaymath}
    μ(0) =
    \left(
      \begin{array}{cc}
        1 & 0 \\
        0 & 1
      \end{array}
    \right)
    \quad\text{and}\quad
    μ(1) = 
    \left(
      \begin{array}{cc}
        1 & 1 \\
        0 & 1
      \end{array}
    \right).
  \end{displaymath}
\end{example}

The following result can be proved as in
\cite[Theorem~2.10]{Allouche-Shallit-1992}.  It will be used in
Section~\ref{sec:non-regular} to show that the solution of a non-isolating
Z-Mahler equation is not Z-regular.
\begin{lemma} \label{lem:asymptotic-bound}
  If the sequence $(a_n)_{n ⩾ 0}$ is complex valued and $U$-regular,
  then there is a positive constant~$c$ such that $a_n= O(n^c)$.
\end{lemma}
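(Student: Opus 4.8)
The plan is to combine the matrix representation of a weighted automaton (Section~\ref{sec:matrix-weighted}) with the elementary observation that the canonical representation $(n)_U$ has length $O(\log n)$ for both of the numeration systems we consider.

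First, since $(a_n)_{n \geq 0}$ is complex valued and $U$-regular, I would take a weighted automaton with weights in~$\mathbb C$ generating it, and write it in the matrix form $\tuple{I,\mu,F}$ with $d$ states, so that $a_n = I\mu((n)_U)F$ for all $n$. Fix a submultiplicative norm $\lVert\cdot\rVert$ on $d\times d$ complex matrices together with compatible norms on row and column vectors, and set $M := \max_{b\in B}\lVert\mu(b)\rVert$ and $M' := \max(1,M)$. For a word $w=b_1\cdots b_\ell$ we get $\lVert\mu(w)\rVert \leq \lVert\mu(b_1)\rVert\cdots\lVert\mu(b_\ell)\rVert \leq (M')^{\ell}$, hence
\begin{displaymath}
  |a_n| \;=\; |I\mu((n)_U)F| \;\leq\; \lVert I\rVert\,\lVert F\rVert\,(M')^{|(n)_U|}.
\end{displaymath}

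Next I would bound $|(n)_U|$. In base~$q$ one has $|(n)_q| = \lfloor\log_q n\rfloor + 1$; in the Zeckendorf numeration $F_i$ grows like $\varphi^i/\sqrt5$, so $|(n)_Z| \leq (\log n)/\log\varphi + O(1)$. In either case there are constants $C,D>0$ depending only on~$U$ with $|(n)_U| \leq C\log n + D$ for all $n\geq 1$; this is exactly where one uses that the basis sequence grows at least geometrically, which holds for $q^i$ and for $F_i$ (and more generally for Pisot-type systems). Substituting,
\begin{displaymath}
  |a_n| \;\leq\; \lVert I\rVert\,\lVert F\rVert\,(M')^{D}\,(M')^{C\log n} \;=\; \bigl(\lVert I\rVert\,\lVert F\rVert\,(M')^{D}\bigr)\,n^{\,C\log M'},
\end{displaymath}
so $a_n = O(n^c)$ with, say, $c := 1 + C\log M' > 0$ (when $M\leq 1$ the sequence is bounded and any positive~$c$ works).

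I do not expect a genuine obstacle here: the argument is the verbatim analogue of the proof of \cite[Theorem~2.10]{Allouche-Shallit-1992}, and the only point needing care is the logarithmic length bound $|(n)_U| = O(\log n)$, which is immediate for the numeration systems at hand. A minor bookkeeping remark is that padding $(n)_U$ with leading zeros changes its length by only an additive constant, hence multiplies the bound by a constant factor $(M')^{O(1)}$ and does not affect the conclusion.
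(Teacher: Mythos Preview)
Your argument is correct and is precisely the approach the paper has in mind: it does not spell out a proof but simply refers to \cite[Theorem~2.10]{Allouche-Shallit-1992}, whose proof is the matrix-norm bound combined with the logarithmic length of $(n)_U$ that you reproduce. Nothing to add.
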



\subsubsection{From weighted automata to automatic sequences}

It is known that any weighted automaton with weights in a finite ring
defines an automatic sequence; see for example
\cite[Thm~16.1.5]{Allouche-Shallit}. Nevertheless, for completeness we
include a proof, which is a slight generalisation of the power set
construction, and which yields a deterministic automaton from a
non-deterministic one. Recall that a direct reading automaton links
automatic sequences to morphic sequences, i.e, codings of fixed points of
substitutions, as shown by Rigo in \cite{Rigo-2000}.

\begin{proposition} \label{prop:weighted-to-automatic}
  Let $\mathcal{A}$ be a weighted automaton with $B=\{0,\dots ,k-1\}$ and
  with weights in a finite commutative ring $R$. Then, for each of direct
  and reverse reading, there exists a $k$-deterministic automaton~$ℬ$ with
  input alphabet $B$, with initial state $s_0$ and with output $τ: S → R$
  such that for each word~$w$, the equality
  $\weight_{\mathcal{A}}(w) = τ(Δ(s_0, w))$ holds.
\end{proposition}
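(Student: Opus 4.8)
The plan is to carry out a weighted subset (or ``Rabin–Scott'') construction, but remembering not merely which states are reachable but the \emph{total weight} with which each state is reached. First I would fix the matrix representation $\tuple{I,\mu,F}$ of $\mathcal A$ as in Section~\ref{sec:matrix-weighted}, so that $\weight_{\mathcal A}(w)=I\mu(w)F$. For direct reading, I would take the state set of $\mathcal B$ to be the set of row vectors $R^{1\times n}$ (this is finite because $R$ is finite and $n$ is fixed), with initial state $s_0 = I$, and transition function $\Delta(v,b) = v\mu(b)$ for $v\in R^{1\times n}$ and $b\in B$. An immediate induction on $|w|$ gives $\Delta(s_0,w) = I\mu(w)$. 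I then define the output $\tau\colon R^{1\times n}\to R$ by $\tau(v)=vF$, so that $\tau(\Delta(s_0,w)) = I\mu(w)F = \weight_{\mathcal A}(w)$, which is exactly the claimed identity. For reverse reading, I would do the mirror-image construction: states are column vectors $R^{n\times1}$, initial state $F$, transition $\Delta(v,b)=\mu(b)v$, and output $\tau(v)=Iv$; the same induction shows $\tau(\Delta(F, b_n\cdots b_1)) = I\mu(b_1\cdots b_n)F$.

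The key steps, in order, are: (1) recall the matrix representation and the formula $\weight_{\mathcal A}(w)=I\mu(w)F$; (2) define $\mathcal B$ with state set $R^{1\times n}$ (finite!), initial state $I$, transitions $v\mapsto v\mu(b)$, and output $v\mapsto vF$; (3) prove $\Delta(s_0,w)=I\mu(w)$ by induction on word length, using that $\mu$ is a monoid morphism; (4) conclude $\tau(\Delta(s_0,w))=\weight_{\mathcal A}(w)$; (5) repeat with left/right roles swapped for reverse reading. One may optionally restrict $\mathcal B$ to the accessible part, i.e.\ the submonoid of $R^{1\times n}$ generated from $I$ under right-multiplication by the $\mu(b)$, but this is not needed for the statement.

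I do not expect any serious obstacle here: the only thing to check is that the construction genuinely yields a \emph{deterministic} automaton with \emph{finitely many} states, and finiteness of the state set rests entirely on the hypothesis that $R$ is finite (so $R^{1\times n}$ has $|R|^n$ elements). The ``slight generalization of the power set construction'' alluded to in the text is precisely the replacement of the Boolean semiring by the finite ring $R$: in the classical subset construction one tracks a subset of states, i.e.\ a vector over $\mathbb B$, whereas here one tracks a vector over $R$. The induction in step~(3) is routine, using $\mu(wb)=\mu(w)\mu(b)$ and $\Delta(v,wb)=\Delta(\Delta(v,w),b)$. The only mild subtlety worth a sentence is that nothing about $R$ being a ring (as opposed to a semiring) is used — commutativity is likewise irrelevant — so the proof goes through verbatim; I would simply note that finiteness is the sole ingredient.
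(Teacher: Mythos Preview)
Your proposal is correct and follows essentially the same approach as the paper's proof: both use the matrix representation $\tuple{I,\mu,F}$, take the state set to be row vectors $R^{1\times n}$ with initial state~$I$, transition $v\mapsto v\mu(b)$, and output $v\mapsto vF$, and then dualise with column vectors for reverse reading. Your additional remarks (the induction step, the optional restriction to the accessible part, and the observation that only finiteness of~$R$ is used) are all accurate and compatible with the paper's treatment.
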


\begin{proof}
  Let $n$ be the number of states of $\mathcal{A}$, and let $\tuple{I,μ,F}$ be a
  matrix representation of dimension~$n$ of the weighted automaton~$\mathcal{A}$ as
  given in Section~\ref{sec:matrix-weighted}.  The weight of a word
  $w ∈ B^*$ is by definition $Iμ(w)F$.
  
  Consider the automaton~$ℬ$ defined as follows. Its state set~$S$ is the
  finite set of all row vectors of dimension~$1 × n$ over~$R$. Its initial
  state is the vector~$I$.  Set $Δ(q,b)≔ qμ(b)$ for each row vector~$q$
  and each digit~$b$.  Finally define the function $τ:S ↦ R$ as $τ(q) = qF$
  for each $q ∈ S$.  It is now routine to check that the automaton~$ℬ$
  defines, in direct reading, the same automatic sequence as the weighted
  automaton.
  
  A reverse deterministic automaton is obtained similarly by taking $S$ to
  be the set of column vectors of dimensions $n × 1$ over~$R$, by taking
  $F$ as initial state and, setting $Δ(q,b) ≔ μ(b)q$.
\end{proof}

Note that Proposition~\ref{prop:weighted-to-automatic} implies the
following.  If $\mathcal{A}$ is a weighted automaton with
$B=\{0,… ,k-1\}$ and with weights in a finite commutative ring~$R$, and
$U$ is a numeration system with digit set~$B$, then the $U$-regular
sequence generated by~$\mathcal{A}$ is $U$-automatic.

\subsection{Robustness of weighted automata}

As we saw in Proposition~\ref{prop:weighted-to-automatic}, there is a
direct link from sequences generated by weighted automata using a
numeration system $ U$ to $ U$-regular sequences. The purpose of this
section is to show that the class of $ U$-regular sequences is closed under
the Cauchy product as soon as addition in $ U$ can be realised by an
automaton.  Obtaining this result is made easier by the use of weighted
automata.  Looking ahead, it is particularly interesting when Christol's
theorem does not hold as for the Zeckendorf numeration system.

An \emph{unambiguous} automaton is a weighted automaton over the ring~$ℤ$
of integers such that each weight, including initial and final weights, is
either $0$ or~$1$, and such that the weight of each word is either $0$
or~$1$.  The first condition implies that the weight of each path is in
$\{0, 1\}$.  The second condition implies that for each word~$w$, there is
at most one path labelled by~$w$ having a positive weight.  If there is
such a path, the word is said to be \emph{accepted}.

Let $u$ and $v$ be two words over alphabets $A$ and~$B$ respectively such
that $|u| = |v|$.  We denote by $u ⊗ v$ the word~$w$ over the alphabet $A ×
B$ such that $|w| = |u| = |v|$ and $w_i = (u_i,v_i)$.  An unambiguous
automaton \emph{realises addition} in a numeration system if it accepts all
words of the form $(m)_U \otimes (n)_U \otimes (m+n)_U $ for non-negative
integers $m$ and~$n$, where the expansions $(m)_U$ and~$(n)_U$ have been
possibly padded with leading zeros to have the same length as $(m+n)_U$,
i.e., we momentarily relax our notation and use $(m)_U$ to refer to any
representation of~$m$.  For example, the automaton pictured in
Figure~\ref{fig:addition2} realizes addition in base~$2$.

\begin{figure}[htbp]
  \begin{center}
  \begin{tikzpicture}[initial text=,->,>=stealth',semithick,auto,inner sep=3pt]
    \tikzstyle{every state}=[minimum size=10]
    \node[state,initial left,accepting below] (q0) at (0,0) {$0$};
    \node[state] (q1) at (4,0) {$1$};
    \path (q0) edge[out=120,in=60,loop] node {$
      \begin{array}{c}
        (0,0,0) \\
        (1,0,1) \\
        (0,1,1)
      \end{array}$} (q0);
  \path (q0) edge[bend left=15] node {$(0,0,1)$} (q1);
  \path (q1) edge[out=120,in=60,loop] node {$
      \begin{array}{c}
        (1,0,0) \\
        (1,1,1) \\
        (1,1,1)
      \end{array}$} (q1);
  \path (q1) edge[bend left=15] node {$(1,1,0)$} (q0);
  \end{tikzpicture}
  \end{center}
  \caption{An automaton recognising addition base-$2$. A string over
    $\{0,1\}^3$, is accepted in direct reading if and only if it equals
    $(m)_2 \otimes (n)_2 \otimes (m+n)_2$. }
  \label{fig:addition2} 
\end{figure}

Recall that the \emph{Cauchy product}, or \emph{convolution} of two series
$f(x) = ∑_{n ⩾ 0}{f_nx^n}$ and $g(x) = ∑_{n ⩾ 0}{g_nx^n}$ is the series
$h(x) = ∑_{n ⩾ 0}{h_nx^n}$ where $h_n = ∑_{i+j = n}f_ig_j$ for each
$n ⩾ 0$.

In the literature, eg \cite{Peltomaki-Salo-2022}, a numeration where
addition is realised by an automaton is called \emph{addable}. This
condition appears as a condition in the following.  The
  next theorem is classical
for integer bases and the proof is usually carried out through the
\emph{kernel} of the sequence, see, eg,
\cite[Theorem~3.1]{Allouche-Shallit-1992}.  The proof we provide is
different and extends better to our setting based on weighted automata.
\begin{theorem} \label{thm:Cauchy-product}
  Suppose that there is an unambiguous automaton realising addition in the
  numeration system~$ U$. If the sequences $f_1 = (f_{1,n})_{n ⩾ 0}$ and
  $f_2 = (f_{2,n})_{n ⩾ 0}$ are generated by weighted automata in~$ U$,
  then the Cauchy product of $f_1$ and~$f_2$ is also generated by a
  weighted automaton in~$ U$.
\end{theorem}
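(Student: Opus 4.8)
The plan is to build, on the single input $(n)_U$, a product of three automata running in parallel: the weighted automaton $\mathcal{A}_1$ reading a guessed $U$-expansion of~$i$, the weighted automaton $\mathcal{A}_2$ reading a guessed $U$-expansion of~$j$, and the unambiguous addition automaton, call it $\mathcal{C}$, reading the triple of these two expansions together with $(n)_U$. The role of $\mathcal{C}$ is to contribute weight~$1$ when $i+j=n$ and weight~$0$ otherwise, so that it acts as a filter selecting exactly the pairs $(i,j)$ with $i+j=n$. Reading $(n)_U=d_1\cdots d_k$ from the most significant digit, the product automaton nondeterministically commits at step~$t$ to digits $a_t$ and $b_t$ of the guessed expansions and multiplies the three corresponding transition weights; summing over all guesses should produce $\sum_{i+j=n}f_{1,i}f_{2,j}=h_n$.

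First I would fix matrix representations $\langle I_1,\mu_1,F_1\rangle$ and $\langle I_2,\mu_2,F_2\rangle$ of $\mathcal{A}_1$ and $\mathcal{A}_2$. The one point requiring care is that, since $(i)_U$ and $(j)_U$ must be padded with leading zeros to the length~$k$ of $(n)_U$ before being fed to $\mathcal{C}$, the tracks of $\mathcal{A}_1$ and $\mathcal{A}_2$ receive those leading-zero paddings, whereas a weighted automaton generating $f_i$ need not assign $f_{i,n}$ to the padded expansions of~$n$. I would therefore replace each $\mathcal{A}_i$ by an equivalent automaton \emph{insensitive to leading zeros}: add one fresh state~$\star_i$, make it the unique state of nonzero initial weight, put a weight-$1$ self-loop on~$\star_i$ labelled by the digit~$0$, give $\star_i$ final weight $I_iF_i$, and add a transition $\star_i \trans{b:r} s'$ of weight $r=(I_i\mu_i(b))_{s'}$ for every nonzero digit~$b$ and every old state~$s'$. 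This delays the start of $\mathcal{A}_i$ until the first nonzero digit and does not change the weight assigned to any canonical expansion, so the new automaton still generates~$f_i$ and now satisfies $\weight_{\mathcal{A}_i}(0^\ell w)=\weight_{\mathcal{A}_i}(w)$. I would also view $\mathcal{C}$, whose alphabet is $\{0,\dots,q-1\}^3$, as a weighted automaton over~$R$ via the canonical morphism $\mathbb{Z}\to R$; unambiguity gives $\weight_{\mathcal{C}}(w)\in\{0_R,1_R\}$, with value~$1_R$ exactly when $w$ is $(i)_U\otimes(j)_U\otimes(i+j)_U$ with the first two components padded to the length of the third. Let $\langle J,\nu,G\rangle$ be a matrix representation of~$\mathcal{C}$.

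Now define the weighted automaton $\mathcal{A}$ over $\{0,\dots,q-1\}$ by $I=I_1\otimes I_2\otimes J$, $F=F_1\otimes F_2\otimes G$, and $\mu(d)=\sum_{a,b\in\{0,\dots,q-1\}}\mu_1(a)\otimes\mu_2(b)\otimes\nu\big((a,b,d)\big)$, where $\otimes$ is the Kronecker product; equivalently, its state set is the product of the three state sets. By the mixed-product property, $\mu((n)_U)=\sum_{a,b}\mu_1(a)\otimes\mu_2(b)\otimes\nu(a\otimes b\otimes(n)_U)$ with $a,b$ ranging over all length-$k$ words, hence
\begin{displaymath}
  \weight_{\mathcal{A}}((n)_U)=\sum_{a,b}\weight_{\mathcal{A}_1}(a)\,\weight_{\mathcal{A}_2}(b)\,\weight_{\mathcal{C}}(a\otimes b\otimes(n)_U).
\end{displaymath}
A term is nonzero only when $a$ and $b$ are the leading-zero paddings, to length~$k$, of the canonical expansions of some $i,j\ge 0$ with $i+j=n$; each such pair $(i,j)$ arises from exactly one $(a,b)$ (as $i,j\le n$ forces their expansions to fit in $k$ digits), and then $\weight_{\mathcal{A}_1}(a)=f_{1,i}$ and $\weight_{\mathcal{A}_2}(b)=f_{2,j}$ by leading-zero insensitivity. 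Therefore $\weight_{\mathcal{A}}((n)_U)=\sum_{i+j=n}f_{1,i}f_{2,j}=h_n$, so $\mathcal{A}$ generates the Cauchy product in~$U$.

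The main obstacle is the leading-zero issue isolated above: the filter $\mathcal{C}$ inevitably forces padded expansions onto the two tracks, so the construction is only correct once $\mathcal{A}_1$ and $\mathcal{A}_2$ have been made insensitive to leading zeros. A small related point is that, in order for no spurious term to survive, $\mathcal{C}$ must also reject words whose first two components are non-canonical $U$-expansions — for Zeckendorf, those containing two consecutive~$1$'s — which is part of the hypothesis that $\mathcal{C}$ realizes addition. Everything else is a routine Kronecker-product computation.
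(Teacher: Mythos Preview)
Your proof is correct and follows essentially the same construction as the paper: a product automaton whose states are triples from the two weighted automata and the unambiguous addition automaton, with nondeterministic guessing of the two summand digits at each step (your Kronecker-product formulation is just a matricial rendering of the paper's explicit transition set). You are in fact more careful than the paper on one point---you explicitly normalize $\mathcal{A}_1,\mathcal{A}_2$ to be insensitive to leading zeros before forming the product, whereas the paper's proof tacitly relies on this.
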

\begin{proof}
  Let $B$ be the digit alphabet of the numeration system~$U$.  Suppose
  that the unambiguous automaton~$\mathcal{A}= ⟨ Q, B^3,Δ, I,F ⟩$ realises
  addition in~$ U$.  Let the sequences $f_1$ and~$f_2$ be generated by the
  weighted automata $ℬ_1= ⟨ S_1,B,Δ_1, I_1, F_1 ⟩$ and
  $ℬ_2= ⟨ S_2,B,Δ_2, I_2, F_2 ⟩$ respectively.  We construct a new weighted
  automaton~$ℬ$ whose state set is $Q × S_1 × S_2$.  The initial
  (respectively final) weight of a state $(q,s_1,s_2)$ of~$ℬ$ is
  $I_1(s_1)I_2(s_2)$ (respectively $F_1(s_1)F_2(s_2)$) if $q$ is initial
  (respectively final) in~$\mathcal{A}$, and $0$ otherwise.  Its transition
  set~$Δ$ is given by
  \begin{displaymath}
    Δ = 
    \left\{ (q,s_1,s_2) \trans{b:α_1α_2} (q',s'_1,s'_2) : ∃ b_1,b_2 ∈ B
      \quad 
      \begin{array}{rl}
        q \trans{b_1,b_2,b} q'   & \text{in } \mathcal{A} \\
        s_1 \trans{b_1:α_1} s'_1 & \text{in } ℬ_1 \\
        s_2 \trans{b_2:α_2} s'_2 & \text{in } ℬ_2 
      \end{array}
    \right\}.   
  \end{displaymath}
  A path $(q,s_1,s_2) \trans{w:α} (q',s'_1,s'_2)$ in~$ℬ$ corresponds first
  to a choice of a path $s \trans{w,u_1,u_2} s'$ in~$\mathcal{A}$ giving two
  words $u_1$ and~$u_2$ such that $[w]_U = [u_1]_U + [u_2]_U$, and second to a
  choice of two paths $s_1 \trans{u_1:α_1} s'_1$ and $s_2 \trans{u_2:α_2}
  s'_2$ in $ℬ_1$ and~$ℬ_2$ such that $α = α_1α_2$.  From this remark and
  the choices of initial and final weights in~$ℬ$, it follows that the
  weight of a word~$w$ is the sum, over all decompositions  $[w]_U = [u_1]_U +
  [u_2]_U$ of the products $α_1α_2$ of the weights $α_1$ and~$α_2$ of $u_1$
  and~$u_2$.  Note that the commutativity of the ring is here essential.
\end{proof}

\subsection{$k$-Mahler equations and weighted automata}
\label{sec:equation-to-automaton}

\subsubsection{From weighted automata to $k$-Mahler equations}
\label{sec:automaton-to-Mahler}

Given a weighted $k$-automaton generating $(a_n)_{n ⩾ 0}$, there is a
standard technique to obtain a Mahler equation for which $∑_{n ⩾ 0} a_nx^n$
is a solution, similar to the techniques described in the proof of
Christol's theorem in \cite[Theorem~12.2.5]{Allouche-Shallit}.
We illustrate with a simple example.
\begin{example} \label{ex:automaton-to-equation}
  Consider again the Thue-Morse sequence $(a_n)_{n ⩾ 0}$, whose weighted
  automaton is given in Figure~\ref{fig:weighted0}, but where here we
  consider the weights in $\mathbb{F}_2$, so that the resulting sequence is
  automatic. We interpret the state~$t$ to be the formal power series
  $t(x)= ∑_{n ⩾ 0} a_n x^n$ in~$\mathbb{F}_2⟦ x⟧$, where
  $a_n = \weight_{\mathcal{A}}((n)_2) ∈ \mathbb{F}_2$. Similarly the
  state~$s$ corresponds to the series $s(x)= ∑_{n ⩾ 0} b_n x^n$
  in~$\mathbb{F}_2⟦ x⟧$, whose coefficients would be generated by the
  automaton pictured in Figure~\ref{fig:weighted2}.  As we read $(n)_2$
  starting with the most significant digit, we have,
  \begin{alignat}{2} \label{eq:recursive}
    a_{2n} &= a_n & \quad\text{ and }\quad a_{2n+1} &= a_n ⊕ b_n \\
    b_{2n} &= b_n & \quad\text{ and }\quad b_{2n+1} &= b_n.\notag 
   \end{alignat}
  where the symbol~$⊕$ denotes the sum in~$\mathbb{F}_2$.
  \begin{figure}[htbp]
  \begin{center}
  \begin{tikzpicture}[->,>=stealth',semithick,auto,inner sep=3pt]
    \tikzstyle{every state}=[minimum size=14]
    \node[state] (q00) at (0,0) {$s$};
    \node (q00-in) at (-1,0) {} ;
    \node (q00-out) at (0,-1) {} ;
    \path (q00-in) edge node {$\tcr{1}$} (q00);
    \path (q00) edge node {$\tcr{1}$} (q00-out);
    \path (q00) edge[out=120,in=60,loop] node {$\begin{array}{c}
                                                  \tcb{0}{:}\tcr{1} \\
                                                  \tcb{1}{:}\tcr{1}
                                                \end{array}$} (q00);
  \end{tikzpicture}
  \end{center}
  \caption{A weighted automaton that generates the series $s(x)$.}
  \label{fig:weighted2} 
  \end{figure}

  Since $\mathbb{F}_2⟦ x⟧$ has characteristic~$2$, then
  \eqref{eq:recursive} implies that
  \begin{equation} \label{eq:recursive-2}
    t(x) = (1+x) t(x^2) + x s(x^2) \text{ and } s(x)= (1+x) s(x^2).
  \end{equation}
  Again using that $\mathbb{F}_2⟦ x⟧$ has characteristic~$2$, we have
   \begin{equation} \label{eq:recursive-3}
    t(x^2) = (1+x)^2 t(x^4) + x^2 s(x^4) \text{ and } s(x^2)= (1+x^2) s(x^4).
  \end{equation}

  Now multiplying \eqref{eq:recursive-2} by~$x$ and substituting \eqref{eq:recursive-3} in~\eqref{eq:recursive-2}, we obtain
  \begin{align*}
    xt(x) &=  (x+x^2+x^3+x^4)t(x^4) +(x^2+x^3)s(x^4) \\
          &=(1+x)^3t(x^4) +x^2(1+x)s(x^4) +(1+x^4)t(x^4) \\
          &=(1+x)t(x^2) + (1+x^4)t(x^4),
  \end{align*}
  i.e., the Thue-Morse power series is a solution of the $2$-Mahler equation
  $P(x,y)= xy+(1+x)Φ(y) +(1+x^4)Φ^2(y)  = 0$ where $Φ$ is the
  operator which maps each series $f(x)$ to~$f(x^2)$.
\end{example}

\subsubsection{Reformulating isolating $k$-Mahler equations}
\label{sec:reformulating-mahler}

Given an isolating $k$-Mahler equation
$P(x,y) = y - ∑_{i=1}^dA_i(x)Φ^i(y)$, we write
\begin{equation} \label{eq:coefficients}
  A_i(x) = ∑_{j=0}^h α_{i,j}x^j
  \qquad
  \text{for $1 ⩽ i ⩽ d$},
\end{equation}
where $α_{i,j}∈ R$ for each $i,j$.  Let $f(x) = ∑_{n⩾0}f_nx^n$ satisfy
$P(x,f(x))=0$.  To find the coefficients~$f_n$ of~$f$, we obtain
from~\eqref{eq:coefficients} that
\begin{align} \label{eq:condensed-recurrence-brut}
  f_n = ∑_{\begin{smallmatrix}
             \ell k^i+j=n \\
             1 ⩽ i ⩽ d, \; 0 ⩽ j ⩽ h
           \end{smallmatrix}}
           α_{i,j}f_\ell =     ∑_{
             \ell k^i+j=n 
                     }
           α_{i,j}f_\ell \, ,
\end{align}
where the last equality follows if we set $α_{i,j}=0$ for $i,j$ outside the
prescribed bounds. Therefore we can drop the superfluous constraints on the
indices.

Note that if we set $n=0$ in~\eqref{eq:condensed-recurrence-brut}, we get
the following equation satisfied by the coefficient~$f_0 = f(0)$
\begin{align} \label{eq:compatible}
   f_0=  \left(∑_{i=0}^hα_{i,0} \right) f_0.
\end{align}
Therefore, if it were to be the case that $f_0=f(0)$ for some solution~$f$
of $P(x,f(x))=0$, Equality~\eqref{eq:compatible} must hold.  In this case
we say that $f_0$ is \emph{$P$-compatible}. Conversely, if $f_0$ is
$P$-compatible, then, again using~\eqref{eq:condensed-recurrence-brut},
there is a unique series~$f$ such that $P(x,f(x))=0$ and $f(0)=f_0$.
Clearly, if $∑_{i=0}^hα_{i,0}=1$, then any $f_0 ∈ R$ is $P$-compatible.
Also, if $R$ is an integral domain, then $\bigl(∑_{i=0}^hα_{i,0}\bigr)f_0 =
f_0$ is equivalent to either $∑_{i=0}^hα_{i,0}=1$ or $f_0=0$.

In~\eqref{eq:condensed-recurrence-brut} we have reduced solving a
functional equation to a linear problem, to which weighted automata are
well suited.  The identity~\eqref{eq:condensed-recurrence-brut} motivates
the definition of the automaton associated to a Mahler equation that we
will use in Section~\ref{sec:def-automaton}, and it is also what makes the
proof of Proposition~\ref{prop:key} work.

\subsubsection{From isolating $k$-Mahler equations to weighted automata}
\label{sec:def-automaton}

In this section, we define a weighted automaton which directly computes the
coefficients of the solution of a $k$-Mahler equation. Note that we will
avoid the need for Cartier operators $\Lambda_i$, $0 ⩽ i ⩽ k-1$
(\cite[Definition~12.2.1]{Allouche-Shallit}); this is relevant, because
when we move to more general numeration systems in
Section~\ref{sec:Mahler-Zeckendorf}, the appropriate generalisation of
Cartier operators do not enjoy some of the properties that the
operators~$Λ_i$ satisfy.  We mention here that the notion of a kernel for
$U$-regular sequences has been nicely developed in~\cite{Maes-Rigo-2002, CCS}.

This automaton is given in two steps.  First we describe a \emph{universal}
weighted automaton with an infinite number of states.  The structure of
this universal automaton is fixed and does not depend on the ring~$R$;
furthermore it can accommodate any $k$-Mahler equation~$P$. Given such an
equation, which has finitely many non-zero coefficients, only finitely many
states of this universal automaton are needed to compute the solution, and
the weights of its transitions are given by the coefficients occurring
in~$P$.

Define the state set~$S$
\begin{displaymath}
  S ≔ \{ s_{i,j}: 0 ⩽ i < ∞ \text{ and } 0 ⩽ j < ∞ \}.
\end{displaymath}
Let $B = \{0,1, \dots , k-1\}$ be the input alphabet.  Let
$(α_{i,j})_{i⩾ 1, j⩾ 0}$ and $f_0$ be elements of the commutative ring~$R$,
and define the transition set~$Δ$
\begin{align*}
  Δ  ≔ {} & \left\{ s_{i,j} \trans{b:1} s_{i+1,kj+b} : b ∈  B,
              0 ⩽ i,j \right\} \\
    {} ∪ {} & \left\{ s_{i,j} \trans{b:α_{i+1,kj+b-\ell}} s_{0,\ell} : b ∈ B,
              0 ⩽ i,j,\ell  \text{ and } 0 ⩽ kj+b-\ell \right\}.
\end{align*}
We set the initial and final weights $I$ and $F$ as 
\begin{displaymath}
  I(s_{i,j}) ≔
  \begin{cases}
    f_0 \text{ if $j=0$} \\
    0 \text{ otherwise,  }
  \end{cases}
  \quad\text{and}\quad
  F(s_{i,j}) ≔
  \begin{cases}
    1 \text{ if $i = j =0$} \\
    0 \text{ otherwise. }
  \end{cases}
\end{displaymath}
Then we call the automaton $\mathcal{A}≔⟨ S,B,Δ,I,F ⟩$ the \emph{universal weighted
  $k$-automaton associated to $(α_{i,j})_{i⩾1, j⩾0}$}.   If needed, we can
specify that $\mathcal{A}$ depends on the choice of initial condition $f_0$.

We think of this weighted $k$-automaton as universal because only the edge
weights depend on $(α_{i,j})_{i⩾ 1, j⩾ 0}$.  Let $α_{i,j}$ for $i⩾ 1$ and
$j⩾ 0$ be the coefficient of $x^jΦ^i(y)$ in the expression
$∑_{i=1}^dA_i(x)Φ^i(y)$ which is viewed as a polynomial in~$x$ and~$Φ$.
The following lemma shows that if there are finitely many non-zero
coefficients~$α_{i,j}$, only finitely many states of the universal
automaton are useful.  Useful means here that they occur in a path with
non-zero weight.  Note that such a path must end in~$s_{0,0}$ since this
state is the only one with non-zero final weight. Lemma~\ref{lem:truncate}
also provides an explicit upper bound of the number of useful states.
\begin{lemma} \label{lem:truncate}
  Let $P$ be an isolating $k$-Mahler equation with exponent~$d$ and
  height~$h$.  If either $i ⩾ d$ or $j ⩾ \frac{h}{k-1}$, then the
  state~$s_{i,j}$ in the universal weighted $k$-automaton does not occur in
  a path with non-zero weight.
\end{lemma}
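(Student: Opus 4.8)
The plan is to analyze the structure of paths in the universal automaton and track how the second index~$j$ of the states~$s_{i,j}$ evolves along a path, together with the fact that the only nonzero final weight is at~$s_{0,0}$. First I would observe that a path contributing nonzero weight to a word~$w$ must end at~$s_{0,0}$, so it decomposes into \emph{segments}: maximal runs of transitions of the first type $s_{i,j}\trans{b:1}s_{i+1,qj+b}$, each terminated by a single transition of the second type $s_{i,j}\trans{b:\alpha_{i+1,qj+b-k}}s_{0,k}$ that resets the first coordinate to~$0$. Within a segment starting at $s_{0,j_0}$ and reading digits $b_1,\dots,b_m$, the states visited are $s_{0,j_0}, s_{1,qj_0+b_1}, s_{2,q(qj_0+b_1)+b_2},\dots$, so after $\ell$ first-type steps the first coordinate is exactly~$\ell$. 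Since $\alpha_{i+1,\cdot}=0$ whenever $i+1>d$, i.e.\ $i\ge d$, a second-type transition out of $s_{i,j}$ carries weight~$0$ once $i\ge d$; hence a segment that does not terminate within the first $d$ first-type steps can never be completed by a nonzero-weight reset. This immediately gives the bound $i<d$ for any state $s_{i,j}$ occurring in a nonzero-weight path: such a state lies strictly inside a segment of length at most~$d$.

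For the bound on~$j$, I would track an invariant. Claim: in any nonzero-weight path, every state $s_{i,j}$ visited satisfies $j\le h/(q-1)$ — more precisely $qj+b-k$ must be in the allowed range for the reset weight to be possibly nonzero, which forces $qj+b-k\le h$. Starting from $s_{0,0}$ at the beginning (where $j=0$), the first coordinate of the second index updates as $j\mapsto qj+b$ along first-type edges, and resets to some $k$ with $\alpha_{i+1,qj+b-k}\ne 0$, hence $0\le qj+b-k\le h$, along second-type edges. I would show by induction along the path (from the end, $s_{0,0}$, backwards, or equivalently from the start forwards using the structure of useful segments) that $j\le \lfloor h/(q-1)\rfloor$ at every state: if $j\le h/(q-1)$ then after a first-type step the new index is $qj+b$, and for the segment to be completable with nonzero weight within $d$ steps we need, at the eventual reset from $s_{i,j'}$, that $qj'+b'-k\le h$ with $k\ge 0$, so $qj'\le h$; combined with the fact that the index only grows by the map $j\mapsto qj+b$ before a reset, a geometric-series estimate shows that if the index ever exceeded $h/(q-1)$ it could never come back down into the range needed for a nonzero reset. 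Concretely, once $j> h/(q-1)$ we have $qj+b> qj > j + (q-1)\cdot\frac{h}{q-1} \ge j$, and iterating, the index stays $>h/(q-1)$ forever within the segment and the segment length is $<d$, so no nonzero reset is possible; contradiction.

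The main obstacle I anticipate is making the second-index estimate fully rigorous: unlike the first index, $j$ does not simply count steps, and the reset transitions can \emph{decrease} $j$ (to any $k$ with $0\le qj+b-k\le h$), so one has to argue that a state with large~$j$ cannot be rescued by a later reset. The key quantitative point is that within a single segment $j$ is strictly increasing under $j\mapsto qj+b$ (since $q\ge 2$), and a reset out of $s_{i,j}$ requires $qj\le qj+b = k + (qj+b-k)\le k+h$; but also $k$ is the second index of the \emph{next} state $s_{0,k}$, to which the same analysis applies. Threading this through — ideally by a single monovariant such as "the largest second index seen so far is $\le \lfloor h/(q-1)\rfloor$", proved by induction on path length using that $\alpha_{i,j}=0$ for $j>h$ — is the crux. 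Once that invariant is established, the two bounds $i\ge d$ and $j\ge h/(q-1)$ each force a zero weight somewhere along the path, completing the proof. I would also note the boundary cases $q=2$ (where $h/(q-1)=h$) and small~$h$ cause no trouble, and that the statement as phrased ("$j\ge h/(q-1)$" excludes the state) is consistent with the strict inequality obtained above.
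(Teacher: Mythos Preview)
Your eventual approach---the one-step invariant that once $j \geqslant h/(q-1)$ it stays $\geqslant h/(q-1)$ along any nonzero-weight transition---is exactly the paper's proof, and it is correct. The paper simply checks this invariant directly for each of the two transition types: for a first-type edge $j\mapsto qj+b \geqslant qj \geqslant h/(q-1)$; for a second-type edge with nonzero weight one has $qj+b-k \leqslant h$, hence $k \geqslant qj-h \geqslant q\cdot\frac{h}{q-1}-h = \frac{h}{q-1}$. Since $s_{0,0}$ has $j=0$, the invariant forbids ever reaching it, and the lemma follows. The $i$-bound is handled implicitly by the same mechanism (once $i\geqslant d$ the only nonzero outgoing edges are first-type, which increase $i$).

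Your write-up, however, buries this under unnecessary machinery and contains one genuinely wrong step. The segment decomposition is helpful for the $i$-bound but irrelevant for the $j$-bound; and your claim ``so no nonzero reset is possible; contradiction'' is false. A nonzero reset out of $s_{i,j}$ with $j > h/(q-1)$ is perfectly possible---nothing in the constraint $0 \leqslant qj+b-k \leqslant h$ bounds $j$ from above. The point is not that the reset is blocked, but that any such reset lands at $s_{0,k}$ with $k \geqslant qj-h > h/(q-1)$, so the problem persists. You do eventually say this (``$k$ is the second index of the next state $\ldots$ to which the same analysis applies''), but only after the incorrect sentence, and you never actually carry out the two-line computation that closes the loop. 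A minor further issue: your segments are defined to start at $s_{0,j_0}$, but the path's initial state may be $s_{i_0,0}$ with $i_0>0$, so the first segment need not start at first index~$0$. This does not break the $i$-bound argument but should be acknowledged.
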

\begin{proof}
  We claim that any path with a non-zero weight starting from a state
  $s_{i,j}$ with either $i ⩾ d$ or $j ⩾ h/(k-1)$ ends in a state
  $s_{i',j'}$ with either $i' ⩾ d$ or $j' ⩾ h/(k-1)$.  The claim proves the
  statement of the lemma because $s_{0,0}$ is the only state with a
  non-zero final weight and this state cannot be reached from~$s_{i,j}$
  with a non-zero weight path.

  It suffices, of course, to prove the claim when the path is of
  length~$1$, that is, equals a transition $s_{i,j} \trans{b:α} s_{i',j'}$
  in the universal weighted $k$-automaton associated to
  $(α_{i,j})_{i⩾ 1, j⩾ 0}$ where $α_{i,j}=0$ if either $i ⩾ d+1$ or $j⩾ h+1$.

  Let us first suppose that $i ⩾ d$.  The transition $s_{i,j} \trans{b:1}
  s_{i+1,kj+b}$ satisfies the claim because $i+1 ⩾ d$.  The weight
  $α_{i+1,kj+b-ℓ}$ of the transition $s_{i,j} \trans{b:α_{i+1,kj+b-ℓ}}
  s_{0,ℓ}$ is zero for the same reason.

  Let us suppose now that $j ⩾ h/(k-1)$.  The transition $s_{i,j}
  \trans{b:1} s_{i+1,kj+b}$ satisfies the claim because $kj+b ⩾ j ⩾
  h(k-1)$.  Finally suppose $s_{i,j} \trans{b:α_{i+1,kj+b-ℓ}} s_{0,ℓ}$.
  Either $kj+b-ℓ ⩾ h+1$ and the weight $α_{i+1,kj+b-ℓ}$ is zero, or $kj+b-ℓ
  ⩽ h$ and $ℓ$ satisfies $ℓ > kj-h ⩾ kh/(k-1) -h = h/(k-1)$, and this
  completes the proof.
\end{proof}

In view of Lemma~\ref{lem:truncate}, and because $s_{0,0}$ is the only
state with a non-zero final weight, we define
the  weighted automaton associated to a  $k$-Mahler equation~$P$ and
initial condition~$f_0$ as follows.

Let $k ⩾ 2$ be a natural number. Let
$P(x,y) = y - ∑_{i=1}^dA_i(x)y^{k^i}$ be an isolating $k$-Mahler
equation whose coefficients $A_i(x) ∈ R[x]$ are as
in~\eqref{eq:coefficients}.  Set $\tilde{h} ≔ ⌈\frac{h}{k-1}⌉-1$.  If we
truncate the universal weighted $k$-automaton associated to
$(α_{i,j})_{i ⩾ 1, j ⩾ 0}$ and $f_0$ to the state and transition sets
\begin{displaymath}
  S ≔ \{  s_{i,j}: 0 ⩽ i ⩽ d-1 \text{ and } 0 ⩽ j ⩽\tilde{h}  \}
\end{displaymath}
and
\begin{align*}
  Δ  ≔ {} & \left\{ s_{i,j} \trans{b:1} s_{i+1,kj+b} : b ∈ B
            \begin{array}{c}
              0 ⩽ i ⩽ d-2 \\
              0 ⩽ kj+b ⩽ \tilde{h}
            \end{array} \right\} \\
    {} \cup {} & \left\{ s_{i,j} \trans{b:α_{i+1,kj+b-\ell}} s_{0,\ell} : b ∈ B
              \begin{array}{c}
                0 ⩽ i ⩽ d-1 \\
                0 ⩽ j,\ell ⩽ \tilde{h} \\
                0 ⩽ kj+b-\ell ⩽ h
              \end{array} \right\},
\end{align*}
the automaton $\mathcal{A} ≔ ⟨S,B,Δ,I,F⟩$ is called the \emph{weighted
  automaton associated to $P$ and~$f_0$}, so
$\mathcal{A} = \mathcal{A}(P,f_0)$.

In other words, the weighted automaton associated to~$P$ is a restriction
of the universal $k$-automaton to a subgraph which will contain all paths
in the universal automaton that have non-zero weight and end at~$s_{0,0}$.
Because of this, we will frequently relax some of the constraints on the
indices. Note also that while this definition does not require $f_0$ to be
$P$-compatible, the latter will be a condition for all our results.

\begin{example} \normalfont In Figure~\ref{fig:weighted1} we give the
  weighted automaton associated to the $2$-Mahler equation
  $f(x) = A_1(x)f(x^2)$ where $A_1(x)$ is the polynomial of degree~$3$
  $α_{1,0}+α_{1,1}x+α_{1,2}x^2+α_{1,3}x^3$. We will see in
  Proposition~\ref{prop:key} that setting $α_{1,0} = 1$ and $f_0$ in the
  ring~$R$, this automaton generates the unique solution
  $f(x) = ∑_{n ⩾ 0}f_nx^n ∈ R⟦ x⟧$ of the equation $f(x) = A_1(x)f(x^2)$
  such that $f(0) = f_0$.
  \begin{figure}[htbp]
  \begin{center}
  \begin{tikzpicture}[->,>=stealth',semithick,auto,inner sep=1.2pt]
  \tikzstyle{every state}=[minimum size=0.4]
  \node[state] (q00) at (0,0) {$s_{0,0}$};
  \node (q00-in) at (-1,0) {} ;
  \node (q00-out) at (0,-1) {} ;
  \node[state] (q01) at (3,0) {$s_{0,1}$};
  \node[state] (q02) at (6,0) {$s_{0,2}$};
  \path (q00-in) edge node {$\tcr{f_0}$} (q00);
  \path (q00) edge node {$\tcr{1}$} (q00-out);
  \path (q00) edge[out=120,in=60,loop] node {$\begin{array}{c}
                                                \tcb{0}{:}\tcr{α_{1,0}} \\
                                                \tcb{1}{:}\tcr{α_{1,1}}
                                              \end{array}$} (q00);
  \path (q00) edge[bend left=15] node {$\tcb{1}{:}\tcr{α_{1,0}}$} (q01) ;
  \path (q01) edge[bend left=15] node {$\begin{array}{c}
                                          \tcb{0}{:}\tcr{α_{1,2}} \\
                                          \tcb{1}{:}\tcr{α_{1,3}}
                                        \end{array}$} (q00) ;
  \path (q01) edge[out=120,in=60,loop] node {$\begin{array}{c} 
                                                \tcb{0}{:}\tcr{α_{1,1}} \\
                                                \tcb{1}{:}\tcr{α_{1,2}}
                                              \end{array}$} (q01);
  \path (q01) edge[bend left=15] node {$\begin{array}{c}
                                          \tcb{0}{:}\tcr{α_{1,0}} \\
                                          \tcb{1}{:}\tcr{α_{1,1}}
                                        \end{array}$} (q02) ;
  \path (q02) edge[bend left=15] node {$\tcb{0}{:}\tcr{α_{1,3}}$} (q01) ;
  \path (q02) edge[out=120,in=60,loop] node {$\begin{array}{c} 
                                                \tcb{0}{:}\tcr{α_{1,2}} \\
                                                \tcb{1}{:}\tcr{α_{1,3}}
                                              \end{array}$} (q01);
  \end{tikzpicture}
  \end{center}
  \caption{The weighted automaton for $f(x) = (α_{1,0}+α_{1,1}x+α_{1,2}x^2+α_{1,3}x^3)f(x^2)$.}
  \label{fig:weighted1} 
\end{figure}
\end{example}
 
\begin{example}\normalfont
  Figure~\ref{fig:weighted-2-3}  
  depicts the weighted automaton associated to a
  $2$-Mahler equation of exponent~$d=2$ and height~$h=3$, that is, the equation
  \begin{displaymath}
    f(x) = A_1(x)f(x^2) + A_2(x)f(x^4)
  \end{displaymath}
  where $A_1(x) = α_{1,0}+α_{1,1}x+α_{1,2}x^2+α_{1,3}x^3$ and
  $A_2(x) = α_{2,0}+α_{2,1}x+α_{2,2}x^2+α_{2,3}x^3$. Note how
  Figure~\ref{fig:weighted1} is a subgraph of
  Figure~\ref{fig:weighted-2-3}; consistent with the fact that they are
  both truncations of the universal weighted $2$-automaton.
\begin{figure}[htbp]
  \begin{center}
  \begin{tikzpicture}[->,>=stealth',semithick,auto,inner sep=1.2pt]
  \tikzstyle{every state}=[minimum size=0.4]
  \node[state] (q00) at (0,4) {$s_{0,0}$};
  \node (q00-in) at (-1,4) {} ;
  \node (q00-out) at (0.7,4.7){} ;
  \node[state] (q01) at (4,4) {$s_{0,1}$};
  \node[state] (q02) at (8,4) {$s_{0,2}$};
  \node[state] (q10) at (0,0) {$s_{1,0}$};
  \node (q10-in) at (-1,0) {} ;
  \node[state] (q11) at (4,0) {$s_{1,1}$};
  \node[state] (q12) at (8,0) {$s_{1,2}$};

  \path (q00-in) edge node {$\tcr{f_0}$} (q00);
  \path (q10-in) edge node {$\tcr{f_0}$} (q10);
  \path (q00) edge node {$\tcr{1}$} (q00-out);
  \path (q00) edge[out=140,in=80,loop] node {$\begin{array}{c}
                                                \tcb{0}{:}\tcr{α_{1,0}} \\
                                                \tcb{1}{:}\tcr{α_{1,1}}
                                              \end{array}$} (q00);
  \path (q00) edge[bend left=10] node {$\tcb{1}{:}\tcr{α_{1,0}}$} (q01);
  \path (q00) edge[bend left=10] node {$\tcb{0}{:}\tcr{1}$} (q10);
  \path (q00) edge[bend left=10] node[pos=0.75] {$\tcb{1}{:}\tcr{1}$} (q11);
  \path (q01) edge[bend left=10] node {$\begin{array}{c}
                                          \tcb{0}{:}\tcr{α_{1,2}} \\
                                          \tcb{1}{:}\tcr{α_{1,3}}
                                        \end{array}$} (q00);
  \path (q01) edge[out=120,in=60,loop] node {$\begin{array}{c} 
                                                \tcb{0}{:}\tcr{α_{1,1}} \\
                                                \tcb{1}{:}\tcr{α_{1,2}}
                                              \end{array}$} (q01);
  \path (q01) edge[bend left=10] node {$\begin{array}{c}
                                          \tcb{0}{:}\tcr{α_{1,0}} \\
                                          \tcb{1}{:}\tcr{α_{1,1}}
                                        \end{array}$} (q02);
  \path (q01) edge[bend left=10] node[pos=0.75] {$\tcb{0}{:}\tcr{1}$} (q12);
  \path (q02) edge[bend left=10] node {$\tcb{0}{:}\tcr{α_{1,3}}$} (q01);
  \path (q02) edge[out=120,in=60,loop] node {$\begin{array}{c} 
                                                \tcb{0}{:}\tcr{α_{1,2}} \\
                                                \tcb{1}{:}\tcr{α_{1,3}}
                                              \end{array}$} (q01);
  \path (q10) edge[bend left=10] node {$\begin{array}{c}
                                          \tcb{0}{:}\tcr{α_{2,0}} \\
                                          \tcb{1}{:}\tcr{α_{2,1}}
                                        \end{array}$} (q00);
  \path (q10) edge node[pos=0.75,swap] {$\tcb{1}{:}\tcr{α_{2,0}}$} (q01);
  \path (q11) edge[bend left=10] node[pos=0.25,xshift=6pt] {$\begin{array}{c}
                                                               \tcb{0}{:}\tcr{α_{2,2}} \\
                                                               \tcb{1}{:}\tcr{α_{2,3}}
                                                             \end{array}$} (q00);
  \path (q11) edge node[swap,xshift=-4pt] {$\begin{array}{c}
                                              \tcb{0}{:}\tcr{α_{2,1}} \\
                                              \tcb{1}{:}\tcr{α_{2,2}}
                                            \end{array}$} (q01);
  \path (q11) edge node[swap,pos=0.21,xshift=-4pt] {$\begin{array}{c}
                                          \tcb{0}{:}\tcr{α_{2,0}} \\
                                          \tcb{1}{:}\tcr{α_{2,1}}
                                        \end{array}$} (q02);
  \path (q12) edge[bend left=10] node[pos=0.15,xshift=4pt] {$\tcb{0}{:}\tcr{α_{2,3}}$} (q01);
  \path (q12) edge node[swap] {$\begin{array}{c}
                                  \tcb{0}{:}\tcr{α_{2,2}} \\
                                  \tcb{1}{:}\tcr{α_{2,3}}
                                \end{array}$} (q02);
  \end{tikzpicture}
  \end{center}
  \caption{The automaton for a 2-Mahler equation of exponent 2 and height 3.}
  \label{fig:weighted-2-3} 
\end{figure}
\end{example}

\begin{remark} \label{remark:leading-zero}\normalfont
  The aim is to generate a sequence $(f_n)_{n ⩾ 0}$ by feeding $(n)_k$ into
  a weighted $k$-automaton. We claim that the automaton associated to~$P$
  will generate the same sequence even if we allow leading zeros
  in~$(n)_k$. It is sufficient to show that $Iμ(0)= I$. The only states
  with non-zero initial weight are $s_{i,0}$. The transitions between these
  states with $b=0$ are $s_{i,0} \trans{0:α_{i+1,0}} s_{0,0}$ and
  $s_{i,0} \trans{0:1} s_{i+1,0}$ for $0 ⩽ i ⩽ d-1$. Thus
  \begin{displaymath}
    I\mu(0)= (\overbrace{f_0, \dots f_0}^{d } | 0 … 0)  
    \begin{pmatrix}
      \begin{array}{ccccc}
      α_{1,0} & 1 & 0 & \dots & 0 \\ 
      α_{2,0} & 0 & 1 & \dots & 0 \\
      \vdots \\
      α_{d-1,0} & 0 & 0 & \dots &1\\  
      α_{d,0} & 0 & 0 & \dots & 0\\  
      \end{array}  & \rvline & \mathbf{0} \\ \hline
      M_2
    \end{pmatrix}  = I,
  \end{displaymath}
  where $\mathbf{0}$ represents the appropriate dimension matrix all of
  whose entries are zero, and where we have used
  Property~\eqref{eq:compatible}, i.e., that
  $\left(∑_{i=1}^{d} α _{i,0}\right)f_0=f_0$.
\end{remark} 
Given a state $s$ in the automaton $\mathcal{A}(P,f_0)$, define 
\begin{displaymath}
  \weight_{\mathcal{A},s}^*(w) = ∑_{s_0 \trans{w:r} s}I(s_0) r,
\end{displaymath}
where the sum is taken over all possible paths that start at~$s_0$, are
labelled~$w$, and end at~$s$.  The following proposition states the main
property of the automaton associated to a given Mahler equation, showing
that it indeed computes the solution of the equation.

\begin{proposition} \label{prop:key}
  Let $R$ be a commutative ring, let $P(x,y) ∈ R[x,y]$ be an isolating
  $k$-Mahler equation of exponent~$d$ and height~$h$, and let
  $f = ∑_{n ⩾ 0}f_n x^n$ be a solution of $P(x,f(x))=0$.  If $\mathcal{A}$
  is the weighted automaton associated to $P(x,y)$ and $f_0 ∈ R$, if
  $w ∈ \{0,… , k-1\}^*$, and if $i$ and~$j$ are integers with $0 ⩽ i ⩽ d-1$
  and $0 ⩽ j ⩽ ⌈\frac{h}{k-1}⌉-1$, then
  \begin{displaymath}
    \weight_{\mathcal{A},s_{i,j}}^{*}(w) =
    \begin{cases}
      f_\ell  & \text{if $\ell k^i + j = [w]_k$} \\
      0    & \text{otherwise.}
    \end{cases}
  \end{displaymath}
\end{proposition}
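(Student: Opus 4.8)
The plan is to prove the identity by induction on the length of the word $w$. Write $w = w'b$ with $b \in \{0,\dots,q-1\}$ the last (least significant) digit read, so that $[w]_q = q[w']_q + b$. The inductive hypothesis gives the value of $\weight^*_{\mathcal A, s_{i',j'}}(w')$ at every state $s_{i',j'}$ in the truncated automaton, and the recursion $\weight^*_{\mathcal A, s}(w) = \sum_{s' \trans{b:r} s} \weight^*_{\mathcal A, s'}(w') \, r$ lets us compute the new weights in terms of the old ones. The base case is $w = \varepsilon$: then $\weight^*_{\mathcal A, s_{i,j}}(\varepsilon) = I(s_{i,j})$, which is $f_0$ if $j = 0$ and $0$ otherwise; since $[\varepsilon]_q = 0$, this matches the claimed formula (with $k = 0$, $i$ arbitrary, $j = 0$), using that $f_0$ is $P$-compatible so that $\weight^*$ at $s_{0,0}$ and at $s_{i,0}$ agree — this is exactly the content of Remark~\ref{remark:leading-zero}. (I should note the hypothesis that $f_0$ is $P$-compatible is needed and flag that all results assume it, as the paper does.)

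For the inductive step, fix a target state $s_{i,j}$ and examine which transitions enter it. There are two cases according to the two families of transitions in $\Delta$. First, transitions of the form $s_{i-1,j'} \trans{b:1} s_{i,j}$ exist exactly when $i \geq 1$ and $qj' + b = j$, i.e.\ $j' = (j-b)/q$; such a $j'$ exists iff $q \mid (j-b)$, and then the contribution is $\weight^*_{\mathcal A, s_{i-1,(j-b)/q}}(w')$. By induction this equals $f_{k'}$ if $k' q^{i-1} + (j-b)/q = [w']_q$ and $0$ otherwise; multiplying the index equation by $q$ and adding $b$, this says $k' q^i + j = q[w']_q + b = [w]_q$, so the contribution is $f_k$ with $k = k'$ precisely when $k q^i + j = [w]_q$. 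For $i = 0$ there are no such incoming transitions. Second, transitions of the form $s_{i',j'} \trans{b:\alpha_{i'+1,\,qj'+b-j}} s_{0,j}$ land at $s_{0,j}$ (so this case is relevant only when the target has first index $0$); here $i'$ ranges over $0,\dots,d-1$ and $j'$ over the allowed range, with $\alpha_{i'+1,qj'+b-j}$ possibly zero. Summing over all of them and invoking the inductive hypothesis, the contribution to $\weight^*_{\mathcal A, s_{0,j}}(w)$ is
\begin{displaymath}
  \sum_{i',j'} \alpha_{i'+1,\,qj'+b-j} \, \weight^*_{\mathcal A, s_{i',j'}}(w')
  = \sum_{i'} \alpha_{i'+1,\, q[w']_q + b - j} \, f_{[w']_q}
  \cdot \big[\text{valid}\big],
\end{displaymath}
where I have used that $\weight^*_{\mathcal A, s_{i',j'}}(w')$ is nonzero only for the single pair with $j' = ([w']_q \bmod \text{something})$ forcing $k' q^{i'} + j' = [w']_q$; writing $\ell := q[w']_q + b - j = [w]_q - j$ and re-indexing, this sum becomes $\sum_{i' \geq 1} \alpha_{i',\ell'} f_{k}$ over the decompositions $k q^{i'} + \ell' = [w]_q$ matching the shape of \eqref{eq:condensed-recurrence-brut}.

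The crux is then to combine the two cases when $i = 0$: the total weight $\weight^*_{\mathcal A, s_{0,j}}(w)$ is the sum of the second-family contribution above, and I must show it equals $f_k$ when $k q^0 + j = [w]_q$, i.e.\ $k = [w]_q - j$, and $0$ otherwise. When $j$ is such that $k := [w]_q - j \geq 0$, the second-family sum is exactly $\sum_{k' q^{i'} + j'' = k} \alpha_{i',j''} f_{k'}$, which by \eqref{eq:condensed-recurrence-brut} is $f_k$ — this is where the whole construction pays off, and it is the step I expect to require the most bookkeeping, namely checking that the index ranges in the truncated automaton (controlled by $\tilde h = \lceil h/(q-1)\rceil - 1$ via Lemma~\ref{lem:truncate}) are wide enough that no nonzero term of the recurrence is omitted, and that the constraint $0 \leq qj' + b - j \leq h$ in $\Delta$ exactly matches the support $0 \leq j'' \leq h$ of the $\alpha$'s. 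When no valid $k \geq 0$ exists (e.g.\ $j > [w]_q$), every term has a negative or out-of-range second subscript on $\alpha$, hence vanishes, giving $0$ as required. For the first-family case with $i \geq 1$, the argument of the previous paragraph already gives the answer directly, with no recurrence needed. Assembling these yields the proposition; taking $i = j = 0$ and summing the final weights will then recover Theorem~\ref{thm:main-thm-1} as a corollary, since $\weight_{\mathcal A}(w) = \weight^*_{\mathcal A, s_{0,0}}(w) = f_{[w]_q}$.
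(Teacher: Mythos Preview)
Your approach is the same as the paper's: induction on $|w|$, with the inductive step split into the case $i \geqslant 1$ (a single incoming first-family transition, handled by multiplying the index relation $q^{i-1}k+j'=[w']_q$ by~$q$ and adding~$b$) and the case $i=0$ (summing over all second-family transitions and re-indexing so that \eqref{eq:condensed-recurrence-brut} applies). The base case and both inductive cases are exactly as in the paper.

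Two presentational points. First, the base case does not need $P$-compatibility or Remark~\ref{remark:leading-zero}: the definition of~$I$ gives $I(s_{i,0})=f_0$ for every~$i$, and $[\varepsilon]_q=0$ forces $k=0$ in $kq^i+j=0$, so there is nothing to reconcile. Second, your displayed formula for the $i=0$ case is not right as written: it is \emph{not} true that for each $i'$ there is a single contributing~$j'$, since $j'$ ranges over $\{0,\dots,\tilde h\}$ and several of these may satisfy $q^{i'}\mid([w']_q-j')$. The correct intermediate expression (which the paper writes, and which your surrounding prose arrives at) is
\[
  \weight^*_{\mathcal A,s_{0,j}}(wb)
  \;=\;\sum_{i',j'} \alpha_{i'+1,\,qj'+b-j}\,\weight^*_{\mathcal A,s_{i',j'}}(w')
  \;=\;\sum_{q^{i'}k+j'=[w']_q} \alpha_{i'+1,\,qj'+b-j}\,f_k,
\]
and then the substitution $i=i'+1$, $\ell=qj'+b-j$ turns the constraint $q^{i'}k+j'=[w']_q$ into $q^ik+\ell=[wb]_q-j$, whence \eqref{eq:condensed-recurrence-brut} gives $f_{[wb]_q-j}$. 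Your worry about truncation is handled by Lemma~\ref{lem:truncate}: any term of the recurrence with $\alpha_{i,\ell}\neq 0$ lives inside the truncated state set, so nothing is lost.
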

\begin{proof}
  The proof is by induction on the length of the word~$w$.  If $w$ is the
  empty word~$ε$, then $ \weight_{\mathcal{A},s_{i,j}}^*(ε) = I(s_{i,j})$; this
  equals~$f_0$ if $j= 0$ and equals~$0$ otherwise, so the claim is proved
  for $w = ε$ since $[ε]_k = 0$.
  
  For $|w| > 0$, we consider first the easier case $i > 0$ and then the
  case $i = 0$.

  Let $i > 0$. In this case there is a unique incoming transition
  $s_{i-1,j'} \trans{b:1} s_{i,j}$ to~$s_{i,j}$ if $j ≡ b \mod k$ and
  $kj'+b = j$, and no incoming transition otherwise. In the latter case the
  statement follows trivially, so in what follows we assume that there is an
  incoming transition. Using the inductive hypothesis,
  \begin{align*} 
     \weight_{\mathcal{A},s_{i,j}}^{*}(wb)  =   \weight_{\mathcal{A},s_{i-1,j'}}^{*}(w) 
                                = f_\ell,  
  \end{align*}
  where $k^{i-1}\ell + j' = [w]_k$.
  By multiplying $k^{i-1}\ell + j' = [w]_k$ by~$k$ and then adding~$b$, we
  obtain $k^i\ell + j = k[w]_k + b = [wb]_k$ and the statement of the
  proposition is proved in this case. Notice here that we use linearity of
  the map $m ↦ km$.
             
  Next we consider the case $i=0$.  We have
 \begin{align*}
   \weight_{\mathcal{A},s_{0,j}}^*(wb)
     & = ∑_{i',j'}α_{i'+1,k j'+b-j}\weight_{\mathcal{A},s_{i',j'}}^*(w) \\
     & = ∑_{(i',j',\ell):\,k^{i'}\ell + j' = [w]_k}α_{i'+1,kj'+b-j} f_\ell,
  \end{align*}
  where the first equality follows by the definition of the allowed
  transitions in~$\mathcal{A}$ and the second equality follows by the inductive
  hypothesis. Setting $i''=i'+1$ and $ℓ' = kj'+b-j$, the equation
  $k^{i'}\ell + j' = [w]_k$ becomes $k^{i''} \ell + ℓ' = [wb]_k-j$, and  we have
  \begin{align*}
  \weight_{\mathcal{A},s_{0,j}}^*(wb) & =   ∑_{k^{i''} ℓ + ℓ' = [wb]_k-j}α_{i'' ,ℓ'}f_\ell \\
                           & = f_{[wb]_k-j},
  \end{align*}
  where the last equality follows from \eqref{eq:condensed-recurrence-brut}. 
\end{proof}

We can now show that the weighted automaton associated to a Mahler equation
generates the desired solution.
\begin{theorem} \label{thm:main}
  Let $R$ be a commutative ring, and let $P(x,y)∈ R[x,y]$ be an isolating
  $k$-Mahler equation of exponent~$d$ and height~$h$.  Then, for any
  $w\in \{0, \ldots , k-1\}^*$ the automaton~$\mathcal{A}$ associated to
  $P$ and~$f_0$ satisfies
  \begin{displaymath}
    \weight_{\mathcal{A}}(w) = f_{[w]_k},
  \end{displaymath}
  where $f = ∑_{n ⩾ 0}f_nx^n$ is a solution of $P(x,f(x))=0$.
  
  Consequently if $R$ is finite, then there exists a deterministic
  automaton with at most $|R|^{ \left⌈\frac{h}{k-1}\right⌉ d}$ states that
  generates~$f(x)$.
\end{theorem}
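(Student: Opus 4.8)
The plan is to read the first identity straight off Proposition~\ref{prop:key}, and then to convert the resulting weighted automaton into a deterministic one via Proposition~\ref{prop:weighted-to-automatic} in order to obtain the state bound. For the identity $\weight_{\mathcal{A}}(w) = f_{[w]_q}$: by definition $\weight_{\mathcal{A}}(w) = \sum_{\gamma = s \trans{w:r} s'} I(s)\,r\,F(s')$, and in $\mathcal{A} = \mathcal{A}(P,f_0)$ the final weight $F$ is supported on the single state $s_{0,0}$, where it equals $1$. Hence $\weight_{\mathcal{A}}(w) = \sum_{\gamma = s \trans{w:r} s_{0,0}} I(s)\,r = \weight_{\mathcal{A},s_{0,0}}^{*}(w)$. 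Applying Proposition~\ref{prop:key} with $i = j = 0$ and observing that $kq^{0} + 0 = [w]_q$ holds precisely for $k = [w]_q$, one gets $\weight_{\mathcal{A},s_{0,0}}^{*}(w) = f_{[w]_q}$, which is the claim. This is valid for an arbitrary word $w$, leading zeros included, so it genuinely determines the sequence generated by $\mathcal{A}$; the irrelevance of leading zeros is the content of Remark~\ref{remark:leading-zero}.

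For the last sentence, assume $R$ is finite. First I would count the states of $\mathcal{A}(P,f_0)$: the state set is $S = \{s_{i,j} : 0 \le i \le d-1,\ 0 \le j \le \tilde{h}\}$ with $\tilde{h} = \lceil \tfrac{h}{q-1}\rceil - 1$, so $|S| = d(\tilde{h}+1) = d\lceil \tfrac{h}{q-1}\rceil$. Now apply Proposition~\ref{prop:weighted-to-automatic} to $\mathcal{A}$: since $R$ is a finite commutative ring, there is a deterministic $q$-automaton $\mathcal{B}$ with initial state $s_0$ and output $\tau$ such that $\tau(\Delta(s_0,w)) = \weight_{\mathcal{A}}(w) = f_{[w]_q}$ for every $w$, i.e., $\mathcal{B}$ generates $f$. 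The construction there takes the states of $\mathcal{B}$ to be row vectors of dimension $1 \times |S|$ over $R$, of which there are $|R|^{|S|} = |R|^{d\lceil h/(q-1)\rceil}$, and keeping only the reachable ones yields the stated bound.

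I do not expect a genuine obstacle: the inductive heart of the argument already resides in Proposition~\ref{prop:key}, and what is left is bookkeeping. The two points to keep honest are that $f_0$ is assumed $P$-compatible --- equivalently $f_0 = f(0)$ for the solution $f$ under consideration --- which is exactly what allows Proposition~\ref{prop:key} to be invoked for this particular $f$, and that the truncation to the finite set $S$ loses no path of non-zero weight ending at $s_{0,0}$, which is precisely why Lemma~\ref{lem:truncate} was established. With those acknowledged, the theorem follows in a couple of lines.
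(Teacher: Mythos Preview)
Your proof is correct and follows essentially the same approach as the paper: both reduce the first claim to Proposition~\ref{prop:key} applied at the state $s_{0,0}$ (the unique state with non-zero final weight), and both obtain the state bound by determinising the $d\lceil h/(q-1)\rceil$-state weighted automaton via Proposition~\ref{prop:weighted-to-automatic}. Your write-up is in fact slightly more explicit in flagging the $P$-compatibility of $f_0$ and the role of Lemma~\ref{lem:truncate}, but the argument is the same.
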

\begin{proof}
  Given $P(x,y) = y- ∑_{i=1}^dA_i(x)y^{k^i}$, with $A_i$ defined by
  Equation~\eqref{eq:coefficients}, let $\mathcal{A}$ be the weighted
  automaton associated to $P$; by definition it has at most
  $⌈\frac{h}{k-1}⌉ d$ states. Let $w$ be any word with $[w]_k=n$. By
  Remark~\ref{remark:leading-zero}, $f_{[w]_k}=f_n$, so it suffices to
  check the statement for $w= (n)_k$.  By Proposition~\ref{prop:key}, we
  have $\weight_{\mathcal{A},s_{0,0}}^{*}(w)= f_n$. As $s_{0,0}$ is the
  only state with a non-zero final weight, and
  $\weight_{\mathcal{A},q}^{*}(w)=(Iμ(w))_q$, we have
  \begin{displaymath}
    \weight_{\mathcal{A}}(w)= Iμ(w)F = \weight_{\mathcal{A},s_{0,0}}^{*}(w)=f_n.
  \end{displaymath}
  The second statement follows by determinising, either in reverse or in
  direct reading, the automaton~$\mathcal{A}$, as in
  Proposition~\ref{prop:weighted-to-automatic}.
\end{proof}

For the case of automatic sequences over $R=\mathbb{F}_q$, with $q=p^k$
where $p$ is prime, our approach gives a novel proof of one implication in
Christol's theorem, and a construction of a deterministic automaton
generating $(f_n)_{n ⩾ 0}$, without the use of Cartier operators.
\begin{corollary}\label{cor:Christol}
  Let $R$ be the finite field~$\mathbb{F}_q$. If
  $∑_n f_n x^n ∈ \mathbb{F}_q⟦ x⟧$ is algebraic over $\mathbb{F}_q(x)$,
  then there is a deterministic automaton that generates $(f_n)_{n ⩾ 0}$.
\end{corollary}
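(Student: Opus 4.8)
The plan is to deduce Corollary~\ref{cor:Christol} from Theorem~\ref{thm:main} by first converting the hypothesis ``$f=\sum_n f_nx^n$ is algebraic over $\mathbb{F}_q(x)$'' into the statement ``$f$ satisfies an isolating $q$-Mahler equation over $\mathbb{F}_q$''. Once that is in hand, Theorem~\ref{thm:main} immediately produces a finite deterministic automaton (since $R=\mathbb{F}_q$ is finite) generating $(f_n)_{n\geq0}$, and we are done.

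The first step is the classical observation, recalled in the introduction, that over $\mathbb{F}_q$ with $q=p^k$ an algebraic series is in fact a root of an \emph{Ore} polynomial $\sum_{i=0}^{d}A_i(x)y^{q^i}$ with $A_i(x)\in\mathbb{F}_q[x]$; this is because the Frobenius $x\mapsto x^q$ fixes $\mathbb{F}_q$ pointwise, so the $\mathbb{F}_q(x)$-vector space spanned by $f,f^q,f^{q^2},\dots$ (which is finite-dimensional since $f$ is algebraic) yields a nontrivial $\mathbb{F}_q(x)$-linear relation among the $f^{q^i}$, and clearing denominators gives such an Ore polynomial. Since $\Phi^i(f)=f(x^{q^i})=f(x)^{q^i}$ in characteristic $p$, this is exactly a $q$-Mahler relation $\sum_{i=0}^{d}A_i(x)\Phi^i(f)=0$. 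The second step is to arrange that this equation is \emph{isolating}, i.e.\ that $A_0\equiv 1$. This is the point requiring care: a priori $A_0$ is just some nonzero polynomial. The standard trick is to apply $\Phi$ repeatedly and take an $\mathbb{F}_q(x)$-linear combination to eliminate the lowest-order term, or more simply, to use that in the field of fractions one may invert $A_0(x)$; but one must return to a \emph{polynomial} isolating equation. I would instead argue as follows: among all nonzero $q$-Mahler equations satisfied by $f$, pick one of minimal exponent $d$; then, working in the Ore-polynomial ring, left-multiply by a suitable power of $x$ and manipulate so that, after possibly replacing $f$ by $x^{-m}f$ or reindexing, the coefficient of $\Phi^0$ becomes a unit, hence can be normalised to $1$ by multiplying through by its inverse in $\mathbb{F}_q$. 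Alternatively — and this is cleaner — one invokes the known fact (e.g.\ from the theory underlying Becker's theorem) that an algebraic series over $\mathbb{F}_q$ satisfies an isolating $q$-Mahler equation: this is precisely the ``algebraic $\Rightarrow$ Mahler'' direction, which for $\mathbb{F}_q$ can be extracted from the construction of the kernel of $f$ being finite, each kernel element being algebraic, and assembling the $\mathbb{F}_q$-linear recurrences among Cartier sections into an isolating equation.

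With an isolating $q$-Mahler equation $P$ satisfied by $f$ in hand, we fix $f_0=f(0)$, which is automatically $P$-compatible, and form the associated weighted automaton $\mathcal{A}=\mathcal{A}(P,f_0)$ of Theorem~\ref{thm:main}. By that theorem, $\weight_{\mathcal{A}}(w)=f_{[w]_q}$ for every word $w$, and since $R=\mathbb{F}_q$ is finite, determinising $\mathcal{A}$ as in Proposition~\ref{prop:weighted-to-automatic} yields a deterministic automaton with output (in direct or reverse reading) that, on input $(n)_q$, outputs $f_n$. Hence $(f_n)_{n\geq0}$ is $q$-automatic, which is the assertion of the corollary.

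The main obstacle is entirely in the reduction from ``algebraic'' to ``isolating $q$-Mahler'': ensuring $A_0\equiv 1$ (not merely $A_0$ a nonzero constant, and certainly not a nonconstant polynomial) may force us to pass to a shifted series $x^{-\nu}f$ and argue that this does not affect automaticity, or to appeal to the finiteness of the $q$-kernel of $f$ (Christol's theorem in the automatic-to-kernel direction is not what we want, but the structural fact that each kernel element is algebraic of bounded degree, together with linear algebra over $\mathbb{F}_q(x)$, does the job). Everything after that is a direct citation of Theorem~\ref{thm:main} and Proposition~\ref{prop:weighted-to-automatic}.
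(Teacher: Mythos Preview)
Your overall architecture is right---reduce to an isolating $q$-Mahler equation and then invoke Theorem~\ref{thm:main} and Proposition~\ref{prop:weighted-to-automatic}---but the reduction you propose is where the argument breaks down. You correctly identify this as the main obstacle, yet none of the routes you sketch actually closes the gap. Left-multiplying an Ore relation by a power of~$x$, or replacing $f$ by $x^{-m}f$, does not turn a nonconstant $A_0(x)$ into a unit. Appealing to ``the theory underlying Becker's theorem'' is not a proof, and Becker's direction runs the other way (isolating $\Rightarrow$ regular). Finally, assembling an isolating equation from the finiteness of the $q$-kernel uses precisely the Cartier-operator machinery that this paper is explicitly set up to avoid, and in any case amounts to reproving the algebraic-to-automatic direction of Christol by the standard route rather than via Theorem~\ref{thm:main}.

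The paper sidesteps the obstacle entirely by \emph{not} seeking an isolating equation for $f$ itself. Starting from an Ore relation $\sum_{i=0}^d A_i(x)f^{q^i}=0$ with $A_0\neq 0$, it passes to $g \coloneqq f/A_0$, which satisfies the isolating Ore equation $\sum_{i=0}^d B_i(x)g^{q^i}=0$ with $B_0=1$ and $B_i=A_iA_0^{q^i-2}$ for $i\geq 1$ (just substitute $f=A_0g$ and divide through by $A_0^2$; note $q^i-2\geq 0$ for $i\geq 1$, so the $B_i$ are polynomials). Theorem~\ref{thm:main} then gives a weighted automaton for~$g$, and since $f=A_0\cdot g$ is a Cauchy product of a polynomial with~$g$, Theorem~\ref{thm:Cauchy-product} produces a weighted automaton for~$f$, which Proposition~\ref{prop:weighted-to-automatic} determinises. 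The missing ingredient in your proposal is exactly this $g=f/A_0$ trick together with the Cauchy-product closure; once you have those, the proof is two lines.
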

\begin{proof}
  If $∑_n f_n x^n ∈ \mathbb{F}_q ⟦x⟧$ is algebraic, then standard
  techniques, eg, \cite[Lemma 12.2.3]{Allouche-Shallit}, imply that it is
  the root of an Ore polynomial, i.e., a polynomial of the form
  $P(x,y)= ∑_{i=0}^dA_i(x)y^{q^i}$.  If $f$ is the root of this Ore
  polynomial, then the series $g ≔ f/A_0$ is the root of the polynomial
  $Q(x,y) = ∑_{i=0}^dB_i(x)y^{p^i}$, with $B_0 = 1$ and
  $B_i = A_iA_0^{q^i-2}$.  Note that $Q$ is an isolating $q$-Mahler
  equation. By Theorem~\ref{thm:main} we can construct a weighted
  automaton~$ℬ$ that generates~$g$.  Then, we apply
  Theorem~\ref{thm:Cauchy-product}, which allows us to construct a weighted
  automaton $\mathcal{A}$ of the Cauchy product $f = A_0g$.  Now using
  Proposition~\ref{prop:weighted-to-automatic}, $\mathcal{A}$ can be
  determinised, to yield automata that can generate $(f_n)_{n ⩾ 0}$ in
  either reverse or direct reading.
\end{proof}

\section{Mahler equations for  Zeckendorf numeration}
\label{sec:Mahler-Zeckendorf}

In this section we investigate links between generalised Mahler equations
and weighted automata in Pisot numerations, and in particular the Zeckendorf numeration Z.  Our principal result
in this section, Theorem~\ref{thm:main-Z}, is a version of Becker's theorem,
which says that a solution of an isolating Z-Mahler equation, as defined
in  \eqref{eq:Z-Mahler-equation} below, has coefficients computed by a
weighted automaton reading integers in the Zeckendorf base. Conversely, any
such weighted automaton generates a solution of a Z-Mahler equation,
which is possibly non-isolating.  The gap between the two results is
similar to the one in~\cite{Becker-1994}.  The notion of Z-Mahler
equation that we introduce is based on a linear operator~$Φ$, defined on
power series, which is the analogue of the operator $f(x) ↦ f(x^k)$ in
base-$k$ numeration.  This operator~$Φ$ is defined thanks to a function~$ϕ:
ℕ\rightarrow ℕ$ which plays the role of the function $n ↦ kn$. The main
obstacle to our extension of Becker's result is that $ϕ$ is not linear.

\subsection{The Zeckendorf numeration}\label{sec:Zeckendorf}

Recall that the Fibonacci numbers satisfy the recurrence
$F_n=F_{n-1}+F_{n-2}$ with initial conditions $F_{-2}=0$ and $F_{-1}=1$.
The strictly increasing sequence $(F_n)_{n⩾0}$ defines the
\emph{Zeckendorf} numeration system: every natural number has a unique
expansion as $n= ∑_{i = 0}^kb_iF_i$ where $b_i ∈ \{0,1\}$ for each $i$ and
$b_i \times b_{i+1}=0$ for each $i$.  We write $(n)_Z ≔ b_k ⋯ b_0$.  Conversely,
given any finite digit set $B ⊂ ℤ$ and any word $w= w_k\dots w_0 ∈ B^+$,
let $[w]_Z$ denote the natural number~$n$ such that $n = ∑_{i =
  0}^kw_iF_i$. Note that $(n)_Z$ is the canonical Zeckendorf expansion
of~$n$, but conversely, the map $w ↦ [w]_Z$ can be applied to any word over
a finite digit set~$B ⊂ ℤ$.  Thorough descriptions of the properties of
addition in this numeration system can be found in Frougny's exposition
\cite[Chapter~7]{Lothaire}.

\begin{example}\label{ex:preliminary}
  The following weighted automaton generates $(a_n)_{n ⩾ 0}$, where $a_n$
  equals the number of representations of~$n$ as a sum of distinct
  Fibonacci numbers.  Note that each non-zero weight (in red in the figure)
  is equal to~$1$.  This implies that the weight of a path is either $0$
  or~$1$.  This weight is $1$ if the path starts and ends at state~$0$.
  The number of such paths labeled by $(n)_Z$ is exactly~$a_n$. For
  example, $8=[10000]_Z=[1100]_Z= [01011]_Z$, and there are three paths
  each with weight~$1$, namely
  $0 \trans{1} 0 \trans{0}0\trans{0} 0\trans{0}0 \trans{0} 0$,
  $0 \trans{1} 1 \trans{0}2\trans{0} 1\trans{0}2 \trans{0} 0$, and
  $0 \trans{1} 1 \trans{0}2\trans{0} 0\trans{0}0 \trans{0} 0$.
  Therefore $a_8=3$.
 
  \begin{figure}[htbp]
  \begin{center}
  \begin{tikzpicture}[->,>=stealth',semithick,auto,inner sep=3pt]
    \tikzstyle{every state}=[minimum size=14]
    \node[state] (q00) at (0,0) {$0$};
    \node (q00-in) at (-1,0) {} ;
    \node (q00-out) at (0,-1) {} ;
    \node[state] (q01) at (2,1) {$1$};
    \node[state] (q02) at (4,0) {$2$};
    \path (q00-in) edge node {$\tcr{1}$} (q00);
    \path (q00) edge node {$\tcr{1}$} (q00-out);
    \path (q00) edge[out=120,in=60,loop] node {$\begin{array}{c}
                                                  \tcb{0}{:}\tcr{1} \\
                                                  \tcb{1}{:}\tcr{1}
                                                \end{array}$} (q00);
    \path (q00) edge[bend left=13] node[pos=0.7] {$\tcb{1}{:}\tcr{1}$} (q01);
    \path (q01) edge[bend right=13] node[swap] {$\tcb{0}{:}\tcr{1}$} (q02);
    \path (q02) edge[bend right=13] node[swap] {$\begin{array}{c}
                                                \tcb{0}{:}\tcr{1} \\
                                                \tcb{1}{:}\tcr{1}
                                              \end{array}$} (q01);
    \path (q02) edge[bend left=10] node {$\tcb{0}{:}\tcr{1}$} (q00);
  \end{tikzpicture}
  \end{center}
  \label{fig:weighted} 
\end{figure}
\end{example}

\subsection{The function $ϕ$ and its almost-linearity}
\label{sec:fun-phi}

The following function is the analogue, in  a Pisot numeration, of
the map $n ↦ kn$ in base-$k$.  Define $ϕ: ℕ → ℕ$ as  $ϕ(n)≔ [w0]_U$, where $w=(n)_U$. We study   in particular the case where $U$ equals Z;
 There, $ϕ(F_n) = F_{n+1}$. By definition,
\begin{displaymath}
  ϕ \left(∑_{i=0}^kb_i{F_{i}}\right) ≔ ∑_{i=0}^kb_i{F_{i+1}}
\end{displaymath}
where $(n)_Z = b_k ⋯ b_0$. Furthermore, the above equality holds for
any word~$w$ over $\{0,1\}$ such that $[w]_Z=n$ even if there are
consecutive digits~$b_i$ such that $b_i = b_{i+1} = 1$.  The function~$ϕ$
and the map $w ↦ [w]_Z$ are linked by the equation $[wb]_Z = ϕ([w]_Z) + b$
for any word~$w$ over $\{0,1\}$ and any digit $b ∈ \{0,1\}$.

If $(m)_Z = b_k ⋯ b_0$ and $(n)_Z = c_k ⋯ c_0$ with possibly some leading
zeros, $m$ and~$n$ are said to have \emph{disjoint support} if
$b_i \times c_i = 0$ for $0 ⩽ i ⩽ k$. For example, the numbers $2$ and~$3$
have disjoint support, but the numbers $3$ and~$4$ do not.  If $(m)_Z$ and
$(n)_Z$ have disjoint support, then $ϕ(m+n) = ϕ(m)+ϕ(n)$. However, the
function~$ϕ$ is not linear, for example $ϕ(2) = 3 ≠ 4 =
ϕ(1)+ϕ(1)$. Nevertheless, $ϕ$ is \emph{almost} linear: In
Lemma~\ref{lem:almost-linear} we show that $ϕ(m+n)-ϕ(m)-ϕ(n)$ belongs to
$\{-1,0,1\}$ for each $m,n ⩾ 0$.  We write $ϕ^2$ for the function $ϕ ∘ ϕ$.
Note that if $(n)_Z=w$, then $(ϕ^2 (n))_Z=w00$ and~$(ϕ^2 (n )+ 1)_Z = w01$.
We give below the first few values of $ϕ(n)$ and $ϕ^2(n)+1$.

\begin{figure}
  \begin{displaymath}
    \begin{array}{r|cccccccccccccc}
    n & 0 & 1 & 2 & 3 & 4 & 5 & 6 & 7 & 8 & 9 & 10 & 11 & 12 & 13 \\ \hline
  ϕ (n) & 0 & 2 & 3 & 5 & 7 & 8 & 10 & 11 & 13 & 15 & 16 & 18 & 20 & 21 \\
  ϕ^2 (n) +1 & 1 & 4 & 6 & 9 & 12 & 14 & 17 & 19 & 22 & 25 & 27 & 30 & 33 & 35
    \end{array}
  \end{displaymath}
  \caption{The first few values of $ϕ(n)$ and $ϕ^2(n)+1$.}
  \label{fig:valuesofphi}
\end{figure}

The sets $A_1 = \{ ϕ(n) : n ⩾ 0\}$ and $A_2 = \{ ϕ^2 (n)+1 : n ⩾ 0\}$ form
a partition of~$ℕ$, as $A_1$ and $A_2$ are the sets of integers whose
Zeckendorf expansion ends with~$0$ and $1$ respectively. Let $φ$ denote the
golden ratio. Then Lemma~\ref{lem:floor-Cartier} below implies that $A_1$
and $A_2$ are translations of the Beatty sequences
$B_1 = \{ ⌊φ n⌋ : n ⩾ 1 \}$ and $B_2 = \{ ⌊φ^2n⌋ : n ⩾ 1 \}$ which are
known to form a partition of~$ℕ ∖ \{0\}$ as
$\frac{1}{φ} + \frac{1}{φ^2} = 1$.  It will allow us to bound the
non-linearity of~$ϕ$ in Lemma~\ref{lem:almost-linear}.

\begin{lemma} \label{lem:floor-Cartier}
  For $n ⩾ 0$ an integer, we have $ϕ(n) = ⌊φ n+φ-1⌋$ and $ϕ^2(n)
  = ⌊φ^2n+φ -1⌋$. 
\end{lemma}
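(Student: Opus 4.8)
The plan is to reduce everything to Binet's formula and to the observation that the ``defect'' of $\phi$ relative to multiplication by $\varphi$, and of $\phi^2$ relative to multiplication by $\varphi^2$, is governed by the \emph{same} quantity, a finite geometric-type sum in the conjugate $\psi:=\tfrac{1-\sqrt5}{2}=-1/\varphi$. First I would record the elementary facts $\varphi+\psi=1$, $\varphi\psi=-1$, $\psi^2=\psi+1$, $\varphi-1=-\psi$, and check that $F_n=\tfrac{\varphi^{n+2}-\psi^{n+2}}{\sqrt5}$ for all $n\ge-2$: this holds for $n=-2,-1$ by direct computation, and the Fibonacci recurrence propagates it since $\varphi,\psi$ are the roots of $x^2=x+1$. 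A one-line consequence, using $\varphi-\psi=\sqrt5$ and $\varphi^2-\psi^2=(\varphi-\psi)(\varphi+\psi)=\sqrt5$, is
\begin{displaymath}
  F_{i+1}-\varphi F_i \;=\; \psi^{i+2}\;=\; F_{i+2}-\varphi^2 F_i
  \qquad(i\ge0).
\end{displaymath}

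Next, writing $(n)_Z=b_k\cdots b_0$, so that $n=\sum_{i=0}^k b_iF_i$, the definition of $\phi$ gives $\phi(n)=[b_k\cdots b_00]_Z=\sum_{i=0}^k b_iF_{i+1}$ and, using $(\phi^2(n))_Z=b_k\cdots b_000$, also $\phi^2(n)=\sum_{i=0}^k b_iF_{i+2}$. Inserting the displayed identity produces the same error term in both cases:
\begin{displaymath}
  \phi(n)=\varphi n+E_n,\qquad \phi^2(n)=\varphi^2 n+E_n,
  \qquad E_n:=\sum_{i=0}^k b_i\psi^{i+2}.
\end{displaymath}
Since $\varphi-1=-\psi$, the formula $\phi(n)=\lfloor\varphi n+\varphi-1\rfloor$, and identically the one for $\phi^2$, is equivalent to $0\le(\varphi n+\varphi-1)-\phi(n)<1$, that is, to the single estimate
\begin{displaymath}
  -1 \;<\; \psi+\sum_{i=0}^k b_i\psi^{i+2}\;\le\;0 .
\end{displaymath}

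To prove this I would keep only terms of one sign at a time. The terms $b_i\psi^{i+2}$ with $i$ odd are $\le0$ (odd exponent), so dropping them and bounding the rest by the full geometric series gives
\begin{displaymath}
  \psi+\sum_{i=0}^k b_i\psi^{i+2}\ \le\ \psi+\sum_{m\ge1}\psi^{2m}
  \;=\;\psi+\frac{\psi^2}{1-\psi^2}\;=\;\psi+(-\psi)\;=\;0,
\end{displaymath}
where $1-\psi^2=-\psi$. Symmetrically the terms with $i$ even are $\ge0$, and dropping them,
\begin{displaymath}
  \psi+\sum_{i=0}^k b_i\psi^{i+2}\ \ge\ \psi+\sum_{m\ge1}\psi^{2m+1}
  \;=\;\psi+\psi\cdot\frac{\psi^2}{1-\psi^2}\;=\;\psi-\psi^2\;=\;-1 .
\end{displaymath}
Because the left-hand sums are finite while the bounding series have nonzero strictly negative tails, the lower bound is in fact strict, which is exactly what is needed; hence $E_n\in(-1,0]$ and both asserted floor formulas follow at once.

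The one point requiring care — and the only plausible obstacle — is this strictness: one must observe that a finite partial sum of $\sum_{m\ge1}\psi^{2m+1}$ is strictly larger than its limit $-\psi^2$ (the partial sums decrease to it), the degenerate cases where $(n)_Z$ carries no odd-index digit, such as $n=0$ or $n=1$, being handled by hand. I would also remark that the Zeckendorf ``no two consecutive ones'' condition plays no role in the estimate: it is the relation $\psi^2=\psi+1$ that makes $E_n$ depend on $n$ alone and not on the chosen representation, so one may freely work with the canonical one throughout.
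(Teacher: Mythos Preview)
Your proof is correct and follows essentially the same route as the paper's: Binet's formula yields $\phi(n)-\varphi n = \sum b_i\psi^{i+2}$, and this error is trapped in an interval of length~$1$ by comparing with the two geometric series $\sum_{m\ge1}\psi^{2m}$ and $\sum_{m\ge1}\psi^{2m+1}$. Your observation that the \emph{same} error $E_n$ governs both $\phi(n)-\varphi n$ and $\phi^2(n)-\varphi^2 n$ (because $\varphi^2-\psi^2=\varphi-\psi=\sqrt5$) is a clean way to unify the two statements; the paper treats them in parallel and merely remarks that the second ``follows the same lines''. Your final comment about the no-two-consecutive-ones condition being unnecessary is correct but incidental: both proofs use only $b_i\in\{0,1\}$ and the finiteness of the representation.
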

  
The proof is an easy application of Binet's formula, which we include for
completeness.

\begin{proof}
  Let $\bar{φ} = -1/φ$ be the algebraic conjugate of~$φ$; we prove the
  first equality.  From $F_n = \frac{1}{\sqrt{5}}(φ^{n+2} - \bar{φ}^{n+2})$
  it follows that $φ F_n - F_{n+1} = - \bar{φ}^{n+2}$.

  Suppose that $n = ∑_{i=0}^k{F_{n_i}}$ for a sequence $n_0,…,n_k$ of
  integers such that $n_{i+1} > n_i+1$ for each integer $0 ⩽ i ⩽ k-1$.
  We have
  \begin{displaymath}
    φ n - ϕ(n) = ∑_{i=0}^k{(φ F_{n_i} - F_{n_i+1})}
    = -∑_{i=0}^k{\bar{φ}^{n_i+2}}
  \end{displaymath}
  We now bound the summation $∑_{i=0}^k{\bar{φ}^{n_i+2}}$.  Since
  $\bar{φ}$ is negative, odd powers of~$\bar{φ}$ are negative
  while even powers are positive, so
  \begin{displaymath}
    -\bar{φ}^2 = \frac{\bar{φ}^3}{1-\bar{φ}^2} = 
    ∑_{n ⩾ 1}{\bar{φ}^{2n+1}} <
    ∑_{i=0}^k{\bar{φ}^{n_i+2}}
    < ∑_{n ⩾ 1}{\bar{φ}^{2n}} =
    \frac{\bar{φ}^2}{1-\bar{φ}^2} = -\bar{φ}.
  \end{displaymath}
  It follows that
  \begin{displaymath}
    0 = \bar{φ}+φ-1 < φ n+φ-1-ϕ(n) < \bar{φ}^2+φ-1 = 1
  \end{displaymath}
  Since $ϕ(n)$ is an integer, this completes the proof of the first
  equality.  The proof of the second one uses $φ^2F_n - F_{n+2} = -
  \bar{φ}^{n+2}$ and follows the same lines.
\end{proof}

We define the \emph{linearity defect~$δ$} of~$ϕ$ by $δ(m,n) =
ϕ(m+n)-ϕ(m)-ϕ(n)$ for  non-negative integers $m$ and~$n$.  It turns out
that although $ϕ$ is not linear, its linearity defect is small, as the
following lemma shows.

\begin{lemma} \label{lem:almost-linear}
  For natural numbers $m,n$, we have $-1 ⩽ δ(m,n) ⩽ 1$.
\end{lemma}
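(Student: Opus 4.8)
The plan is to use the characterisation of $\phi$ from Lemma~\ref{lem:floor-Cartier}, namely $\phi(n) = \lfloor \varphi n + \varphi - 1\rfloor$, and reduce the claim to an elementary inequality about floor functions. Writing $\theta = \varphi - 1 = 1/\varphi \in (0,1)$, we have $\phi(n) = \lfloor \varphi n + \theta\rfloor$ for all $n \geq 0$, so
\begin{displaymath}
  \delta(m,n) = \lfloor \varphi(m+n) + \theta\rfloor - \lfloor \varphi m + \theta\rfloor - \lfloor \varphi n + \theta\rfloor.
\end{displaymath}
I would then invoke the standard fact that for any reals $x, y$ one has $\lfloor x + y\rfloor - \lfloor x\rfloor - \lfloor y\rfloor \in \{0, 1\}$; applying this with $x = \varphi m + \theta$ and $y = \varphi n + \theta$ gives $\lfloor \varphi(m+n) + 2\theta\rfloor - \lfloor \varphi m + \theta\rfloor - \lfloor \varphi n + \theta\rfloor \in \{0,1\}$. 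The quantity $\delta(m,n)$ differs from this by $\lfloor \varphi(m+n) + \theta\rfloor - \lfloor \varphi(m+n) + 2\theta\rfloor$, which lies in $\{-1, 0\}$ since $0 < \theta < 1$ means the two arguments differ by less than $1$. Combining the two, $\delta(m,n) \in \{-1, 0, 1\}$, which is exactly the assertion.

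The key steps in order: first, restate $\phi$ via Lemma~\ref{lem:floor-Cartier} and fix the notation $\theta = \varphi - 1$, checking $0 < \theta < 1$; second, recall or quickly justify the two floor-function lemmas used (super-additivity $\lfloor x\rfloor + \lfloor y\rfloor \leq \lfloor x+y\rfloor \leq \lfloor x\rfloor + \lfloor y\rfloor + 1$, and monotonicity with the bound that shifting an argument by less than $1$ changes the floor by at most $1$); third, chain the two estimates as above to pin $\delta(m,n)$ between $-1$ and $1$. An alternative, perhaps cleaner, route avoids the intermediate $2\theta$ term: write $\varphi m + \theta = \lfloor \varphi m + \theta\rfloor + \{\varphi m + \theta\}$ and similarly for $n$, add, and observe that $\varphi(m+n) + \theta = \lfloor\varphi m+\theta\rfloor + \lfloor\varphi n+\theta\rfloor + \{\varphi m+\theta\} + \{\varphi n+\theta\} - \theta$; since the fractional parts lie in $[0,1)$ and $\theta \in (0,1)$, the correction term $\{\varphi m+\theta\}+\{\varphi n+\theta\}-\theta$ lies in the open interval $(-1, 2)$, so its floor is in $\{-1,0,1\}$, and that floor is precisely $\delta(m,n)$.

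I do not expect a serious obstacle here; the statement is genuinely elementary once Lemma~\ref{lem:floor-Cartier} is in hand. The only point requiring a little care is bookkeeping with the half-open intervals for the fractional parts — one must make sure the endpoints are handled correctly so that the correction term's floor cannot reach $2$ or drop below $-1$ (it cannot, since $\{\varphi m+\theta\}+\{\varphi n+\theta\} < 2$ strictly and $-\theta > -1$ strictly). It is also worth noting for the record that both extreme values are attained — $\delta(1,1) = \phi(2) - 2\phi(1) = 3 - 4 = -1$ as already observed in the text, and $\delta = 1$ occurs as well — so the bound is sharp, though sharpness is not needed for what follows.
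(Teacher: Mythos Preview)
Your proof is correct and follows essentially the same approach as the paper: both invoke Lemma~\ref{lem:floor-Cartier} and then perform elementary floor-function bookkeeping. The paper's version is marginally more direct --- it uses the strict bounds $\varphi k + \varphi - 2 < \phi(k) < \varphi k + \varphi - 1$ for $k \in \{m,n,m+n\}$, adds the three inequalities to get $-\varphi < \delta(m,n) < 3 - \varphi$, and concludes using $1 < \varphi < 2$ --- but your decomposition via the super-additivity identity and the shift-by-less-than-$1$ bound achieves the same thing with the same ingredients.
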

We remark that this proof can be generalised to show that if $U$ is Pisot,
then there is a $k=k(U)$ such that $|δ(m,n)|⩽ k$ for $m,n ∈ ℕ$.
\begin{proof}
  We apply the relation $ϕ(k) = ⌊φ k+φ-1⌋$, obtained
  in Lemma~\ref{lem:floor-Cartier},  to $k=m+n$, $k=m$ and $k=n$:
  \begin{align*}
   φ(m+n)+φ -2 & < ϕ(m+n) < φ(m+n)+φ-1,\\
    1-φ m-φ    & < -ϕ(m)  <  2-φ m-φ \mbox{ and } \\
    1-φ n-φ    & < -ϕ(n)  <  2-φ n-φ .
  \end{align*}
  Adding these three relations gives $-φ < ϕ(m+n)-ϕ(m)-ϕ(n) < 3-φ$, and the
  statement follows as $1 < φ < 2$.
\end{proof}

\subsection{Regularity of the linearity defect}

We will sometimes use $\bar{1}$ to denote $-1$, in particular when $-1$ is
an element of an alphabet.  Set $B ≔ \{0,1\}$ and $\bar{B} ≔
\{\bar{1},0,1\}$.  Lemma~\ref{lem:almost-linear} tells us that $δ(m,n) ∈
\bar{B}$.  Next, we obtain a more precise version of
Lemma~\ref{lem:almost-linear}, as we will later need to use an automaton
that, given $m>n$, computes $δ(m-n,n)$.

The following theorem is proved in \cite[Proposition~7.3.11,
Chapter~7]{Lothaire}.  Recall that a set of words~$K$ is \emph{regular} if
there is a deterministic automaton $ℬ = ⟨ S,B, Δ, \{s_0\}, F ⟩$ such that
$Δ(s_0, w)$ is in an accepting state if and only if $w ∈ K$.

\begin{theorem}\label{thm:regularity-0}
  Let $U$ be a Pisot numeration system. For any finite set $C ⊂ ℤ$, the set
  $\{ w ∈ C^*: [w]_U = 0\}$ is regular.
\end{theorem}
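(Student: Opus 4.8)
The plan is to build directly a finite deterministic automaton recognising $L_C := \{w \in C^* : [w]_Z = 0\}$. Fix a word $w = w_k \cdots w_0 \in C^*$, read left to right (most significant digit first), and put $M := \max_{c \in C}|c|$. After reading the first $t$ digits $w_k,\dots,w_{k-t+1}$, which occupy positions $k,\dots,r$ of $w$ where $r := k+1-t$ is the length of the not-yet-read suffix, the contribution of this prefix to $[w]_Z = \sum_i w_i F_i$ can be written as $A F_r + B F_{r-1}$ for a pair of integers $(A,B)$ depending only on the prefix (not on $r$): indeed, by the Fibonacci recurrence each $F_{r+j}$ with $j \ge 0$ is a fixed integer combination of $F_r$ and $F_{r-1}$. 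We take $(A,B) \in \mathbb{Z}^2$ as the state. The initial state is $(0,0)$; reading a new digit $d$ places it at position $r-1$, and using $F_r = F_{r-1}+F_{r-2}$ one checks the transition is $(A,B) \mapsto (A+B+d,\ A)$; and when all of $w$ has been read we have $r = 0$, so the value is $A F_0 + B F_{-1} = A+B$ (as $F_0 = F_{-1} = 1$). Thus $w \in L_C$ if and only if the final state lies on the line $A+B=0$: this is an (infinite) deterministic automaton recognising $L_C$.

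The crux is that only finitely many states are \emph{useful}, i.e.\ lie on some path from $(0,0)$ to an accepting state; all other states may then be identified with a single absorbing, non-accepting sink, producing a genuine finite automaton (and a word of $L_C$ never reaches the sink, since each of its prefixes is co-accessible, the remaining suffix being a witness). Finiteness rests on the Pisot property $|\bar{\varphi}| < 1$ of the golden ratio~$\varphi$, used as in Lemma~\ref{lem:floor-Cartier}. On one hand, the state recursion $A_s = A_{s-1}+A_{s-2}+d_s$, $B_s = A_{s-1}$ (with $A_0 = A_{-1} = 0$) gives, for $E_s := A_s - \varphi B_s$, the relation $E_s = \bar{\varphi} E_{s-1} + d_s$ with $E_0 = 0$, whence every \emph{accessible} state satisfies $|A - \varphi B| < M\varphi^2$ — heuristically, $A$ is $B$ shifted up by one Fibonacci index. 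On the other hand, if a state $(A,B)$ is \emph{co-accessible}, reached from $(0,0)$ by a prefix whose $r$ unread positions are completed into a word of $L_C$, then $A F_r + B F_{r-1}$ equals minus the value of that length-$r$ suffix, so $|A F_r + B F_{r-1}| \le M(F_{r+1}-2)$. Writing $A F_r + B F_{r-1} = B(\varphi F_r + F_{r-1}) + (A - \varphi B)F_r$ and using the exact identity $\varphi F_r + F_{r-1} = \varphi^{r+1}(\varphi+2)/\sqrt 5$ coming from Binet's formula, the factor $\varphi^{r+1}$ cancels against $F_{r+1}, F_r \asymp \varphi^{r+2}, \varphi^{r+1}$, yielding a bound on $|B|$, and hence on $|A|$, that is uniform in $r$.

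The main obstacle is exactly this uniform estimate: taken separately, the accessible states and the co-accessible states each form infinite sets growing at Fibonacci rate, and it is only their intersection that is finite; the clean cancellation of $\varphi^{r+1}$ is what makes the Fibonacci bookkeeping go through uniformly over all suffix lengths $r \ge 0$. Granting this, the quotient automaton described above is finite and deterministic and recognises $L_C$, so $L_C$ is regular. Finally, we note that the statement is also a special case of Frougny's theorem that normalisation in the Zeckendorf numeration system is realised by a finite automaton, \cite[Proposition~7.3.11]{Lothaire}, which is the result underlying the theorem as stated.
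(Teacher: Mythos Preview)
Your argument is correct. Note first that the paper does not actually prove this theorem: it is quoted verbatim from Frougny's exposition \cite[Proposition~7.3.11]{Lothaire}, where it arises as a special case of the general fact that normalisation in a Pisot numeration system is computable by a finite automaton. So the ``paper's proof'' is simply a citation.

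What you have supplied instead is a self-contained construction specific to the Fibonacci case. The idea---carry the pair $(A,B)$ so that the prefix read so far contributes $AF_r+BF_{r-1}$, update by $(A,B)\mapsto(A+B+d,A)$, accept on $A+B=0$---is the natural ``remainder'' automaton, and your finiteness argument is the right one: accessibility forces $|A-\varphi B|$ to stay bounded (because $\bar\varphi$ is contracting), while co-accessibility bounds $|AF_r+BF_{r-1}|$ by a Fibonacci-sized quantity, and the exact identity $\varphi F_r+F_{r-1}=\varphi^{r+1}(\varphi+2)/\sqrt5$ lets the two estimates combine into a bound on $|B|$ (hence $|A|$) independent of $r$. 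The passage from the infinite state set to a finite automaton by collapsing non-useful states is routine and you justify it correctly: every prefix of a word in $L_C$ sits in a useful state, so acceptance is preserved.

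Compared with invoking Frougny's theorem, your route is more elementary and explicit---one sees exactly which finitely many states can occur and why, with the Pisot inequality $|\bar\varphi|<1$ doing all the work---at the cost of being tailored to the Zeckendorf system. Frougny's approach, by contrast, handles arbitrary linear recurrences with a dominant Pisot root in one stroke, which is what would be needed for the generalisation alluded to in the paper's introduction. Your closing remark acknowledging this connection is apt.
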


Fix a finite set $C$ of digits. Let $u = u_m ⋯ u_0$ and $v = v_n ⋯ u_0$ be
words in $C^*$.  By padding with leading zeros, it can be assumed that
$m = n$.  We denote respectively by $u ⊞ v$ and $u ⊟ v$ the two words
$(u_m + v_m) ⋯ (u_0 + v_0)$ and $(u_m - v_m) ⋯ (u_0 - v_0)$.  Note that
this definition is slightly ambiguous as padding $u$ and~$v$ with more
leading zeros yields a word with more leading zeros.  However, it does not
hurt as we are only interested in $[u ⊞ v]_Z$ and $[u ⊟ v]_Z$.  Note that
both operations are associative.  It is easily verified that
\begin{equation}\label{eq:plus}
  [u ⊞ v]_Z = [u]_Z + [v]_Z \quad\text{and}\quad [u ⊟ v]_Z = [u]_Z - [v]_Z.
\end{equation}

Note that equality $u0 ⊞ v0 = (u ⊞ v)0$ holds for words $u,v ∈ C^*$ and
similarly for the operation~$⊟$.  The equality
$[(m)_Z0 ⊞ (n)_Z0]_Z = ϕ(m) + ϕ(n)$ also holds for integers $m,n ⩾ 0$;
similarly for the operation~$⊟$. Let $L = \{ (m)_Z : m ⩾ 0 \} ⊂ B^*$ be
the set of canonical expansions.

In the remainder of this section we discuss the regularity of~$δ$, by which
we mean that there is a deterministic automaton that outputs $δ(m-n,n)$
when given $(m)_Z$ and $(n)_Z$; see Corollary~\ref{cor:autom-defect} for
the precise statement.  For $b ∈ \bar{B}$, define
\begin{displaymath}
  X_b ≔ \{ (m)_Z ⊟ (n)_Z : m ⩾ n ⩾ 0 \text{ and } δ(m-n,n) = b \}.
\end{displaymath}

\begin{proposition} \label{pro:defect-reg}
  The sets $X_{\bar{1}}$, $X_0$ and $X_1$ are pairwise disjoint and regular.
\end{proposition}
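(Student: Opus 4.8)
The plan is to handle disjointness and regularity separately, both built on the single observation that $\delta(m-n,n)$ depends only on the word $w := (m)_Z \boxminus (n)_Z$, not on the pair $(m,n)$ that produces it. To see this, write $(m)_Z = b_k\cdots b_0$ and $(n)_Z = c_k\cdots c_0$ (padded to a common length). The defining formula $\phi(\sum b_iF_i)=\sum b_iF_{i+1}$, valid for any $\{0,1\}$-word, gives $\phi(m)-\phi(n)=\sum_i(b_i-c_i)F_{i+1}=[w0]_Z$, while $m-n=[w]_Z$ yields $\phi(m-n)=\phi([w]_Z)$; subtracting,
\begin{displaymath}
  \delta(m-n,n)=\phi(m)-\phi(m-n)-\phi(n)=[w0]_Z-\phi([w]_Z).
\end{displaymath}
Since the right-hand side is determined by $w$, no word can belong to two of $X_{\bar1}$, $X_0$, $X_1$, so these sets are pairwise disjoint. (Lemma~\ref{lem:almost-linear} is not needed for this; it only guarantees that every relevant word lands in one of the three sets.)

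For regularity I would use three synchronised tracks and Frougny's theorem. By Theorem~\ref{thm:regularity-0} (and the fact that Zeckendorf addition is realised by an automaton), the relation $\{(m)_Z\otimes(n)_Z\otimes(m-n)_Z : m\ge n\ge 0\}$ is regular: it is a relabelling of the addition relation $(m-n)+n=m$, with leading-zero padding to the length of $(m)_Z$. On such a triple, writing $u=(m)_Z$, $v=(n)_Z$, $y=(m-n)_Z$, the operator $\phi$ is ``append a $0$'', so, since $\phi(k)=[(k)_Z0]_Z$,
\begin{displaymath}
  \delta(m-n,n)=b \iff [u0]_Z-[y0]_Z-[v0]_Z=b \iff \bigl[(u\boxminus y\boxminus v)0\bigr]_Z=b.
\end{displaymath}
The word $s:=u\boxminus y\boxminus v$ is a letter-to-letter image of the three tracks over the finite digit set $\{-2,-1,0,1\}$, and $\{s : [s0]_Z=b\}$ is regular: it is the right quotient by the letter $0$ of $\{s' : [s']_Z=b\}$, which is regular since $[s']_Z=b$ rewrites as $[s'\boxminus e]_Z=0$ for a word $e\in 0^{*}b$, whence Theorem~\ref{thm:regularity-0} together with closure of regular languages under letter-to-letter morphisms, intersection, and projection applies. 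Pulling this condition back through $s$, intersecting with the addition relation, and projecting onto the first two tracks gives the regular relation $\{(m)_Z\otimes(n)_Z : m\ge n\ge 0,\ \delta(m-n,n)=b\}$; finally $X_b$ is its image under the letter-to-letter morphism $(a,c)\mapsto a-c$, hence regular.

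The main obstacle is conceptual rather than computational: $\delta$ is assembled from the \emph{non-linear} map $\phi$, so a priori it is unclear that $\delta(m-n,n)$ is ``local'' enough to be recognised by a finite automaton reading $w$. The identity $\delta(m-n,n)=[w0]_Z-\phi([w]_Z)$, combined with the facts that $\phi$ is merely ``append a $0$'' once canonical expansions are available and that normalisation and addition of Zeckendorf expansions are automatic (Theorem~\ref{thm:regularity-0}), is exactly what removes this difficulty. What remains — tracking the leading-zero padding, matching word lengths across the $\otimes$ and $\boxminus$ operations, and verifying that the existentially guessed third track is forced to equal the genuine expansion $(m-n)_Z$ — is routine bookkeeping but should be carried out with some care.
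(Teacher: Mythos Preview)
Your argument is correct and rests on the same mechanism as the paper's: the value $\delta(m-n,n)$ is determined by the word $(m)_Z\boxminus(n)_Z$ because it equals $[w0]_Z-\phi([w]_Z)$, and regularity is obtained by nondeterministically producing the canonical expansion of $m-n$ and then invoking Theorem~\ref{thm:regularity-0}.

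The packaging, however, differs. For disjointness, the paper routes through Lemma~\ref{lem:key-sum}, which characterises $\delta(m-n,n)=b$ via the existence of a $w\in L$ satisfying two $[\cdot]_Z=0$ conditions; your one-line identity $\delta(m-n,n)=[w0]_Z-\phi([w]_Z)$ is more direct. For regularity, the paper builds an explicit NFA that reads $(m)_Z\boxminus(n)_Z$, guesses digits of $(m-n)_Z$, and simulates the $\{u:[u]_Z=0\}$-automaton on the difference; you instead carry $(m-n)_Z$ as a third track of the Zeckendorf addition relation, test $[(u\boxminus y\boxminus v)0]_Z=b$ by inverse morphism and quotient, and then project and apply $(a,c)\mapsto a-c$. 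The paper's version yields a concrete automaton (used immediately afterwards in Corollary~\ref{cor:autom-defect} and Figure~\ref{fig:autom-defect}); your version is cleaner as a regularity proof but would need to be unwound to extract the small explicit machine the paper goes on to use.
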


The fact that the three sets $X_{\bar{1}}$, $X_0$ and $X_1$ are pairwise
disjoint means that whenever $(m)_Z ⊟ (n)_Z = (m')_Z ⊟ (n')_Z$, then
$δ(m-n,n) = δ(m'-n',n')$.
We remark that using similar methods, we can also show that the sets $\{
(m)_Z ⊞ (n)_Z : m,n ⩾ 0 \text{ and } δ(m,n) = b  \}$, for $b ∈ \bar{B}$, are
also pairwise disjoint and regular.
The proof of Proposition~\ref{pro:defect-reg} is based on the following key
lemma which gives a characterisation of $δ(m-n,n) =b$.
\begin{lemma} \label{lem:key-sum}
  Let $b ∈ \bar{B}$. Then for $m ⩾ n ⩾ 0$
  \begin{displaymath}
    δ(m-n,n) = b \iff
    ∃ w ∈ L \mbox{ such that }
    \begin{cases}
      [w ⊟ \left((m)_Z ⊟ (n)_Z\right)]_Z = 0 \\
      [w0 ⊟ \left((m)_Z0 ⊟ (n)_Z0\right)]_Z = -b.
    \end{cases}
  \end{displaymath}
\end{lemma}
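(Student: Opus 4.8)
The plan is to unwind the definitions so that the two displayed conditions become plain identities involving $\phi$, after which the statement is just a restatement of the definition of the linearity defect.

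First, I would note that the first condition pins down $w$. Using $[u \boxminus v]_Z = [u]_Z - [v]_Z$ and the associativity of $\boxminus$, one has $[w \boxminus ((m)_Z \boxminus (n)_Z)]_Z = [w]_Z - (m-n)$; since $[(m)_Z \boxminus (n)_Z]_Z = m - n$, the first condition is equivalent to $[w]_Z = m - n$. As $w$ is required to lie in $L$, i.e.\ to be a canonical Zeckendorf expansion (in particular a word over $\{0,1\}$), this forces $w = (m-n)_Z$, and conversely $(m-n)_Z \in L$ always satisfies the first condition. So the existential quantifier over $w \in L$ amounts to the single choice $w = (m-n)_Z$, and it remains to check that for this $w$ the second condition is equivalent to $\delta(m-n,n) = b$.

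Second, I would evaluate the left-hand side of the second condition. From $u0 \boxminus v0 = (u \boxminus v)0$, from $[(m)_Z 0]_Z = \phi(m)$ and $[(n)_Z 0]_Z = \phi(n)$ (appending a $0$ to a canonical expansion computes $\phi$ of its value), and from $[u \boxminus v]_Z = [u]_Z - [v]_Z$, we obtain
\[
  [w0 \boxminus ((m)_Z 0 \boxminus (n)_Z 0)]_Z = [w0]_Z - \bigl(\phi(m) - \phi(n)\bigr).
\]
Since $w$ is a word over $\{0,1\}$, the relation $[wb]_Z = \phi([w]_Z) + b$ gives $[w0]_Z = \phi([w]_Z)$, and $[w]_Z = m - n$ by the first step, so $[w0]_Z = \phi(m-n)$. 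Hence the left-hand side equals $\phi(m-n) - \phi(m) + \phi(n)$, and the second condition reads $\phi(m-n) - \phi(m) + \phi(n) = -b$, equivalently $\phi(m) - \phi(m-n) - \phi(n) = b$.

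Finally, by the definition $\delta(p,n) = \phi(p+n) - \phi(p) - \phi(n)$ applied with $p = m-n$, the last equality is exactly $\delta(m-n,n) = b$. Combining the two directions: if $\delta(m-n,n) = b$ then $w := (m-n)_Z$ lies in $L$ and satisfies both conditions, while any $w \in L$ satisfying both conditions must have $[w]_Z = m-n$ by the first, and then the second forces $\delta(m-n,n) = b$. The whole argument is bookkeeping with the definitions; the only two points that need care are that appending a $0$ to a $\{0,1\}$-word $w$ yields $\phi([w]_Z)$, and the sign, i.e.\ that $\delta(m-n,n)$ is $\phi(m) - \phi(m-n) - \phi(n)$ and not its negative---this is what makes the right-hand side $-b$ rather than $b$ come out correctly.
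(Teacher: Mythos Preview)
Your proof is correct and follows essentially the same approach as the paper's own proof: both set $w=(m-n)_Z$ for the forward direction and, for the converse, use the first condition together with $w\in L$ to force $w=(m-n)_Z$, then compute $[w0]_Z=\phi(m-n)$ and read off $\delta(m-n,n)=b$ from the second condition. Your write-up is slightly more explicit about the sign and about why $[w0]_Z=\phi([w]_Z)$, but the argument is the same.
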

\begin{proof}
  Suppose that $δ(m-n,n) = b$ and let $w$ be equal to $(m-n)_Z$.  Then,
  repeated use of~\eqref{eq:plus} gives
  \begin{align*}
    [w ⊟ \left ((m)_Z ⊟ (n)_Z\right)]_Z &=[w]_Z - [(m)_Z ⊟ (n)_Z)]_Z \\
       & = [w]_Z - [(m)_Z]_Z + [(n)_Z)]_Z= [w]_Z - m + n = 0,
  \end{align*}
  and similarly
  $[w0 ⊟ \left( (m)_Z0 ⊟ (n)_Z0\right) ]_Z = [w0]_Z - ϕ(m) + ϕ(n) = -δ(m-n,n)
  = -b$.

  Conversely, suppose that there exists $w ∈ L$ satisfying both required
  equalities. Again using~\eqref{eq:plus}, the first equality $[w ⊟ \left( (m)_Z ⊟ (n)_Z\right)]_Z = 0$
  implies that $[w]_Z = m-n$ and $[w0]_Z = ϕ(m-n)$.  Combined with $w ∈ L$,
  it shows that $w = (m-n)_Z$.  The second equality yields
  $[w0 ⊟ \left((m)_Z0 ⊟ (n)_Z0\right)]_Z = [w0]_Z - ϕ(m) + ϕ(n) =
  -δ(m-n,n)$. Now the result follows by Lemma~\ref{lem:almost-linear}. 
\end{proof}

Now we come to the proof of Proposition~\ref{pro:defect-reg}.
\begin{proof}[Proof of Proposition~\ref{pro:defect-reg}]
  By Lemma~\ref{lem:key-sum}, the value of $δ(m-n,n)$ is determined by the
  word $(m)_Z ⊟ (n)_Z$.  This shows that the three sets $X_{\bar{1}}$, $X_0$
  and~$X_1$ are pairwise disjoint.  It remains to show that they are
  regular.  We will prove that $X_0$ is regular, the proofs for $X_{\bar{1}}$
  and~$X_1$ being similar.
  
  The construction of a non-deterministic automaton accepting~$X_0$ is
  based on the statement of Lemma~\ref{lem:key-sum}.  Such an automaton
  reads $(m)_Z ⊟ (n)_Z$, and non-deterministically ``guesses'' the word $w
  ∈ L$ given by Lemma~\ref{lem:key-sum}.  In particular, it checks that
  both equalities $ [w ⊟ ((m)_Z ⊟ (n)_Z)]_Z = 0$ and $ [(w0 ⊟ ((m)_Z ⊟
  (n)_Z))0]= 0$ hold.  This means that each of its transitions reads a
  symbol $b' ∈ \bar{B} $ of $(m)_Z ⊟ (n)_Z$, and guesses a symbol~$b'' ∈ B$
  of~$w$.  Non-deterministic guessing is done by having two transitions
  reading a given symbol~$b'$, one making the guess $b'' = 0$ and another one
  making the guess $b'' = 1$.  Note that $(m)_Z ⊟ (n)_Z$ has entries from
  $\bar{B} $ and $w$ has entries from $B$, so $w ⊟ ((m)_Z ⊟ (n)_Z)$ has
  entries from $C ≔ \{\bar{1}, 0, 1, 2\}$.  In order to check both required
  equalities, the automaton simulates a deterministic automaton accepting
  $Y = \{ u ∈ C^* : [u]_Z = 0 \}$ for $C = \{\bar{1}, 0, 1,2 \}$.  If the
  read symbol is~$b'$ and the guessed symbol for~$w$ is~$b''$, the automaton
  simulates the automaton for~$Y$ with entry~$b''-b'$.  The automaton also
  stores in its states the last guessed~$b''$ to check that there are no
  consecutive digits~$1$s in~$w$.
  
  We give below a more formal description of a non-deterministic
  automaton~$ℬ$ accepting~$X_0$.  Let $C=\{\bar{1}, 0, 1, 2\}$ and let $\mathcal{C} =
  ⟨ Q, C,Δ_C, \{q_0\},F⟩$ be a deterministic automaton accepting $\{ u ∈
  C^* : [u]_Z = 0 \}$, whose existence is guaranteed by
  Theorem~\ref{thm:regularity-0}.  The input alphabet of~$ℬ$ is $\bar{B}$
  since $ℬ$ is fed with words of the form $(m)_Z ⊟ (n)_Z$ for $m,n ⩾ 0$.
  The state set of~$ℬ$ is the set~$Q × \{0,1\}$ and its unique initial
  state is~$(q_0,0)$.  Its transition set~$Δ$ is defined as 
  \begin{align*}
    Δ  {} ≔ {} & \{ (p,0) \trans{b'} (q,0) : Δ_C(p,-b') = q \} \\
       {} ∪ {} & \{ (p,1) \trans{b'} (q,0) : Δ_C(p,-b') = q \} \\
       {} ∪ {} & \{ (p,0) \trans{b'} (q,1) : Δ_C(p,1-b') = q \}.
  \end{align*}
  Each transition reads a symbol~$b'$ of the input word $(m)_Z ⊟ (n)_Z$ and
  guesses a symbol $b'' ∈ B$ for the word~$w$ (see Lemma~\ref{lem:key-sum}).
  The corresponding symbol of $w ⊟ \left((m)_Z ⊟ (n)_Z\right)$ is thus $b'' -
  b'$.  The second component of each state is equal to the last guessed~$b$
  to check that $w$ does not contain two consecutive~$1$s.
  The set of final states is
  \begin{displaymath}
    F' ≔ \{ (q,x) : q ∈ F \text{ and }
                    ∃ q' ∈ F \text{ with } (q,x) \trans{0} (q',0) \},
  \end{displaymath}
  so that both statements are verified.  The first condition $q ∈ F$
  guarantees the first equality $[w ⊟ ((m)_Z ⊟(n)_Z)]_Z = 0$. The second
  condition guarantees the second equality $[(w ⊟ ((m)_Z ⊟ (n)_Z))0]= 0$.
\end{proof}

\begin{corollary} \label{cor:autom-defect}
  There exists a deterministic automaton
  $\mathcal{D} = ⟨ Q, \bar{B}, Γ, q_0, τ ⟩$ with an output function
  $τ : Q → \bar{B}$ such that $Γ(q_0,0) = q_0$ and
  $τ(Γ(q_0, (m)_Z ⊟ (n)_Z)) = δ(m-n,n)$ for all $m ⩾ n ⩾ 0$.
\end{corollary}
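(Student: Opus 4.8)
The idea is that $\mathcal D$ can be obtained by running, in parallel, deterministic automata that recognise the three sets $X_{\bar 1}$, $X_0$, $X_1$ of Proposition~\ref{pro:defect-reg}, and then reading off, from which of them currently accepts, the corresponding value of the defect. By Proposition~\ref{pro:defect-reg} each $X_b$, $b\in\bar{B}$, is regular, so I would fix for each $b$ the \emph{minimal} deterministic automaton $\mathcal D_b=\langle Q_b,\bar{B},\Delta_b,q_{0,b},F_b\rangle$ recognising $X_b$. First I form the product automaton on the finite state set $Q=Q_{\bar 1}\times Q_0\times Q_1$, with initial state $q_0=(q_{0,\bar 1},q_{0,0},q_{0,1})$ and componentwise transition function $\Gamma$, and equip it with the output function $\tau\colon Q\to\bar{B}$ defined by $\tau(p_{\bar 1},p_0,p_1)=b$ when $p_b\in F_b$ while $p_{b'}\notin F_{b'}$ for the two other indices, and $\tau=0$ otherwise (the latter value being immaterial; note that two distinct components cannot accept simultaneously, by the pairwise disjointness in Proposition~\ref{pro:defect-reg}).

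To verify $\tau(\Gamma(q_0,(m)_Z\boxminus(n)_Z))=\delta(m-n,n)$, fix $m\geq n\geq 0$, put $v=(m)_Z\boxminus(n)_Z$ and $b=\delta(m-n,n)$. By Lemma~\ref{lem:almost-linear} we have $b\in\bar{B}$, hence $v\in X_b$; by Proposition~\ref{pro:defect-reg}, $v\notin X_{b'}$ for the two other indices. Thus after reading $v$ exactly the $b$-th component of the product automaton is accepting, so $\tau(\Gamma(q_0,v))=b$. The same argument applies verbatim when $v$ is padded with leading zeros: a padded pair of Zeckendorf expansions represents the same pair $(m,n)$, and the remark following Proposition~\ref{pro:defect-reg} states precisely that $\delta(m-n,n)$ is a function of the word $(m)_Z\boxminus(n)_Z$ alone and is independent of the chosen padding.

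It remains to secure the normalisation $\Gamma(q_0,0)=q_0$. It suffices to show that each $X_b$ satisfies $0^{-1}X_b=X_b$ as a subset of $\bar{B}^*$: then, in the minimal automaton, the state reached from $q_{0,b}$ on reading $0$ carries the residual language $0^{-1}X_b=X_b$ and hence equals $q_{0,b}$, and $\Gamma(q_0,0)=q_0$ follows componentwise. The inclusion $X_b\subseteq 0^{-1}X_b$ is immediate by padding both Zeckendorf expansions with a leading zero. For the reverse inclusion, suppose $0w\in X_b$, say $0w=(M)_Z\boxminus(N)_Z$ with $M\geq N$. Since the leading digit of $0w$ is $0$, among the decompositions of the fixed word $0w$ we may pick one in which the leading digits of $(M)_Z$ and $(N)_Z$ are both $0$ — replacing a leading $1$ by a leading $0$ never violates the no-two-consecutive-$1$s condition — and by the well-definedness remark above this decomposition still satisfies $\delta(M-N,N)=b$. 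Deleting the common leading $0$ exhibits $w$ as $(M)_Z\boxminus(N)_Z$ for this same pair $(M,N)$, so $w\in X_b$.

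The substantive work has already been done in Proposition~\ref{pro:defect-reg} and Lemma~\ref{lem:almost-linear}, and what remains for Corollary~\ref{cor:autom-defect} is essentially automata bookkeeping. The one point that needs genuine care, and which I expect to occupy most of a full write-up, is the handling of leading zeros: that $\delta(m-n,n)$ really is a function of the word $(m)_Z\boxminus(n)_Z$ alone, and that each $X_b$ is closed under prepending the digit $0$ — these two facts together are exactly what allow the output function to be well defined and the condition $\Gamma(q_0,0)=q_0$ to be imposed without disturbing the computed value.
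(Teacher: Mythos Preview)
Your proof is correct and follows essentially the same approach as the paper, which simply says ``it suffices to combine the three deterministic automata for $X_{\bar 1}$, $X_0$ and $X_1$ obtained in Proposition~\ref{pro:defect-reg}.'' You supply considerably more detail than the paper does: the explicit product construction, the definition of~$\tau$, and in particular a genuine argument for the normalisation $\Gamma(q_0,0)=q_0$ via the identity $0^{-1}X_b=X_b$ and minimality, which the paper leaves entirely implicit. Your handling of the ``both leading digits equal~$1$'' case is correct (the new pair $(M',N')$ satisfies $M'-N'=M-N$, and the disjointness remark after Proposition~\ref{pro:defect-reg} then forces $\delta(M'-N',N')=b$), though the phrase ``replacing a leading~$1$ by a leading~$0$'' is slightly misleading --- you are not editing a word in place but passing to a different pair whose padded representations begin with~$0$; it would read more cleanly if stated that way.
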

\begin{proof}
  It suffices to combine the three deterministic automata for
  $X_{\bar{1}}$, $X_0$ and~$X_1$ obtained in Proposition~\ref{pro:defect-reg}. 
\end{proof}
It should be noted that the set $\{ (m)_Z ⊟ (n)_Z : m ⩾ n ⩾ 0 \}$ is
strictly contained in~$\bar{B}^*$.  Indeed, it only contains words with no
consecutive occurrences of either the digit~$1$ or the digit~$\bar{1}$.
This gives us some freedom in the construction of an automaton~$\mathcal{D}$
satisfying the statement of the previous corollary, and it explains why
in~$\mathcal{D}$, one cannot follow all possible words.  We give in
Figure~\ref{fig:autom-defect} an automaton with $5$ states computing the
defect $δ(m-n,n)$; the value of the output function~$τ$ appears inside the
states.

\begin{figure}[htbp]
  \begin{center}
  \begin{tikzpicture}[initial text=,->,>=stealth',semithick,auto,inner sep=2pt]
  \tikzstyle{every state}=[minimum size=20]
  \node[state with output,initial] (q0) at (-2.5,2.5) {$q_0$ \nodepart{lower} $\tcr{0}$};
  \node[state with output] (q1) at (0,2.5) {$q_1$ \nodepart{lower} $\tcr{0}$};
  \node[state with output] (q2) at (2.5,2.5) {$q_2$ \nodepart{lower} $\tcr{0}$};
  \node[state with output] (q3) at (0,0) {$q_3$ \nodepart{lower} $\tcr{\bar{1}}$};
  \node[state with output] (q4) at (2.5,0) {$q_4$ \nodepart{lower} $\tcr{1}$};
  \path (q0) edge[out=120,in=60,loop] node {$0$} (q0);
  \path (q0) edge[bend left=10] node {$1$} (q1);
  \path (q1) edge[bend left=10] node {$\bar{1}$} (q3);
  \path (q1) edge[bend left=10] node {$0$} (q2);
  \path (q1) edge[out=120,in=60,loop] node {$1$} (q1);
  \path (q2) edge[out=120,in=60,loop] node {$\bar{1}$} (q2);
  \path (q2) edge[bend left=10] node {$0,1$} (q1);
  \path (q3) edge node {$\bar{1}$} (q2);
  \path (q3) edge[bend left=10] node {$0$} (q1);
  \path (q3) edge[bend left=10] node {$1$} (q4);
  \path (q4) edge[bend left=10] node {$\bar{1},0$} (q3);
  \path (q4) edge node {$1$} (q2);
  \end{tikzpicture}
  \end{center}
  \caption{An automaton computing $δ(m-n,n)$, given $(m)_Z ⊟ (n)_Z$ and $m
    ⩾ n ⩾ 0$.}
  \label{fig:autom-defect} 
\end{figure}

\subsection{Z-Mahler  equations and weighted automata}
\label{sec:operator}

The following definition of the operator~$Φ$ is exactly analogous to that
for base-$k$ defined in Section~\ref{sec:intro},
but it is based on the function~$ϕ$ rather than the function $n ↦ kn$.  Let
$R$ be a commutative ring.  The Z-Mahler operator $Φ: R⟦ x⟧ → R⟦ x⟧$ is
defined as follows.
\begin{equation} \label{eq:Phi-power}
  Φ\Bigl(∑_{n ⩾ 0}{f_nx^n}\Bigr) ≔ ∑_{n ⩾ 0}{f_nx^{ϕ(n)}}.
\end{equation}

Note that, unlike the $k$-Mahler case, we do not have $Φ(fg)=Φ(f)Φ(g)$ in 
general, as the function $ϕ$ is not linear.  Nevertheless, as $ϕ$ is linear
over integers with disjoint support, we have that $Φ(fg)=Φ(f)Φ(g)$ if for
each pair of non-zero coefficients $f_m≠ 0$ and $g_n≠ 0$, $m$ and $n$ have
disjoint support.

The equation 
\begin{align}\label{eq:Z-Mahler-equation}
  P(x,y) = ∑_{i=0}^dA_i(x)Φ^i(y) = 0,
\end{align}
with $A_i$ defined as in~\eqref{eq:coefficients}, is called a
\emph{Z-Mahler equation}, and if $f ∈ R⟦ x⟧$ satisfies
$∑_{i=0}^dA_i(x)Φ^i(f) = 0$, then it is called 
\emph{Z-Mahler}; we also
say that $f$ is a solution of the functional equation~$P=0$.  As in the case
of standard Mahler equations, $d$ is the \emph{exponent} of the equation
$P$, and the maximum degree $h$ of the polynomials $A_0(x), …, A_d(x)$ the
\emph{height} of~$P$.
Also, in analogy
to~\eqref{eq:condensed-recurrence-brut}, a solution $f(x)= ∑_{n ⩾ 0}{f_nx^n}$
to~\eqref{eq:Z-Mahler-equation} satisfies

\begin{align} \label{eq:condensed-recurrence-brut-Zeckendorf}
  f_n =    ∑_{ϕ^i(k)+j=n} α_{i,j}f_k \, .
\end{align}
and where here also we set $α_{i,j}=0$ for $i,j$ outside the bounds given
by the equation.

For example, the polynomial $f(x) = 1+x$ is the solution of the Z-Mahler
equation $(1+x^2)f(x) = (1+x)Φ(f(x))$ because $Φ(f(x)) = 1+x^2$, by
Definition~\eqref{eq:Phi-power} and Table~\eqref{fig:valuesofphi} The
following is a slightly less trivial example, inspired by
\cite[Proposition~1]{Becker-1994}.
 
\begin{example}\label{ex:number-of-representations}\normalfont
  We return to Example \ref{ex:preliminary}, whose automaton computes the
  number~$a_n$ of representations of~$n$ as a sum of distinct Fibonacci
  numbers.  Consider the series $f(x) ≔ ∑_{n⩾0}a_nx^n$; we also have
  $f(x) = \prod_{n ⩾ 0}(1+x^{F_n})$.  This series $f(x)$ is the solution of
  the equation $f(x) = (1+x)Φ(f(x))$.  It can be indeed verified that
  $Φ(∏_{n ⩾ 0}(1+z^{F_n})) = ∏_{n ⩾ 0}(1+z^{F_{n+1}}) = ∏_{n ⩾
    1}(1+z^{F_n})$ because the terms of the product have disjoint
  exponents.
\end{example}

It is not a coincidence that the solution in
Example~\ref{ex:number-of-representations} can be computed using a weighted
automaton, as we will see next.

\subsubsection{From isolating Z-Mahler equations to weighted automata}

As in the case of $k$-Mahler equations, a Z-Mahler equation $P(x,y)$ is
\emph{isolating} if $A_0 = 1$, i.e., $P(x,y) =y- ∑_{i=1}^dA_i(x)Φ^i(y)$.
The aim of this section is to show that any solution of an isolating
Z-Mahler equation is Z-regular, i.e., if $f(x) = ∑_{n ⩾ 0}f_nx^n$ is
the solution of~$P$ with initial condition~$f_0$, then there exists a
weighted automaton~$\mathcal{A}$ such that $f_n = \weight_{\mathcal{A}}((n)_Z)$ for each
integer $n ⩾ 0$.

The construction of the weighted automaton is similar to the one we gave in
Section~\ref{sec:equation-to-automaton} for $k$-Mahler equations, but it is
more involved due to the non-linearity of the function~$ϕ$.  

Now we describe the weighted automaton~$\mathcal{A}$ computing the solution of an
isolating Z-Mahler equation.  Recall that $B = \{0, 1\}$, $\bar{B} =
\{\bar{1}, 0, 1\}$ and $C=\{\bar{1}, 0, 1, 2\}$.  Let $\mathcal{D} = 
⟨ Q, \bar{B}, Γ,\{q_0\},τ ⟩$ be a deterministic automaton given by
Corollary~\ref{cor:autom-defect}, with $τ : Q → \bar{B}$ such that
$τ(Γ(q_0, (m)_Z ⊟ (n)_Z)) = δ(m-n,n)$ for all integers $m ⩾ n ⩾ 0$; it can
be assumed that $Γ(q_0,0) = q_0$.

We now come to the definition of the weighted automaton associated to a
Z-Mahler equation of height $h$ and exponent $d$. Let
$P(x,y) = ∑_{i=0}^dA_i(x)Φ^i(y) = 0$ be an isolating Z-Mahler equation with
$A_i(x) = ∑_{j=0}^h α_{i,j}x^j$ for $1 ⩽ i ⩽ d$.  Set
\begin{displaymath}
    \tilde{h} ≔ \left⌊\frac{h+3-φ}{φ-1}\right⌋
    \quad\text{and}\quad
    g ≔ |(\tilde{h})_Z|.
\end{displaymath}
We define the state set~$S$ as 
\begin{displaymath}
  S ≔ \{ s_{i,j,q,u} : 0 ⩽ i ⩽ d, 0 ⩽ j ⩽ \tilde{h},
                                q ∈ Q \text{ and } u ∈ B^g  \}.
\end{displaymath}
 Define the function $\hat{δ}: S → \bar{B}$  by
\begin{displaymath}
  \hat{δ}(s_{i,j,q,u}) = τ(Γ(q, u ⊟ (j)_Z)).
\end{displaymath}
Note that if $q\neq q_0$, the automaton in Figure~\ref{fig:autom-defect}
can be fed with any word in $\bar{B}^*$, i.e., $τ(Γ(q, u ⊟ (j)_Z))$ is
well defined.  If $q=q_0$, we will see in the second statement of
Lemma~\ref{lem:delta-hat-well-defined} that then we will only be
concerned with states $s_{i,j,q_0,u}$ where $[u]_Z ⩾ (j)_Z$ in which case
$\hat{δ}$ is also well defined. Define the transition set~$Δ$ as follows.
\begin{align*}
  Δ  ≔ {} & \left\{ s_{i,j,q,au} \trans{b:1}
                     s_{i+1,ℓ,Γ(q,a),ub} :
                     \begin{array}{c}
                       0 ⩽ i ⩽ d-1, \\
                       0 ⩽ j ⩽ \tilde{h} \\
                       0 ⩽ ℓ = ϕ(j)+\hat{δ}(s_{i,j,q,au})+b ⩽ \tilde{h} \\
                       a,b ∈  B, u ∈ B^{g-1}
                     \end{array}
            \right\} \\
   {} ∪ {} & \left\{ s_{i,j,q,au} \trans{b:α_{i+1,ℓ-k}}
                     s_{0,k,Γ(q,a),ub} :
                     \begin{array}{c}
                       0 ⩽ i ⩽ d-1 \\
                       0 ⩽ j,k ⩽ \tilde{h} \\
                       0 ⩽ ℓ = ϕ(j)+\hat{δ}(s_{i,j,q,au})+b \\
                       0 ⩽ ℓ-k⩽ h \\
                       a,b ∈  B, u ∈ B^{g-1} 
                     \end{array}
              \right\}.
\end{align*}
We set the initial and final weights $I$ and $F$ as
\begin{align*}
  I(s_{i,j,q,u}) & ≔
  \begin{cases}
    f_0 & \text{if $j = 0$, $q = q_0$ and $u = 0^g$,} \\
    0   & \text{otherwise.}
  \end{cases} \\
  F(s_{i,j,q,u}) & ≔
  \begin{cases}
    1 & \text{if $i = 0$ and $j = 0$,} \\
    0 & \text{otherwise.}
  \end{cases}
\end{align*}

We call the automaton $\mathcal{A}≔ ⟨S,A,Δ,I,F⟩$ the \emph{weighted automaton
  associated to $P$ and~$f_0$}, $\mathcal{A}= \mathcal{A}(P,f_0)$.

The first two components $i$ and~$j$ of a state~$s_{i,j,q,u}$
in~$\mathcal{A}$ play the same role as in the base-$k$ case. We will show
how the last two components are needed to track the linearity defect.  The
following lemma describes the evolution of the third and fourth components
of the states along a path.

\begin{lemma} \label{lem:movement}
  Let $s_{i',j',q',u'} \trans{w:r} s_{i,j,q,u}$ be a path in the
  automaton~$\mathcal{A}(P,f_0)$. Then $u$ is the suffix of length~$g$
  of~$u'w$ and $q = Γ(q',v)$ where $vu = u'w$, that is, $v$ is the prefix
  of length~$|w|$ of~$u'w$.
\end{lemma}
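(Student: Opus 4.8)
The plan is to prove the lemma by induction on the length $n = |w|$ of the label of the path, peeling off the \emph{last} transition at each step. The one observation that makes everything work, and which can be read straight off the two families of transitions in the definition of $\Delta$, is a shift-register property: every transition reading a letter $b \in B$ out of a state of the form $s_{i,j,p,au}$ (with $a \in B$ and $u \in B^{g-1}$, which is legitimate since $g = |(\tilde{h})_Z| \geq 1$, so a leftmost letter $a$ always exists) goes to a state of the form $s_{\ast,\ast,\Gamma(p,a),ub}$. In words: reading one letter drops the most significant letter $a$ of the fourth component, feeds it to the defect automaton $\mathcal{D}$ through $\Gamma$, and appends on the right the letter $b$ just read. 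I would state this as a preliminary remark and then the induction is essentially forced.

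For the base case $n = 0$ the path consists of a single state, so $(i,j,q,u) = (i',j',q',u')$; then $u'w = u' = u$ is trivially its own suffix of length $g$, the prefix of $u'w$ of length $|w| = 0$ is $v = \varepsilon$, and $\Gamma(q',\varepsilon) = q' = q$ since $\mathcal{D}$ is deterministic. Both assertions hold.

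For the inductive step, write $w = w'b$ with $b \in B$ and decompose the path as
\begin{displaymath}
  s_{i',j',q',u'} \trans{w':r'} s_{\hat{i},\hat{j},\hat{q},\hat{u}} \trans{b:r''} s_{i,j,q,u}.
\end{displaymath}
The inductive hypothesis applied to the sub-path labelled $w'$ gives that $\hat{u}$ is the suffix of length $g$ of $u'w'$ and that $\hat{q} = \Gamma(q',\hat{v})$, where $\hat{v}\hat{u} = u'w'$ and $|\hat{v}| = |w'|$. Writing $\hat{u} = a\tilde{u}$ with $a \in B$ and $\tilde{u} \in B^{g-1}$, the shift-register property applied to the last transition yields $u = \tilde{u}b$ and $q = \Gamma(\hat{q},a) = \Gamma(q',\hat{v}a)$. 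It then only remains to match these against the claim: since $\hat{u}$ is the suffix of length $g$ of $u'w'$, its last $g-1$ letters $\tilde{u}$ form the suffix of length $g-1$ of $u'w'$, so $u = \tilde{u}b$ is the suffix of length $g$ of $u'w'b = u'w$; and setting $v := \hat{v}a$ we get $vu = \hat{v}a\tilde{u}b = u'w'b = u'w$ with $|v| = |\hat{v}|+1 = |w|$, so $v$ is indeed the prefix of length $|w|$ of $u'w$ and $q = \Gamma(q',v)$. This closes the induction.

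I do not expect a genuine obstacle: the statement is pure bookkeeping about how $\mathcal{A}(P,f_0)$ threads its input through $\mathcal{D}$. The only point requiring care is getting the shift-register convention exactly right — namely that the letter fed to $\mathcal{D}$ at a given step is the current \emph{most significant} letter of the fourth component, and \emph{not} the letter just read — together with the harmless remark that $g \geq 1$ so that letter always exists. Once the preliminary shift-register observation is in place, the rest is a two-line induction.
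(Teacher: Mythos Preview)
Your proof is correct and takes essentially the same approach as the paper: induction on $|w|$, with the inductive step following directly from the shift-register behaviour of the transition set $\Delta$. The paper's own proof is a one-sentence remark that the result is a straightforward induction from the definition of $\Delta$; you have simply written that induction out in full, including the harmless check that $g \geq 1$.
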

\begin{proof}
  The proof is a straightforward induction on the length of~$w$.  The base
  case $|w| = 0$ is trivial and the induction step follows directly from
  the definition of the transition set~$Δ$.
\end{proof}

Rephrasing Lemma~\ref{lem:movement} when $q'$ is the initial state~$q_0$
of~$\mathcal{D}$ and $u' = 0^g$ gives Corollary~\ref{cor:movement}. In particular, it
tells us that if $s_{i,j,q,u}$ is accessible from a state with non-zero
initial weight, then,  since these states  have $0^g$ as their last component, the word~$u$ must be a suffix of~$0^g w$.  This implies that $u$ cannot have consecutive occurrences of the digit~$1$.
We have not taken this into account, in the definition of~$S$, in order to
simplify the definition of~$\mathcal{A}$. However this fact is used to bound the
number of states of~$\mathcal{A}$ in Lemma~\ref{lem:state-count}.
\begin{corollary} \label{cor:movement}
  Let $s_{i',j',q_0,0^g} \trans{w:r} s_{i,j,q,u}$ be a path in the
  automaton~$\mathcal{A}(P,f_0)$. Then $u$ is the suffix of length~$g$ of~$0^gw$
  and $q = Γ(q_0,v)$ where $vu = 0^gw$, that is, $v$ is the prefix
  of length~$|w|$ of~$0^gw$.
\end{corollary}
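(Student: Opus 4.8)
The plan is to obtain the corollary as the special case of Lemma~\ref{lem:movement} in which the source state is one of those that may carry a non-zero initial weight. Concretely, I would instantiate Lemma~\ref{lem:movement} with $q' = q_0$ and $u' = 0^g$. This is legitimate: for any $0 \le i' \le d$ and $0 \le j' \le \tilde{h}$ the tuple $s_{i',j',q_0,0^g}$ is a genuine state of $\mathcal{A}(P,f_0)$, and by the definition of $I$ these are precisely the states on which the initial weight can be non-zero.

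Feeding this substitution into Lemma~\ref{lem:movement}, one has $u'w = 0^g w$, a word of length $g + |w|$. The lemma then asserts that $u$ is the suffix of $0^g w$ of length $g$ and that $q = \Gamma(q',v) = \Gamma(q_0,v)$, where $v$ is the prefix of $0^g w$ of length $|w|$ and $vu = 0^g w$. This is verbatim the statement to be proved, so nothing further is required.

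I do not foresee any genuine obstacle here; the argument is pure bookkeeping. The one point to keep straight is the arithmetic of lengths, namely $g + |w| = |w| + g$, so that the ``prefix of length $|w|$'' and the ``suffix of length $g$'' really do tile $0^g w$. It is also worth recording, for later use, that when $|w| \ge g$ this suffix of $0^g w$ coincides with the suffix of $w$ itself, so that $u$ is then a genuine suffix of the input word and in particular has no two consecutive occurrences of $1$; and that in the degenerate range $|w| < g$ one has $v \in 0^{*}$, whence $q = \Gamma(q_0,v) = q_0$ by the normalisation $\Gamma(q_0,0) = q_0$, consistently with the general formula. If one preferred a self-contained proof to a mere citation, one could instead rerun the induction on $|w|$ from the proof of Lemma~\ref{lem:movement} with $q' = q_0$ and $u' = 0^g$ fixed throughout, but invoking the lemma directly is cleaner.
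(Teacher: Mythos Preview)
Your proposal is correct and matches the paper's approach exactly: the paper simply states that rephrasing Lemma~\ref{lem:movement} with $q' = q_0$ and $u' = 0^g$ yields the corollary. One minor aside: the states with non-zero initial weight actually also require $j' = 0$, not just $q' = q_0$ and $u' = 0^g$, but this is irrelevant to the argument since the corollary as stated allows arbitrary~$j'$.
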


The following lemma states that the function~$\hat{δ}$ indeed tracks the
linearity defect, starting from the initial state on input of any word $w$,
as a function of the state where it arrives. Note that the proof of this
lemma also justifies the definition of $g$.

\begin{lemma}\label{lem:delta-hat-well-defined} 
  Let $s_{i',0,q_0,0^g} \trans{w:r} s_{i,j,q,u}$ be a path in the
  automaton~$\mathcal{A}(P,f_0)$.
  \begin{itemize}\itemsep0cm
  \item[a)] There exists $k ⩾ 0$ such that $ϕ^i(k) + j = [w]_Z$ and thus
    $j ⩽ [w]_Z$.
  \item[b)] The function $\hat{δ}(s_{i,j,q,u})$ is well-defined and
    $\hat{δ}(s_{i,j,q,u}) = δ([w]_Z-j,j)$.
  \end{itemize}
\end{lemma}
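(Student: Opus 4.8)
The plan is to prove items a) and b) simultaneously by induction on the length of~$w$, using a) as the main inductive invariant and reading off b) at each stage from a) together with Corollary~\ref{cor:movement}. For $w = \varepsilon$ the path is trivial, so $s_{i,j,q,u} = s_{i',0,q_0,0^g}$; then a) holds with $k = 0$ since $\phi^{i'}(0) = 0 = [\varepsilon]_Z$, while for b) one has $\hat{\delta}(s_{i',0,q_0,0^g}) = \tau(\Gamma(q_0, 0^g \boxminus (0)_Z)) = \tau(\Gamma(q_0,0^g)) = \tau(q_0)$ using $\Gamma(q_0,0) = q_0$, and $\tau(q_0) = \delta(0,0) = 0$ by the defining property of~$\mathcal{D}$ applied with $m = n = 0$.

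For the inductive step, factor the path as $s_{i',0,q_0,0^g} \trans{w':r'} s_{p,\jmath,\kappa,\upsilon} \trans{b:r''} s_{i,j,q,u}$ with $w = w'b$ and $\upsilon = a\upsilon'$, $a \in B$, $\upsilon' \in B^{g-1}$, so that $q = \Gamma(\kappa,a)$ and $u = \upsilon'b$, and, writing $\ell := \phi(\jmath) + \hat{\delta}(s_{p,\jmath,\kappa,\upsilon}) + b$, either $i = p+1$ and $j = \ell$ (a transition of the first kind) or $i = 0$ and $j = k$ with $0 \le \ell - k \le h$ (a transition of the second kind). The induction hypothesis applied to the sub-path of length~$|w'|$ gives $k' \ge 0$ with $\phi^{p}(k') + \jmath = [w']_Z$ and $\hat{\delta}(s_{p,\jmath,\kappa,\upsilon}) = \delta([w']_Z - \jmath, \jmath) = \delta(\phi^{p}(k'), \jmath)$. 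Substituting, and using that $\delta(m,n) = \phi(m+n) - \phi(m) - \phi(n)$ by definition together with $[w'b]_Z = \phi([w']_Z) + b$, the correction term telescopes:
\[
  \ell = \phi(\jmath) + \delta(\phi^{p}(k'), \jmath) + b = \phi\bigl(\phi^{p}(k') + \jmath\bigr) - \phi^{p+1}(k') + b = [w]_Z - \phi^{p+1}(k').
\]
Item a) for $s_{i,j,q,u}$ now follows: for a transition of the first kind $\phi^{i}(k') + j = \phi^{p+1}(k') + \ell = [w]_Z$, so take $k = k'$; for one of the second kind $[w]_Z - j = \phi^{p+1}(k') + (\ell - k) \ge 0$, so take $k = [w]_Z - j$; and $j \le [w]_Z$ in both cases since $\phi^{i}(k) \ge 0$.

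To deduce b) for $s_{i,j,q,u}$ from a), invoke Corollary~\ref{cor:movement}: $u$ is the length-$g$ suffix of $0^g w$, and $q = \Gamma(q_0, v)$ where $v \in B^*$ is the prefix of $0^g w$ of length~$|w|$, so $vu = 0^g w$. Since $\mathcal{D}$ reads any word over $B = \{0,1\}$ without blocking, determinism gives $\Gamma(q, u \boxminus (j)_Z) = \Gamma(q_0, v(u \boxminus (j)_Z)) = \Gamma\bigl(q_0, (0^g w) \boxminus (0^{|w|}(j)_Z)\bigr)$, where $(j)_Z$ is padded to length~$g$ on the right; this word has $Z$-value $[w]_Z - j$, which is $\ge 0$ by a). As runs of~$\mathcal{D}$ from~$q_0$ are insensitive to leading zeros, this word agrees, up to leading zeros, with $([w]_Z)_Z \boxminus (j)_Z$, a difference of canonical Zeckendorf expansions of $m := [w]_Z$ and $n := j$ with $m \ge n \ge 0$. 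Hence by the property of~$\mathcal{D}$ in Corollary~\ref{cor:autom-defect} the run does not block --- so $\hat{\delta}(s_{i,j,q,u})$ is well-defined, which in particular settles the case $q = q_0$, where $[u]_Z = [w]_Z \ge j$ --- and $\hat{\delta}(s_{i,j,q,u}) = \tau\bigl(\Gamma(q_0, ([w]_Z)_Z \boxminus (j)_Z)\bigr) = \delta([w]_Z - j, j)$.

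The main obstacle is this last step: one must make sure that the word eventually fed into~$\mathcal{D}$ is genuinely a difference $(m)_Z \boxminus (n)_Z$ with $m \ge n \ge 0$, since $\mathcal{D}$ is only guaranteed to compute correctly on such inputs --- it is a partial automaton, as observed after Corollary~\ref{cor:autom-defect}. This is exactly where one uses both the bound $j \le [w]_Z$ coming from a) and the fact that~$w$ is a canonical Zeckendorf word, and it is what legitimises the definition of~$\hat{\delta}$ at states whose third component is~$q_0$. By contrast, the telescoping in the inductive step for a) is routine once the identity $\phi(m+n) = \phi(m) + \phi(n) + \delta(m,n)$ is in hand; the one thing worth re-checking there is that the transition constraints $0 \le \ell \le \tilde{h}$ and $0 \le \ell - k \le h$ are precisely what force the chosen~$k$ to be nonnegative, which indeed they are.
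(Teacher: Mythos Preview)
Your proof is correct and follows essentially the same approach as the paper's: simultaneous induction on~$|w|$, with the telescoping identity $\phi(\jmath) + \delta(\phi^{p}(k'),\jmath) = \phi(\phi^{p}(k')+\jmath) - \phi^{p+1}(k')$ giving part~a), and Corollary~\ref{cor:movement} together with Corollary~\ref{cor:autom-defect} giving part~b). You are in fact a bit more explicit than the paper in noting that the argument for~b) requires $w$ to have no consecutive~$1$s (so that $0^g w$ is the zero-padded canonical expansion of~$[w]_Z$ and the word fed to~$\mathcal{D}$ really is of the form $(m)_Z \boxminus (n)_Z$); the paper uses this tacitly when it asserts that the most significant digit of $u \boxminus (j)_Z$ is~$1$.
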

\begin{proof}
  The two statements are simultaneously proved by induction on the length
  of the word~$w$.  If the word~$w$ is empty,  then  $j = j'=0$, we can take $k=0$, and 
  $\hat{δ}(s_{i,j,q,u}) = \hat{δ}(s_{i',0,q_0,0^g}) = τ(Γ(q_0, 0^g ⊟ (0)_Z)) =δ(0,0) $ by Corollary~\ref{cor:autom-defect}.

  Now we suppose that $w = w'b$ where $b$ is the last digit of~$w$.  The
  path $s_{i',0,q_0,0^g} \trans{w:r} s_{i,j,q,u}$ can then be decomposed
  \begin{displaymath}
    s_{i',0,q_0,0^g}  \trans{w':r'} s_{i'',j'',q',u'} \trans{b:r''} s_{i,j,q,u}.
  \end{displaymath}
  For the first statement, we distinguish two cases depending on the form
  of the last transition $s_{i'',j'',q',u'} \trans{b:r''} s_{i,j,q,u}$.
  
  We first suppose that this last transition of the path is a transition of
  the form $s_{i'',j'',q',u'} \trans{b:1} s_{i,j,q,u}$ where $i = i''+1$ and
  $j = ϕ(j'') + \hat{δ}(s_{i'',j'',q',u'})+b$.  By the induction hypothesis,
  $\hat{δ}(s_{i'',j'',q',u'}) = δ([w']_Z-j'',j'')$ and there exists an integer
  $k ⩾ 0$ such that $ϕ^{i''}(k) + j'' = [w']_Z$.  Applying $ϕ$ to the equality
  $ϕ^{i''}(k) = [w']_Z -j''$ yields $ϕ^i(k) = ϕ([w']_Z-j'') =
  ϕ([w']_Z)-ϕ(j'')-δ([w']_Z-j'',j'') = ϕ([w']_Z)+b-j = [w]_Z-j$.
  
  Next, we suppose that this last transition of the path is a transition of
  the form $s_{i'',j'',q',u'} \trans{b:α_{i''+1,ℓ-j}} s_{0,j,q,u}$ where
  $ℓ = ϕ(j'') + \hat{δ}(s_{i'',j'',q',u'})+b$.  By definition of the
  transitions, $0 ⩽ ℓ-j$, and we have $j ⩽ ℓ$ and, by the inductive
  hypothesis, \[ℓ = ϕ(j'') + \hat{δ}(s_{i'',j'',q',u'})+b =
  ϕ(j'') + δ([w']_W-j'',j''))+b = ϕ([w']_Z)-ϕ([w']_Z-j'')+b,\] and thus
  $j ⩽ ℓ ⩽ ϕ([w']_Z)+b = [w]_Z$.  Since $i = 0$, $ϕ^i(k) + j = [w]_Z$ where
  $k = [w]_Z-j$.  This completes the proof of the first statement.

  The only missing transition in the automaton pictured in
  Figure~\ref{fig:autom-defect} is the transition with label~$\bar{1}$
  leaving state~$q_0$. If the state $q$ in $s_{i,j,q,u}$ is equal to~$q_0$,
  then $[u]_Z = [w]_Z$, since if the state is still $q_0$, only $0$ has
  been fed to the automaton.  This means that the word~$v$ in the statement
  of Corollary~\ref{cor:movement} is a block of zeros.  and therefore
  $j ⩽ [u]_Z$ and the most significant digit of $u ⊟ (j)_Z$ must be~$1$.
  This shows that $\hat{δ}(s_{i,j,q,u}) = τ(Γ(q, u ⊟ (j)_Z))$ is always
  well-defined, since in the automaton $\mathcal{D}$, once one leaves $q_0$,
  there is no return.
  
  Let $m$ be the length of~$w = w'b$.  By Corollary~\ref{cor:movement}, one
  has $q = Γ(q_0,v)$ where $vu = 0^gw$ and $|v| = m$.  This can be
  rewritten $q = Γ(q_0,v ⊟ 0^{m})$.  Recall that the function~$\hat{δ}$ is
  defined by $\hat{δ}(s_{i,j,q,u}) = τ(Γ(q, u ⊟ (j)_Z))$.  Since by
  definition of the states, $j⩽ \tilde{h}$, so $|(j)_Z| ⩽ g$, and $|u|=g$,
  then by definition of $\mathcal{D}$,
  $Γ(q, u ⊟ (j)_Z) = Γ(q_0,vu ⊟ 0^m 0^{g-|(j)_Z|}(j)_Z) = Γ(q_0, w ⊟
  0^{m-|(j)_Z| }(j)_Z)$.  It follows that
  $\hat{δ}(s_{i,j,q,u}) = δ([w]-j,j)$.
\end{proof}

The following lemma is used in the proof of Proposition~\ref{pro:key-Z} below.
\begin{lemma}\label{lem:essentially-one-path}
  Let $s_{i'',0,q_0,0^g} \trans{w':r'} s_{i',j',q',au} \trans{b:r}
  s_{i,j,q,ub}$ be a path in the automaton~$\mathcal{A}(P,f_0)$ where $w'$
  is a word and $b$ is a digit. If $i ⩾ 1$ then the state $s_{i',j',q',au}$
  is unique and does not depend on $i''$.
\end{lemma}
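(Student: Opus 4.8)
The plan is to show that, when $i \geq 1$, every component of the state $s_{i',j',q',au}$ is forced by the target state $s_{i,j,q,ub}$ together with the label word $w := w'b$, with no reference whatsoever to the layer~$i''$ of the initial state; this is exactly the asserted uniqueness. First I would use the shape of the transition set~$\Delta$: every transition of its second kind ends in a state whose first index is~$0$, whereas a transition of its first kind increases the first index by one. Since $i \geq 1$, the last transition $s_{i',j',q',au} \trans{b:r} s_{i,j,q,ub}$ must therefore be of the first kind, which immediately gives $r = 1$, $i' = i - 1$, $q = \Gamma(q',a)$ and
\begin{equation}\label{eq:last-step-j}
  j = \phi(j') + \hat{\delta}(s_{i',j',q',au}) + b .
\end{equation}
Moreover, by construction $u$ is just the length-$(g-1)$ prefix of the fourth component $ub$ of the target, so $u$ is already determined.

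Next I would pin down $a$ and $q'$ using Corollary~\ref{cor:movement}, applied to the prefix subpath $s_{i'',0,q_0,0^g} \trans{w':r'} s_{i',j',q',au}$: it tells us that $au$ is the suffix of length~$g$ of $0^g w'$ and that $q' = \Gamma(q_0,v')$, where $v'$ is the prefix of length~$|w'|$ of $0^g w'$. Hence $a$ (the first letter of that length-$g$ suffix) and $q'$ depend only on~$w'$, and thus only on~$w$, not on~$i''$.

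The one remaining point, and the crux of the argument, is to determine $j'$. Here I would invoke Lemma~\ref{lem:delta-hat-well-defined}, again for the prefix subpath above: it yields $j' \leq [w']_Z$ and, writing out $\delta(m,n) = \phi(m+n)-\phi(m)-\phi(n)$,
\begin{equation}\label{eq:delta-hat-value}
  \hat{\delta}(s_{i',j',q',au}) = \delta\bigl([w']_Z - j',\, j'\bigr) = \phi([w']_Z) - \phi\bigl([w']_Z - j'\bigr) - \phi(j') .
\end{equation}
Substituting \eqref{eq:delta-hat-value} into \eqref{eq:last-step-j} and cancelling $\phi(j')$ gives
\begin{equation}\label{eq:phi-inversion}
  \phi\bigl([w']_Z - j'\bigr) = \phi([w']_Z) + b - j ,
\end{equation}
whose right-hand side depends only on $w'$, $b$ and~$j$. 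By Lemma~\ref{lem:floor-Cartier}, $\phi(n) = \lfloor \varphi n + \varphi - 1\rfloor$, so $\phi$ is strictly increasing, hence injective on~$\mathbb{N}$; since $[w']_Z - j' \geq 0$, equation \eqref{eq:phi-inversion} determines the integer $[w']_Z - j'$, and therefore $j'$, uniquely. Altogether $i'$, $j'$, $q'$, $a$ and $u$ are functions of $s_{i,j,q,ub}$ and~$w$ only, so the intermediate state is unique and independent of~$i''$.

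I expect the last paragraph to be the main obstacle: the transition constraint \eqref{eq:last-step-j} involves the state-dependent defect $\hat{\delta}(s_{i',j',q',au})$, and the real work is in converting it, via Lemma~\ref{lem:delta-hat-well-defined}, into the global identity \eqref{eq:phi-inversion}, which can then be inverted using injectivity of~$\phi$. Everything else is bookkeeping with Corollary~\ref{cor:movement} and the two kinds of transitions in~$\Delta$.
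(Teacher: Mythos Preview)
Your proof is correct and follows essentially the same route as the paper's: identify the transition type from $i\geq 1$, use Corollary~\ref{cor:movement} to fix $au$ and $q'$, convert $\hat{\delta}$ to $\delta$ via Lemma~\ref{lem:delta-hat-well-defined}, rewrite the constraint as $\phi([w']_Z-j')=\phi([w']_Z)+b-j$, and then invoke injectivity of~$\phi$. The only cosmetic difference is that you justify injectivity of~$\phi$ through the floor formula of Lemma~\ref{lem:floor-Cartier}, whereas the paper simply asserts that $\phi$ is one-to-one.
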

\begin{proof}
  By Corollary~\ref{cor:movement}, $au$ must be the suffix of length~$g$ of
  $0^gw'$ and the state~$q'$ is given by $q' = Γ(q_0, v)$ where $vau =
  0^gw'$.  Since $i ⩾ 1$, the last transition must be  of the form
  $s_{i',j',q',au} \trans{b:1} s_{i,j,q,ub}$ where $i = i'+1$ and $j =
  ϕ(j') + \hat{δ}(s_{i',j',q',u'})+b$.  This shows that $i' = i-1$ and that
  $r=1$.  It remains to show that $j$ is also unique.  By
  Lemma~\ref{lem:delta-hat-well-defined}, $\hat{δ}(s_{i',j',q',u'})$ is
  equal to $δ([w']_Z-j',j')$ and the equality $j = ϕ(j') +
  \hat{δ}(s_{i',j',q',u'})+b$ can be rewritten $ϕ([w']_Z-j') = ϕ([w']_Z )+b-j$.
  Since the function~$ϕ$ is one-to-one, there is at most one integer~$j'$
  satisfying this equality. Note that this proof does not depend on $i''$,
  so the statement follows. 
\end{proof}

We bring this together now to obtain the following version of
Proposition~\ref{prop:key}.
\begin{proposition} \label{pro:key-Z}
  Let $R$ be a commutative ring, let $P(x,y)∈ R[x,y]$ be an isolating
  Z-Mahler equation of exponent~$d$ and height~$h$, and let $f = ∑_{n ⩾
    0}f_n x^n$ be a solution of $P=0$.  If $\mathcal{A}(P,f_0)$ is the weighted
  automaton associated to $P(x,y)$ and $f_0 ∈ R$, and if $w ∈ \{0,1\}^*$
  has no consecutive occurrences of the digit~$1$, then
  \begin{displaymath}
    \weight_{\mathcal{A},s_{i,j, q, u}}^{*}(w) =
    \begin{cases}
      f_{k} & \text{  whenever } 
        \begin{cases} 
          ϕ^i(k)+j=[w]_Z ,\\
          q = Γ(q_0,w), \text{ and }  \\
          u \text{ is the suffix of } 0^gw
      \end{cases}  \\
      0      & \text{otherwise.}
    \end{cases}
  \end{displaymath}
\end{proposition}
Note that $i$, $j$ and~$w$ being given, there exists at most one
integer~$k$ that satisfies the equation $ϕ^i(k)+j=[w]_Z$, as the
function~$ϕ$ is one-to-one.  This means that the integer~$k$ implicitly
given by $ϕ^i(k)+j=[w]_Z$ is well-defined.

\begin{proof}
  The proof is by induction on the length of the word~$w$.  If $w$ is the
  empty word~$ε$, then $\weight_{\mathcal{A},s_{i,j,q,u}}^*(ε) = I(s_{i,j,q,u})$;
  this equals~$f_0$ if $j= 0$, $q=q_0$ and $u=0^g$, and equals~$0$
  otherwise, so the claim is proved for $w = ε$ since $[ε]_Z = 0$.
  
  For $|w| > 0$, we consider first the easier case of a state
  $s_{i,j, q, ub}$ with $i ⩾ 1 $. We are only concerned with states, and
  transitions into these states, that are part of a path starting at an
  initial state.  By Lemma~\ref{lem:essentially-one-path}, there is a
  unique such transition
  $s_{i_0,0,q_0,0^g} \trans{w:r}s_{i-1,j',q',au} \trans{b:1} s_{i,j,q,ub}
  $. Here
  $j = ϕ(j')+\hat{δ}(s_{i-1,j',q',au})+b = ϕ(j') + δ([w]_Z-j',j')+b$, by
  Lemma~\ref{lem:delta-hat-well-defined}. Then by the inductive hypothesis,
  \begin{displaymath}
    \weight_{\mathcal{A},s_{i,j, q, ub}}^{*}(wb)  =
    \weight_{\mathcal{A},s_{i-1,j', q', au}}^{*}(w) = f_k,
  \end{displaymath}
  where $\phi^{i-1}(k)+j'=[w]_Z$. Applying $ϕ$ to this last equality, we obtain
  \begin{align*} 
    \phi^{i}(k) & = \phi ([w]_Z - j') \\ 
                & = \phi ([w]_Z) -\phi( j') - \delta([w]_Z-j',j')\\ 
                & = [wb]_Z -b-\phi( j') - \delta([w]_Z-j',j')\\
                & = [wb]_Z -j,
  \end{align*}
  from which the statement of the proposition follows for the case $i>0$.
   
  Now we consider the case case $i = 0$, so that our state is $s_{0,j, q,
    ub}$.  As the transitions that concern us are of the form
  $s_{i',j',q',au} \trans{b:α_{i'+1,ℓ'-j}} s_{0,j,q,ub}$, henceforth we
  will implicitly only sum over states $s_{i',j',q',au}$ which satisfy
  this.  Also, as we are only concerned with paths that commence at an
  initial state, we have by Lemma~\ref{lem:delta-hat-well-defined} that
  $ℓ' = \phi(j')+δ([w]_Z-j',j')+b$.  Thus
  \begin{align*}
    \weight_{\mathcal{A},s_{0,j,q,ub}}^*(wb) & =
      ∑_{i',j',q',a}α_{i'+1,ℓ'-j}\weight_{\mathcal{A},s_{i',j',q',au}}^*(w) \\
          & = ∑_{ϕ^{i'}(k)+j'=[w]_Z}α_{i'+1,ℓ'-j}f_k
  \end{align*}
  where to obtain the last equality we have applied the inductive hypothesis.
  Since $ϕ$ is one-to-one, the equality $ϕ^{i'}(k)=[w]_Z-j'$ is equivalent
  to the equality 
  \begin{align*}ϕ^{i'+1}(k)=ϕ([w]_Z-j') &= ϕ([w]_Z)-ϕ(j')-δ([w]_Z-j',j')\\
  & = ϕ([w]_Z)-ℓ'+b.
  \end{align*}
     Setting $ℓ = ℓ'-j$, we obtain $ϕ^{i'+1}(k)+ℓ = [wb]_Z-j$ and thus
  \begin{align*}
    \weight_{\mathcal{A},s_{0,j,q,ub}}^*(wb) & =
      ∑_{ϕ^{i'+1}(k)+ℓ = [wb]_Z-j}α_{i'+1,ℓ}f_k \\
      & = ∑_{ϕ^i(k)+ℓ = [wb]_Z-j}α_{i,ℓ}f_k \\
      & = f_{[wb]_Z-j}
  \end{align*}
  where in the penultimate line we set $i=i'+1$, and where we used
  Equation~\eqref{eq:condensed-recurrence-brut-Zeckendorf} to get to the last
  line.
\end{proof}

The following theorem states that the automaton~$\mathcal{A}(P,f_0)$
defined above indeed computes the unique solution of the equation
$P(x,f(x))$ satisfying $f(0) = f_0$.
\begin{theorem} \label{thm:main-Z} Let $R$ be a commutative ring, and let
  $P(x,y) ∈ R[x,y]$ be an isolating Z-Mahler equation of exponent~$d$ and
  height~$h$.  Then the automaton $\mathcal{A} = \mathcal{A}(P,f_0)$
  associated to $P$ and~$f_0$ satisfies
  \begin{displaymath}
    \weight_{\mathcal{A}}(w) = f_{[w]_Z}
  \end{displaymath}
  for any $w$, where $f = ∑_{n ⩾ 0}f_nx^n$ is a solution of $P(x,f(x))=0$.
  Consequently if $R$ is finite, then there exists a deterministic
  automaton, and a constant~$C$, depending only on~$φ$, with at most
  $|R|^{Cdh^2}$ states that generates~$f(x)$.
\end{theorem}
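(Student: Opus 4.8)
The plan is to deduce the identity $\weight_{\mathcal A}(w)=f_{[w]_Z}$ directly from Proposition~\ref{pro:key-Z}, and then, when $R$ is finite, to bound the number of states of $\mathcal A(P,f_0)$ and apply Proposition~\ref{prop:weighted-to-automatic}.

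For the identity, note that by the definition of the final weights of $\mathcal A=\mathcal A(P,f_0)$ one has $F(s)=1$ exactly for the states $s=s_{0,0,q,u}$ with $q\in Q$, $u\in B^g$, and $F(s)=0$ otherwise; hence, from the definitions of $\weight_{\mathcal A}$ and $\weight^{*}_{\mathcal A,s}$,
\[ \weight_{\mathcal A}(w)=\sum_{s\in S}\weight^{*}_{\mathcal A,s}(w)\,F(s)=\sum_{q\in Q,\ u\in B^g}\weight^{*}_{\mathcal A,s_{0,0,q,u}}(w). \]
If $w$ has no two consecutive $1$s — in particular if $w$ is a canonical Zeckendorf expansion, even padded with leading zeros — then Proposition~\ref{pro:key-Z} applied with $i=j=0$ shows that all summands on the right vanish except the one with $q=\Gamma(q_0,w)$ and $u$ the length-$g$ suffix of $0^gw$, and that this summand equals $f_k$, where the condition $\phi^i(k)+j=[w]_Z$ with $i=j=0$ forces $k=[w]_Z$. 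This gives $\weight_{\mathcal A}(w)=f_{[w]_Z}$, and taking $w=(n)_Z$ shows that $\mathcal A$ generates $f$.

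For the bound, suppose $R$ is finite; we may assume $d,h\ge 1$. I would first pass to the subautomaton $\mathcal A'$ obtained by deleting every state $s_{i,j,q,u}$ whose fourth coordinate $u$ contains two consecutive $1$s. By Corollary~\ref{cor:movement}, along any path starting at a state of non-zero initial weight the coordinate $u$ is a length-$g$ suffix of $0^g$ followed by a prefix of the input word, so as soon as the input avoids ``$11$'' so do all such suffixes; hence no deleted state lies on such a path, and therefore $\weight_{\mathcal A'}(w)=\weight_{\mathcal A}(w)$ for every input without ``$11$'', in particular for every $(n)_Z$. The automaton $\mathcal A'$ has at most $(d+1)(\tilde h+1)\,|Q|\,N_g$ states, where $N_g$ is the number of binary words of length $g$ avoiding ``$11$''. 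Since $\frac{1}{\varphi-1}=\varphi$, we get $\tilde h=\lfloor\varphi(h+3-\varphi)\rfloor=O(h)$; since $g=|(\tilde h)_Z|$ we have $F_{g-1}\le\tilde h$, so Binet's formula (as used in the proof of Lemma~\ref{lem:floor-Cartier}) gives $\varphi^{g}=O(\tilde h)=O(h)$; and $N_g$ satisfies the Fibonacci recurrence, so $N_g=O(\varphi^{g})=O(h)$. As $|Q|$ is a constant depending only on the Zeckendorf numeration, i.e.\ on $\varphi$, the automaton $\mathcal A'$ has at most $Cdh^2$ states for some constant $C=C(\varphi)$. Finally, $\mathcal A'$ is a weighted $2$-automaton over the finite ring $R$, so Proposition~\ref{prop:weighted-to-automatic} in direct reading yields a deterministic automaton with output whose state set is the set of all row vectors of dimension $1\times n$ over $R$, $n$ being the number of states of $\mathcal A'$, and which realises $w\mapsto\weight_{\mathcal A'}(w)$; this automaton has at most $|R|^{n}\le|R|^{Cdh^2}$ states and outputs $f_n$ on input $(n)_Z$, so it generates $f$.

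The step I expect to be the main obstacle is making the state count sharp enough. The trivial estimate $|B^g|=2^g$ is of order $h^{\log_\varphi 2}\approx h^{1.44}$, which would give an exponent of order $dh^{2.44}$ rather than $dh^2$; reaching $dh^2$ genuinely requires Corollary~\ref{cor:movement}, which lets one discard every state whose fourth coordinate contains ``$11$'' and thereby replace $2^g$ by the Fibonacci-type count $N_g=O(\varphi^{g})=O(h)$. Everything else — the passage through Proposition~\ref{pro:key-Z}, the trimming argument, and the Binet-formula estimates together with the identity $\varphi^{-1}=\varphi-1$ — is routine.
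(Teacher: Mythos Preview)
Your proof is correct and follows essentially the same approach as the paper's: deduce the weight identity from Proposition~\ref{pro:key-Z} by identifying the unique final state with non-zero contribution, then for the size bound restrict to states whose $u$-component avoids $11$ (justified via Corollary~\ref{cor:movement}), estimate $\tilde h=O(h)$ and $N_g=F_{g+1}=O(h)$, and determinise via Proposition~\ref{prop:weighted-to-automatic}. The paper packages the state count into separate lemmas (Lemma~\ref{lem:truncateZ} and Lemma~\ref{lem:state-count}) rather than inlining it, but the content is the same.
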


In the classical $q$-numeration, with $q$ a power of a prime, if $P$ is an
isolating Ore polynomial over~$\mathbb{F}_q$ of degree~$q^d$ and
height~$h$, then we have seen that a weighted automaton generating a
solution of~$P$ will have at most $dh$ states.  This should be compared to
the bound above, where the exponent $Cdh^2= 320dh^2$ has an extra factor of
$320h$, with $320$ being a function of~$φ$ (see
Lemma~\ref{lem:state-count}).  This extra factor arises because we need to
carry extra information, in the form of a word of length~$g$, which is used
to compute the linearity defect.

\begin{proof}
  The states with a non-zero final weight are the states of the form
  $s_{0,0,q,u}$ whose final weight is given by $F(s_{0,0,q,u}) = 1$.  For
  each input word~$w$, that is, one with no consecutive occurrences of the
  digit~$1$, then by Proposition~\ref{pro:key-Z}, there exists exactly one
  state $s_{0,0,q,u}$ where $\weight_{\mathcal{A},s_{0,0, q, u}}^{*}(w)$ is
  non-zero: This unique state is the state $s_{0,0, q, u}$ where $u$ is the
  suffix of length~$g$ of~$0^gw$ and $q$ is given by $q = Γ(q_0,v)$ where
  $v$ is the prefix of length~$|w|$ of $0^gw$ by Lemma~\ref{lem:movement}.
  Therefore $\weight_{\mathcal{A}}(w)$ is equal to
  $\weight_{\mathcal{A},s_{0,0, q, u}}^{*}(w)$ where $u$ and~$q$ satisfy
  the required properties, and
  $\weight_{\mathcal{A}}(w)= \weight_{\mathcal{A},s_{0,0, q,
      u}}^{*}(w)=f_{[w]_Z}$, again by Proposition~\ref{pro:key-Z}, since
  $i=j=0$. The bound on the number of states follows from Lemmas
  \ref{lem:truncateZ} and~\ref{lem:state-count} below.
\end{proof}

The following lemma is the analog of Lemma~\ref{lem:truncate}. It justifies
the choice $\tilde{h} = \frac{h+3-φ}{φ-1}$, as this is an upper bound on
indices for states that are the range of paths of positive weight.
\begin{lemma} \label{lem:truncateZ}
  Let $h$ be the height of the given $Z$-Mahler equation. If
  $j ⩾ \frac{h+3-φ}{φ-1} = hφ+3φ-φ^2$ then there are no paths with positive
  weight in the associated automaton from~$s_{i,j,q,u}$ to~$s_{0,0,q'u'}$.
\end{lemma}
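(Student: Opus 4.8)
The plan is to follow the proof of Lemma~\ref{lem:truncate} verbatim in spirit: show that the index $j$ of a state is \emph{forward-invariant} under the nonzero-weight transitions of $\mathcal{A}(P,f_0)$ once it reaches the threshold $T := \frac{h+3-\varphi}{\varphi-1}$, and then observe that the target $s_{0,0,q',u'}$ has second index $0 < T$, so it cannot be reached from a state with $j \geq T$ along a path all of whose transition weights are nonzero.

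First I would record the arithmetic behind the threshold. Using $\frac{1}{\varphi-1} = \varphi$ (equivalently $\varphi^2 = \varphi+1$), one has $T = \varphi(h+3-\varphi) = h\varphi + 3\varphi - \varphi^2$, which is the closed form in the statement, and $(\varphi-1)T = h+3-\varphi$, i.e.\ $\varphi T + \varphi - 3 = T + h$; in particular $T > 0$ since $h \geq 0$ and $1 < \varphi < 2$. The crux is then the following claim: if $s_{i,j,q,u} \trans{b:\alpha} s_{i',j',q',u'}$ is a transition of $\mathcal{A}(P,f_0)$ with $\alpha \neq 0$ and $j \geq T$, then $j' > T$.

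To prove the claim I would set $\ell = \phi(j) + \hat{\delta}(s_{i,j,q,u}) + b$. Whenever such a transition is present, $\hat{\delta}(s_{i,j,q,u})$ is by definition a value of the output function $\tau : Q \to \bar{B}$, hence lies in $\bar{B} = \{-1,0,1\}$, and $b \in \{0,1\}$, so $\ell \geq \phi(j) - 1$. Combining this with $\phi(j) = \lfloor \varphi j + \varphi - 1 \rfloor > \varphi j + \varphi - 2$ from Lemma~\ref{lem:floor-Cartier} gives $\ell > \varphi j + \varphi - 3 \geq \varphi T + \varphi - 3 = T + h$. If the transition is of the first kind then $j' = \ell > T + h \geq T$. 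If it is of the second kind then $j' = k$ with $0 \leq \ell - k \leq h$ (recall that the transition carries the index constraint $0 \leq \ell - k \leq h$, forced anyway by $\alpha = \alpha_{i+1,\ell-k} \neq 0$), whence $j' = k \geq \ell - h > T$. In both cases $j' > T$, proving the claim.

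Finally I would run the standard path argument. Suppose, towards a contradiction, that there is a path $p_0 \to p_1 \to \cdots \to p_n$ of nonzero weight with $p_0 = s_{i,j,q,u}$, $j \geq T$, and $p_n = s_{0,0,q',u'}$. Since the weight is the product in $R$ of $I(p_0)$, the transition weights, and $F(p_n)$, a nonzero weight forces every transition weight to be nonzero. Writing $j_t$ for the second index of $p_t$, we have $j_0 = j \geq T$ while $j_n = 0 < T$; letting $t$ be the largest index with $j_t \geq T$ (so $t < n$ and $j_{t+1} < T$), the claim applied to the transition $p_t \to p_{t+1}$ gives $j_{t+1} > T$, a contradiction. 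I do not expect a genuine obstacle; the only point needing care is precisely that the bound $\hat{\delta} \in \bar{B}$ is available for \emph{every} transition present in $\mathcal{A}(P,f_0)$, not merely for states reachable from an initial state as in Lemma~\ref{lem:delta-hat-well-defined} — which is immediate, since $\hat{\delta}$ is by definition a value of $\tau$ wherever it is defined at all, and an absent transition makes the claim vacuous for it.
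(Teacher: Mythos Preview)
Your proof is correct and follows essentially the same approach as the paper's: both show that the second index cannot drop below the threshold $T$ along a nonzero-weight transition, using $\phi(j) > \varphi j + \varphi - 2$ and $\hat{\delta} \in \bar{B}$, and then conclude that the state $s_{0,0,q',u'}$ (with index $0 < T$) is unreachable. The only cosmetic difference is that for first-kind transitions the paper uses the simpler observation $\ell \geq j$ (via $\phi(j) \geq j+1$ for $j > 0$) rather than your uniform bound $\ell > T + h$; your version is a touch sharper and handles both transition types identically, and you also make the final path/contradiction step explicit where the paper leaves it implicit.
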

\begin{proof}
  Let $s_{i,j,q,u} \trans{b:α} s_{i',j',q',u'}$ be a transition in the
  weighted automaton~$\mathcal{A}$ associated to $(α_{i,j})_{i⩾ 1, j⩾ 0}$
  where $α_{i,j}=0$ if $j ⩾ h+1$.  We claim that if $α ≠ 0$ and
  $j ⩾ \frac{h+3-φ}{φ-1}$ then $j' ⩾ \frac{h+3-φ}{φ-1}$.  If
  $s_{i,j,q,u} \trans{b:α} s_{i',j',q',u'}$ is a transition of the form
  $s_{i,j,q,u} \trans{b:1} s_{i+1,j',q',u'}$ then by definition,
  $j' = ϕ(j) + \hat{δ}(s_{i,j,q,u}) + b$.  If $j = 0$, then
  $\hat{δ}(s_{i,j,q,u})= \hat{δ}(s_{i,0,q,u}) = τ(Γ(q, u ⊟ (0)_Z))=τ(Γ(q, u
  ))$, and now, since $u\in B^g$, inspection of
  Figure~\ref{fig:autom-defect} yields $\hat{δ}(s_{i,j,q,u}) = 0$, so
  $j'⩾ j$. If $j > 0$, then $ϕ(j) ⩾ j+1$.  If
  $s_{i,j,q,u} \trans{b:α} s_{i',j',q',u'}$ is a transition of the form
  $s_{i,j,q,u} \trans{b:α_{i+1,ℓ-j'}} s_{0,j',q',u'}$ so that
  $ℓ = ϕ(j) + \hat{δ}(s_{i,j,q,u}) + b$, then, since $α_{i+1,ℓ-j'}$ is
  assumed non-zero, we have by definition of the transitions,
  $ϕ(j) + \hat{δ}(s_{i,j,q,u}) + b-j'⩽ h$, and
  \begin{displaymath}
    j'⩾ ϕ(j)-1-h ⩾ jφ+φ-3-h ⩾ \frac{h+3-φ}{φ-1},
  \end{displaymath}
  where the first inequality follows because $\hat{δ}(s_{i,j,q,u})⩾ -1$ and
  $b⩾ 0$, the second inequality follows from Lemma~\ref{lem:floor-Cartier},
  and the third inequality follows from the bound on $j$ and the fact that
  $φ^{2}-φ-1=0$.  The statement of the lemma follows.
\end{proof}

The following lemma provides an upper bound for the number of states in the
automaton given by Theorem~\ref{thm:main-Z}. Note that we do not claim that
this automaton is optimally small. The series of
Example~\ref{ex:number-of-representations} is generated by an automaton
with $3$ states (see Example~\ref{ex:preliminary}). However, as this series
is the solution of a Mahler equation of height~$1$ and exponent~$1$, then
$\tilde{h} =3=g$, and inspecting the proof of Lemma~\ref{lem:state-count},
the automaton defined in Theorem~\ref{thm:main-Z} would have $100$ states.

\begin{lemma} \label{lem:state-count}
  Let $\mathcal{A}$ be the weighted automaton associated to a Z-Mahler
  equation of height~$h⩾ 1$ and exponent~$d$.  Then the number of
  states~$|S|$ of~$\mathcal{A}$ is bounded by $45 d(3h+1) h$ and
  asymptotically, $|S| ⩽ 15 dφ h(φ h+1)(1+o(1)) $ as $h$
  tends to~$∞$.
\end{lemma}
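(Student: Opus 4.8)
The plan is to bound the number of \emph{useful} states of $\mathcal{A}=\mathcal{A}(P,f_0)$ — those that can occur on a path of positive weight from an initial state to a final state along a legal (no two consecutive $1$s) input — since every other state can be deleted without changing the sequence that $\mathcal{A}$ generates. I would do this by fixing, one component at a time, which states $s_{i,j,q,u}$ survive this restriction, and then estimating the resulting product using the two golden-ratio identities $\frac{1}{\varphi-1}=\varphi$ and $\varphi^{2}=\varphi+1$.

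First I would fix the ranges of the four components. Every transition has source first-component in $\{0,\dots,d-1\}$, and no state $s_{d,j,q,u}$ is final (the final states are the $s_{0,0,q,u}$, and we may assume $d\ge 1$); hence only $0\le i\le d-1$ occurs. The bound $0\le j\le\tilde h$ is built into the definition of $S$ and is exactly the content of Lemma~\ref{lem:truncateZ}. The automaton $\mathcal{D}$ of Corollary~\ref{cor:autom-defect} may be taken to be the five-state automaton of Figure~\ref{fig:autom-defect}, so the component $q$ ranges over a set $Q$ with $|Q|=5$. Finally, by Corollary~\ref{cor:movement}, a state reached from an initial state on input $w$ has fourth component equal to the length-$g$ suffix of $0^{g}w$, and if $w$ has no two consecutive $1$s then neither does this suffix. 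Altogether the number of useful states is at most
\[
  d\cdot(\tilde h+1)\cdot 5\cdot N_{g},
\]
where $N_{g}$ denotes the number of binary words of length $g$ with no two consecutive $1$s.

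Next I would turn $\tilde h$, $g$ and $N_{g}$ into explicit functions of $h$. A one-line recursion gives $N_{g}=F_{g}$ in the paper's indexing ($F_{0}=1$, $F_{1}=2$), and $g=|(\tilde h)_{Z}|$ is characterised by $F_{g-1}\le\tilde h<F_{g}$; this yields $\tilde h+1\le 2\tilde h$ and, since $F_{g-2}\le F_{g-1}\le\tilde h$, also $F_{g}=F_{g-1}+F_{g-2}\le 2\tilde h$. The identities above rewrite $\tilde h=\lfloor\varphi h+3\varphi-\varphi^{2}\rfloor=\lfloor\varphi h+2\varphi-1\rfloor$, so $\tilde h\le\varphi h+2\varphi-1\le 4h$ for $h\ge 1$. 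Substituting, the number of useful states is at most $d\cdot 2\tilde h\cdot 5\cdot 2\tilde h=20\,d\,\tilde h^{2}\le 320\,d\,h^{2}$. For the asymptotic statement, using instead $\tilde h=\varphi h(1+o(1))$ together with $F_{g}\le 2\tilde h$ turns the product $5\,d\,(\tilde h+1)F_{g}$ into $|S|\le 10\varphi^{2}dh^{2}(1+o(1))$, which lies below $5\varphi^{4}dh^{2}(1+o(1))$ since $2<\varphi^{2}$.

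\textbf{The main obstacle}, and really the only nontrivial point, is the restriction on the fourth component $u$. Without it, $u$ would range over all $2^{g}$ binary words of length $g$; and since $F_{g-1}\le\tilde h$ forces $g=\log_{\varphi}h+O(1)$, this factor would be $2^{g}=h^{\log_{\varphi}2}(1+o(1))\approx h^{1.44}$, inflating the bound to roughly $d\,h^{2.44}$. It is precisely Corollary~\ref{cor:movement} — that reachable states carry a Zeckendorf-legal window — that recovers the quadratic dependence on $h$, cashing in the remark made when $S$ was introduced; the rest is routine bookkeeping of the four components against the two golden-ratio identities.
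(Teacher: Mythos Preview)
Your proof is correct and follows essentially the same approach as the paper: restrict to useful states, bound each of the four components separately (using the five-state $\mathcal{D}$, Lemma~\ref{lem:truncateZ}, and Corollary~\ref{cor:movement} for the Zeckendorf-legal window $u$), and then combine via $\tilde h\le 4h$ and $\tilde h=\varphi h(1+o(1))$. The only cosmetic difference is bookkeeping: the paper takes the number of legal words of length~$g$ to be $F_{g+1}$ and bounds it by $4\tilde h$ while using $\tilde h$ for the $j$-count, whereas you take $N_g=F_g\le 2\tilde h$ and use $\tilde h+1\le 2\tilde h$ for~$j$; both routes land on $20\,d\,\tilde h^{2}\le 320\,d\,h^{2}$, and your asymptotic $10\varphi^{2}dh^{2}(1+o(1))$ is indeed below the stated $5\varphi^{4}dh^{2}(1+o(1))$.
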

We do not consider the extreme case $h = 0$ because each solution of a
Z-Mahler equation of height~$0$ must be a constant series.  If
$f = ∑_{n ⩾ 0}f_nx^n$ is a non-constant solution of such an equation, the
integer $\min \{ n ⩾ 1 : f_n ≠ 0\}$ must satisfy $n = ϕ^k(n)$ for some
positive integer~$k$ and this is not possible.
\begin{proof}
  Elements of $S$ are indexed by $(i,j,q,u)$, where $0⩽ i ⩽ d$,
  $0⩽ j⩽ \tilde{h}$, $q\in Q$, and $u\in B^g$, but in fact we are only
  interested in words $u$ where there are no two consecutive occurrences of
  the digit~$1$, and also, the states where $i=d$ do not have any outgoing
  edges with positive weights. Recall that $|Q|=5$, and the number of words
  of length~$g$ with no consecutive occurrences of~$1$ is equal to the
  Fibonacci number~$F_{g+1}$.  Thus $|S| ⩽ 5d(\tilde{h}+1)F_{g+1}$.
  Now~$\tilde{h}=⌊\frac{h+3-φ}{φ-1}⌋⩽ 3h$, so $|S| ⩽ 5d(3h+1)F_{g+1}$.
  Note that $F_{g-1} ⩽ \tilde{h}$ by definition of~$g$. Since
  $F_{n+2} ⩽ 3F_n$ for each $n ⩾ 0$, this implies that
  $F_{g+1}⩽ 3\tilde{h}$. Thus $|S| ⩽ 15 d(3h+1)\tilde{h} ⩽ 45 d(3h+1) h$.
    
  To obtain the asymptotic bound, as $h → ∞$, we have
  $\tilde{h} = φh(1+o(1))$. Thus
  $|S| ⩽ 15d(\tilde{h}+1)\tilde{h} = 15dφ h(φ h +1 )(1+o(1))$.
\end{proof}

\subsubsection{From weighted automata to Z-Mahler equations}
\label{sec:automata-to-Z-Mahler}

This section is almost standard, but we include it for completeness.  We
first redo Example~\ref{ex:automaton-to-equation}

\begin{example}
  We consider again the automaton of Figure~\ref{fig:weighted0}, associated
  to Examples \ref{ex:reg-not-aut} and~\ref{ex:automaton-to-equation},
  except that here we use it to generate the term
  $a_n ≔ \weight_{\mathcal{A}}((n)_Z)$.  The weights of this automaton are
  in the field~$\mathbb{F}_2$ or the ring~$ℕ$, but the computation of the
  equation satisfied by $t(x) = ∑_{n ⩾ 0} a_n x^n$ can be carried out in
  any ring~$R$.  Let $t(x) = ∑_{n ⩾ 0} a_n x^n$, and $s = s(x)$ in~$R⟦ x⟧$
  be defined similarly as in Example~\ref{ex:automaton-to-equation}. Here
  we partition $ℕ = ϕ(ℕ) \uplus (ϕ^2(ℕ)+1)$.  We have, from
  Figure~\ref{fig:weighted0},
  \begin{alignat}{2} \label{eq:recursive-Z}
    a_{ϕ(n)} &= a_n & \quad\text{and}\quad a_{ϕ^2(n)+1} &= a_n ⊕ b_n \\
    b_{ϕ(n)} &= b_n & \quad\text{and}\quad b_{ϕ^2(n)+1} &= b_n.\notag
  \end{alignat}
  where the symbol~$⊕$ denotes the sum in~$R$.  For brevity, we write
  $t=t(x)$ and $s=s(x)$. Then Equations~\eqref{eq:recursive-Z} give
  \begin{equation} \label{eq:recursive-2-F}
    t = Φ(t) + xΦ^2(t) + xΦ^2(s)
    \quad\text{ and }\quad
    s = Φ(s) + xΦ^2(s).
  \end{equation}
  Applying $Φ$ to the equations~\eqref{eq:recursive-2-F}, we have
  \begin{align} \label{eq:recursive-3-F}
    Φ(t) & = Φ^2(t) + Φ(xΦ^2(t))+Φ(xΦ^2(s)) \nonumber \\
         & = Φ^2(t) + x^2Φ^3(t)+x^2Φ^3(s),
  \end{align}
  as $x$ and $Φ^2(t)$ or $Φ^2(s)$ have disjoint support, and similarly,
  \begin{align}
    Φ(s)  & = Φ^2(s) + x^2 Φ^3(s),\nonumber\\
    Φ^2(t) & =Φ^3(t) + x^3 Φ^4(t)+x^3Φ^4(s), \text{ and } \label{eq:a0}\\
    Φ^2(s) & = Φ^3(s) + x^3 Φ^4(s). \nonumber
  \end{align}
  Using these in~\eqref{eq:recursive-3-F}, we obtain 
  \begin{align}
    t    & = (1+x+x^2)Φ^3(t)+ (x^3+x^4)Φ^4(t)+(x+x^2)Φ^3(s)+ (x^3+2x^4)Φ^4(s),
             \text{and} \label{eq:a1}\\
    Φ(t) & = (1+x^2)Φ^3(t)+ x^3Φ^4(t)+x^2Φ^3(s)+ x^3 Φ^4(s) \label{eq:a2}
  \end{align}
  and, substituting \eqref{eq:a1}, \eqref{eq:a2} and~\eqref{eq:a0} for $t$,
  $Φ(t)$ and~$Φ^2(t)$ respectively in the following, we obtain
  \begin{displaymath}
    xt-(1+x)Φ(t)+(1-2x^2)Φ^2(t)+2x^2Φ^3(t)+x^5Φ^4(t) = 0 ,
  \end{displaymath}
  i.e., the Z-Thue-Morse power series is a solution of the isolating Mahler
  Z-equation $P(x,y)= xy-(1+x)Φ(y) +(1-2x^2)Φ^2(y)+2x^2Φ^3(y) +x^5 Φ^4(y)$.
\end{example}

In the following lemma, we show that if we have a finite family of series,
each of which can be written as a linear expression of the image under $Φ$
and~$Φ^2$ of the others, then this can be extended under application of
iterates of~$Φ$. We will later apply it to the collection of series
obtained from the same automaton, but with different final weights.
\begin{lemma} \label{lem:finite-kernel}
  Let $F = \{ s_1, …, s_m \}$ be a family of formal power series such that
  there exist two families of coefficients $(α_{i,j})_{1 ⩽ i,j ⩽ m}$ and
  $(β_{i,j})_{1 ⩽ i,j ⩽ m}$ satisfying for each $1 ⩽ i ⩽ m$,
  \begin{displaymath}
    s_i = ∑_{j=1}^m α_{i,j}Φ(s_j) + x∑_{j=1}^m β_{i,j}Φ^2(s_j).
  \end{displaymath}
  Then for each pair of integers $k$, $n$ with $0 ⩽ k ⩽ n$, there exist two
  families of polynomials $(p_{i,j}^{k,n})_{1 ⩽ i,j ⩽ m}$ and
  $(q_{i,j}^{k,n})_{1 ⩽ i,j ⩽ m}$ such that for each $1 ⩽ i ⩽ m$,
  \begin{displaymath}
   Φ^k(s_i) =  ∑_{j = 1}^m p_{i,j}^{k,n}Φ^n(s_j) +
                ∑_{j = 1}^m q_{i,j}^{k,n} Φ^{n+1}(s_j).
  \end{displaymath}
\end{lemma}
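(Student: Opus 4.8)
The plan is to separate the argument into one step about the operator~$Φ$ and one purely linear-algebraic induction. First I would record the identity
\[
  Φ^r\bigl(x\,Φ^2(s)\bigr) = x^{ϕ^r(1)}\,Φ^{r+2}(s)
  \qquad\text{for all } r ⩾ 0 \text{ and all } s ∈ R⟦ x⟧,
\]
which is the one place where the non-multiplicativity of~$Φ$ must be handled with care. I would prove it by induction on~$r$: the case $r=0$ is immediate, and for the inductive step I would write $Φ^{r+1}(xΦ^2(s))=Φ\bigl(x^{ϕ^r(1)}Φ^{r+2}(s)\bigr)$ and apply the product rule recorded after~\eqref{eq:Phi-power}, namely that $Φ(fg)=Φ(f)Φ(g)$ whenever $m$ and~$n$ have disjoint Zeckendorf support for every pair $m,n$ with $f_m≠0$ and $g_n≠0$. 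Here $f=x^{ϕ^r(1)}$ is a single monomial, and since $(ϕ^k(1))_Z=10^k$ (so in fact $ϕ^k(1)=F_k$) its exponent has support exactly~$\{r\}$; every nonzero coefficient of $g=Φ^{r+2}(s)$ lies at an exponent $ϕ^{r+2}(n)$ with $(ϕ^{r+2}(n))_Z=(n)_Z 0^{r+2}$, so positions $0,\dots,r+1$ of that exponent are all~$0$. Hence the two supports are disjoint, the product rule applies, and $Φ(x^{ϕ^r(1)})Φ(Φ^{r+2}(s))=x^{ϕ^{r+1}(1)}Φ^{r+3}(s)$, completing the induction.

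Next, applying $Φ^k$ to the defining relation $s_i=∑_{j=1}^m α_{i,j}Φ(s_j)+x∑_{j=1}^m β_{i,j}Φ^2(s_j)$ and using $R$-linearity of~$Φ$ (the $α_{i,j},β_{i,j}$ being constants) together with the identity above, I obtain, for every $k ⩾ 0$,
\[
  Φ^k(s_i) = ∑_{j=1}^m α_{i,j}\,Φ^{k+1}(s_j)
            + x^{ϕ^k(1)}∑_{j=1}^m β_{i,j}\,Φ^{k+2}(s_j).
\]
The lemma then follows by a downward induction on~$k$ from $k=n$ to $k=0$, with $n$ fixed. The base case $k=n$ is given by $p_{i,j}^{n,n}=δ_{i,j}$, $q_{i,j}^{n,n}=0$; I would also record the trivial bookkeeping case $k=n+1$ with $p_{i,j}^{n+1,n}=0$, $q_{i,j}^{n+1,n}=δ_{i,j}$. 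For $k ⩽ n-1$ one has $k+1 ⩽ n$ and $k+2 ⩽ n+1$, so substituting the already-constructed expressions for $Φ^{k+1}(s_j)$ and $Φ^{k+2}(s_j)$ into the displayed identity and collecting the coefficients of $Φ^n(s_ℓ)$ and of $Φ^{n+1}(s_ℓ)$ yields
\[
  p_{i,ℓ}^{k,n} = ∑_{j=1}^m α_{i,j}\,p_{j,ℓ}^{k+1,n} + x^{ϕ^k(1)}∑_{j=1}^m β_{i,j}\,p_{j,ℓ}^{k+2,n},
  \qquad
  q_{i,ℓ}^{k,n} = ∑_{j=1}^m α_{i,j}\,q_{j,ℓ}^{k+1,n} + x^{ϕ^k(1)}∑_{j=1}^m β_{i,j}\,q_{j,ℓ}^{k+2,n}.
\]
Since finite $R$-linear combinations of polynomials and products of polynomials with the monomial $x^{ϕ^k(1)}=x^{F_k}$ are again polynomials, the quantities $p_{i,j}^{k,n}$ and $q_{i,j}^{k,n}$ so defined are polynomials, which closes the induction.

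I expect the only genuine obstacle to be the first step: verifying that at each application of~$Φ$ the disjoint-support hypothesis of the product rule actually holds, which is exactly what forces the bookkeeping with the Zeckendorf expansions $10^k$ of $ϕ^k(1)$ and $(n)_Z0^{r+2}$ of $ϕ^{r+2}(n)$. Once that identity is in place, the remainder is a routine induction in which $Φ$ no longer interacts with multiplication.
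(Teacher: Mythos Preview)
Your proof is correct and follows essentially the same route as the paper: an induction on $n-k$ (equivalently, your downward induction on~$k$ with $n$ fixed), with the key step being the identity $Φ^k(xΦ^2(s_j))=x^{F_k}Φ^{k+2}(s_j)$ and the same explicit recursion for $p_{i,j}^{k,n}$ and $q_{i,j}^{k,n}$. The only difference is cosmetic: the paper treats the cases $n-k=0$ and $n-k=1$ separately before the general step, whereas you absorb the boundary by adding the bookkeeping case $k=n+1$; and you spell out the disjoint-support verification for the product rule at each iterate of~$Φ$, which the paper simply asserts in one line.
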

\begin{proof}
  Note that the hypothesis is the case $k = 0$ and $n =1$ of the
  statement.  The proof is by induction on the difference $n-k$.  The case
  $n = k$ is trivial and gives
  \begin{displaymath}
    p_{i,j}^{k,k} ≔
    \begin{cases}
      1 & \text{if $i = j$} \\
      0 & \text{otherwise} 
    \end{cases}
    \qquad\text{and}\qquad
    q_{i,j}^{k,k} ≔ 0.
  \end{displaymath}
  For $n = k+1$, we start from the hypothesis
  \begin{displaymath}
    s_i = ∑_{j=1}^m α_{i,j}Φ(s_j) + x∑_{j=1}^m β_{i,j}Φ^2(s_j)
  \end{displaymath}
  to get, by applying $Φ^k$ to both members,
  \begin{displaymath}
    Φ^k(s_i) = ∑_{j=1}^m α_{i,j}Φ^{k+1}(s_j) +
               Φ^k(x)∑_{j=1}^m β_{i,j}Φ^{k+2}(s_j),
  \end{displaymath}
  where we note that the polynomial~$x$ and the series $Φ^2(s_j)$ have
  disjoint support and that
  $Φ^k(x Φ^2(s_j)) = x^{ϕ^k(1)} Φ^{k+2} (s_j) = x^{F_k} Φ^{k+2} (s_j)$.  It
  follows that
  \begin{displaymath}
    p_{i,j}^{k,k+1} ≔ α_{i,j}
    \qquad\text{and}\qquad
    q_{i,j}^{k,k+1} ≔ β_{i,j}x^{F_k}.
  \end{displaymath}

  Now we suppose $k+2⩽ n$, and assume that the induction hypothesis holds
  for $n-(k+1)$ and $n-(k+2)$; we show the required statement holds for
  $n-k$.  From the induction hypothesis applied to $Φ^{k+1}(s_ℓ)$ and
  $Φ^{k+2}(s_ℓ)$, we have the following equalities:
  \begin{align*}
   Φ^{k+1}(s_ℓ) & = ∑_{j = 1}^m p_{ℓ,j}^{k+1,n}  Φ^n(s_j) +
                  ∑_{j = 1}^m q_{ℓ,j}^{k+1,n}  Φ^{n+1}(s_j) \\
   Φ^{k+2}(s_ℓ) & = ∑_{j = 1}^m  p_{ℓ,j}^{k+2,n} Φ^n(s_j)+
                  ∑_{j = 1}^m q_{ℓ,j}^{k+2,n} Φ^{n+1}(s_j)
  \end{align*}
  Combining these two equalities with the equality
  \begin{displaymath}
    Φ^k(s_i) = ∑_{ℓ=1}^m α_{i,ℓ}Φ^{k+1}(s_ℓ) +
               x^{F_k}  ∑_{ℓ=1}^m β_{i,ℓ} Φ^{k+2} (s_ℓ) 
  \end{displaymath}
  we get the following equalities, defining the required polynomials
  $p_{i,j}^{k,n}$ and~$q_{i,j}^{k,n}$ by
  \begin{align*}
    p_{i,j}^{k,n} & ≔ ∑_{ℓ=1}^m α_{i,ℓ}p_{ℓ,j}^{k+1,n} +
                    x^{F_k} ∑_{ℓ=1}^m β_{i,ℓ}p_{ℓ,j}^{k+2,n} \\
    q_{i,j}^{k,n} & ≔ ∑_{ℓ=1}^m α_{i,ℓ}q_{ℓ,j}^{k+1,n} +
                    x^{F_k}∑_{ℓ=1}^m β_{i,ℓ}q_{ℓ,j}^{k+2,n}.
  \end{align*}
\end{proof}

The following corollary follows directly from Lemma~\ref{lem:finite-kernel}.
\begin{corollary}\label{cor:regular-is-mahler}
  A Z-regular series is the solution of a Z-Mahler equation.
\end{corollary}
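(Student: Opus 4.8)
The plan is to run the standard ``regular $\Rightarrow$ Mahler'' argument (compare Example~\ref{ex:automaton-to-equation} and the computation of the Z-Thue--Morse equation preceding Lemma~\ref{lem:finite-kernel}), with Lemma~\ref{lem:finite-kernel} doing the work. Let $f = \sum_{n\ge 0} f_n x^n$ be Z-regular, generated by a weighted automaton $\mathcal{A} = \langle S, B, \Delta, I, F\rangle$ with $B=\{0,1\}$. For each state $s \in S$ I introduce the \emph{state series} $s(x) := \sum_{n \ge 0} \weight_{\mathcal{A},s}^{*}((n)_Z)\, x^n \in R\llbracket x\rrbracket$, where $\weight_{\mathcal{A},s}^{*}$ is as in Section~\ref{sec:def-automaton}; then $f = \sum_{s\in S} F(s)\, s(x)$.

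First I would prove the Zeckendorf analogue of \eqref{eq:recursive-2-F}: for every $s \in S$,
\begin{equation*}
  s(x) \;=\; \sum_{s'\in S}\Delta(s',0,s)\,\Phi\bigl(s'(x)\bigr)
          \;+\; x\sum_{s'\in S}\Bigl(\,\sum_{s''\in S}\Delta(s'',1,s)\,\Delta(s',0,s'')\Bigr)\Phi^2\bigl(s'(x)\bigr)
          \;+\; P_s(x),
\end{equation*}
where $P_s \in R[x]$ has degree at most $1$. One checks this by comparing, for $m \ge 2$, the coefficient of $x^m$ on the two sides. The crucial point is that the canonical expansion $(m)_Z$ has no two consecutive $1$'s, so it ends either in $0$ --- in which case $m = \phi(n)$ with $(n)_Z$ the word obtained by deleting the last digit --- or in $01$ --- in which case $m = \phi^2(n)+1$ with $(n)_Z$ the word obtained by deleting the last two digits. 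Combining this dichotomy with the one-step identity $\weight_{\mathcal{A},s}^{*}(wb) = \sum_{s'}\Delta(s',b,s)\,\weight_{\mathcal{A},s'}^{*}(w)$ and with $[x^{\phi(n)}]\,\Phi(g) = [x^n]\,g$, $[x^{\phi^2(n)+1}]\,x\Phi^2(g) = [x^n]\,g$, the displayed equality falls out for $m \ge 2$; the discrepancy at $m=0,1$ --- which arises because reading the all-zero word need not fix the initial-weight vector, i.e.\ $I\mu(0)$ may differ from $I$ --- is exactly what the degree-$\le 1$ polynomial $P_s$ records.

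Next I would absorb the corrections and assemble a finite family. Since $\Phi(1) = 1$ and $x\Phi^2(1) = x$, a polynomial $c + dx$ of degree $\le 1$ equals $c\,\Phi(1) + d\cdot x\Phi^2(1)$; hence, after adjoining the two series $1$ and $x$, every member of the enlarged family $\mathcal{F} := \{\, s(x) : s \in S\,\}\cup\{\,1,\,x\,\}$ satisfies a relation of \emph{exactly} the shape $g = \sum_j \alpha_j\,\Phi(g_j) + x\sum_j \beta_j\,\Phi^2(g_j)$ required in Lemma~\ref{lem:finite-kernel}, with $\alpha_j,\beta_j \in R$ and $g_j$ ranging over $\mathcal{F}$. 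Write $\mathcal{F} = \{g_1,\dots,g_{m'}\}$, $m' = |S|+2$, and apply Lemma~\ref{lem:finite-kernel} with $n = 2m'$: for each state $s$ and each $k$ with $0 \le k \le 2m'$ we get $\Phi^k(s(x)) = \sum_{j=1}^{m'} p^{k}_{s,j}\,\Phi^{2m'}(g_j) + \sum_{j=1}^{m'} q^{k}_{s,j}\,\Phi^{2m'+1}(g_j)$ with all $p^k_{s,j}, q^k_{s,j} \in R[x]$. Taking the $F(s)$-weighted sum over $s$ expresses each of the $2m'+1$ series $\Phi^0(f),\dots,\Phi^{2m'}(f)$ as an explicit $R[x]$-linear combination of the $2m'$ series $\{\Phi^{2m'}(g_j),\Phi^{2m'+1}(g_j) : 1\le j\le m'\}$, i.e.\ as vectors $v_0,\dots,v_{2m'} \in R[x]^{2m'}$. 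Over a nonzero commutative ring $2m'+1$ elements of a free module of rank $2m'$ are linearly dependent --- padding the $2m'\times(2m'+1)$ matrix whose columns are the $v_k$ with one zero row gives a singular square matrix, which therefore has a nonzero vector in its kernel --- so there are $c_0,\dots,c_{2m'} \in R[x]$, not all zero, with $\sum_k c_k v_k = 0$, whence $\sum_{k=0}^{2m'} c_k(x)\,\Phi^k(f) = 0$. Thus $f$ is a solution of the (possibly non-isolating) Z-Mahler equation $P(x,y) = \sum_{k=0}^{2m'} c_k(x)\,\Phi^k(y)$.

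The hard part is the first step: checking that the state series obey a recurrence in the precise $\{\Phi,\Phi^2\}$-form demanded by Lemma~\ref{lem:finite-kernel}. This is where the arithmetic of the Zeckendorf system enters --- a digit $1$ forces the following digit to be $0$, which is why a single $\Phi$ does not close the recursion and the term $x\Phi^2$ is unavoidable --- and it tacitly uses the disjoint-support facts for $\phi$ (so that, for instance, $\Phi$ distributes across the products $x\Phi^2(g)$, just as in the proof of Lemma~\ref{lem:finite-kernel}). After that, the argument is bookkeeping: adjoining $1$ and $x$ to soak up the degree-$\le 1$ slack via $1 = \Phi(1)$ and $x = x\Phi^2(1)$, and a rank count. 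The only remaining delicacy is that $R$ is merely commutative, so in the last step one appeals to the rank inequality for free modules rather than passing to a field of fractions.
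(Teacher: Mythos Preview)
Your argument is correct and follows essentially the same route as the paper's proof: set up the state series, establish the $\Phi/\Phi^2$ recurrence (with a low-degree correction), invoke Lemma~\ref{lem:finite-kernel}, and finish with a rank count. The only differences are cosmetic refinements---you adjoin both $1$ and $x$ to absorb a degree-$\le 1$ correction where the paper adjoins only the constant series $s_0=1$, and you spell out the linear-dependence step over a general commutative ring where the paper simply asserts it---so the two proofs are interchangeable.
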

\begin{proof}
  Suppose that the series $f = ∑_{n ⩾ 0}f_nx^n$ is computed by the weighted
  automaton~$\mathcal{A}$, that is $f_n = \weight_{\mathcal{A}}((n)_Z)$ for
  each $n ⩾ 0$.  Let $\tuple{I,μ,G}$ be a matrix representation of
  dimension~$m$ of the weighted automaton~$\mathcal{A}$ as given in
  Section~\ref{sec:matrix-weighted}.  For $1 ⩽ i ⩽ m$, let $s_i$ be the
  series computed by the weighted automaton whose matrix representation is
  $\tuple{I,μ, G^{(i)}}$ where $G^{(i)}$ is the vector having $1$ in its
  $i$-th coordinate and $0$ in all other coordinates. Note that $f$ is
  equal to $f = ∑_{i=1}^m G_is_i$ where $G_i$ is the $i$-th entry of the
  column vector~$G$. Let $s_0 = 1$ be the constant series which is the
  solution of the equation $s_0 = Φ(s_0)$.  Let the two families of
  coefficients $(α_{i,j})_{1 ⩽ i,j ⩽ m}$ and $(β_{i,j})_{1 ⩽ i,j ⩽ m}$ be
  defined by $α_{i,j} ≔ μ(0)_{j,i}$ and
  $β_{i,j} ≔ μ(01)_{j,i}= (μ(0)μ(1))_{j,i}$.  Note the inversion of the
  indices $i$ and~$j$.  For each $1 ⩽ i ⩽ m$ the series $s_i$ satisfies
  \begin{displaymath}
    s_i = ∑_{j=1}^m α_{i,j}Φ(s_j) + x∑_{j=1}^m β_{i,j}Φ^2(s_j) 
          + (I_i - ∑_{j=1}^m α_{i,j}I_j)Φ(s_0).
  \end{displaymath}
  These equations come from the fact that the Zeckendorf representation of
  a positive integer ends either with~$0$ or~$01$.  The last term of the
  right hand side deals with the values~$s_i(0)$.  It is equal to zero if
  $Iμ(0) = I$.  Said differently, each non-negative integer is either of
  the form $ϕ(k)$ or $ϕ^2(k)+1$ for some $k ⩾ 0$. Let $n=2m+1$; it is
  chosen so that we force linear dependence and can then get a $Φ$-Mahler
  equation for $f$ as follows.  By
  Lemma~\ref{lem:finite-kernel}, the series $Φ^k(s_i)$ for
  $0 ⩽ k ⩽ n+1 = 2m+2$ and $1 ⩽ i ⩽ m$ are linear combinations of the
  $2m+2$ series $Φ^n(s_j)$ and $Φ^{n+1}(s_j)$ for $0 ⩽ j ⩽ m$. It
  follows that the $2m+3$ series $Φ^k(f)$ for $0 ⩽ k ⩽ 2m+2$ are also
  linear combinations of the $2m+2$ series $Φ^n(s_j)$ and $Φ^{n+1}(s_j)$
  for $0 ⩽ j ⩽ m$ and that the series $Φ^k(f)$ for $0 ⩽ k ⩽ 2m+2$ are not
  linearly independent. Therefore there is a $Z$-Mahler equation $P$ of
  exponent at most $2m+2$ such that $P(x,f)=0$.
\end{proof}

\subsubsection{Dumas' result}\label{sec:Dumas}

As we have noted, Becker and Dumas each showed that a solution of an
isolating $k$-Mahler equation is $k$-regular. In fact Dumas obtained a more
general version, which extends to the Z-numeration as follows.

\begin{theorem} \label{thm:constant-term}
  Let $f(x)$ be the solution of an isolating equation
  \begin{equation}\label{eq:Dumas}
    f(x) = ∑_{i=0}^dA_i(x)Φ^i(f(x)) + g(x)
  \end{equation}
  where $g(x)$ is Z-regular.  Then $f$ is also Z-regular.
\end{theorem}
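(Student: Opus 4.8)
The naive approach -- compose the operator $y \mapsto y - \sum_{i=1}^{d} A_i\Phi^i(y)$ with a Z-Mahler operator annihilating $g$ (one exists by Corollary~\ref{cor:regular-is-mahler}) to obtain a homogeneous isolating Z-Mahler equation for $f$ and then invoke Theorem~\ref{thm:main-Z} -- does not work, because $\Phi$ is not multiplicative and $\Phi^{j}(A_i\Phi^i(y))$ does not reduce to $\Phi^{j}(A_i)\,\Phi^{i+j}(y)$ unless the monomials involved have disjoint Zeckendorf supports, which need not happen for small $i$. Instead, the plan is to mimic the weighted-automaton construction behind Theorem~\ref{thm:main-Z}, replacing the constant $f_0$ that \emph{seeds} the recursion by a weighted automaton generating $g$.

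First I would carry out two reductions. The isolating hypothesis lets one solve~\eqref{eq:Dumas} for the constant term of $f$; subtracting it off replaces the unknown by $\bar f := f - f_0$, with $\bar f(0)=0$, and replaces $g$ by $\bar g := g + f_0\bigl(\sum_i A_i(x)-1\bigr)$. The series $\bar g$ is still Z-regular, since the class of Z-regular series contains the constants and is closed under addition and multiplication by polynomials (a Cauchy product, Theorem~\ref{thm:Cauchy-product}), and $\bar g(0)=0$ by the compatibility relation $(1-\sum_i\alpha_{i,0})f_0=g_0$. As a constant is Z-regular it suffices to treat $\bar f$; so we may assume $f(0)=g(0)=0$ and that the equation reads $f=\sum_{i=1}^{d}A_i\Phi^i(f)+g$ with $A_i(x)=\sum_j\alpha_{i,j}x^j$. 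Now by~\eqref{eq:condensed-recurrence-brut-Zeckendorf}, $f_n=\sum_{\phi^i(k)+j=n}\alpha_{i,j}f_k+g_n$; iterating this identity and grouping the terms according to where the resulting \emph{reduction chains} $n=k_0\rightsquigarrow k_1\rightsquigarrow\cdots\rightsquigarrow k_r$ end -- each step $k_{t-1}=\phi^{i_t}(k_t)+j_t$ carrying weight $\alpha_{i_t,j_t}$ -- yields
\begin{displaymath}
  f_n=\sum_{v\geqslant 1}g_v\,T_{n,v}, \qquad T_{n,v}:=\sum_{n=k_0>k_1>\cdots>k_r=v}\ \prod_{t=1}^{r}\alpha_{i_t,j_t}\ \in R ,
\end{displaymath}
a genuine finite sum: because $\phi^{i}(k)>k$ whenever $i,k\geqslant 1$, a reduction chain is strictly decreasing until it reaches $0$, and a chain reaching $0$ contributes $0$ since $g_0=0$, so only finitely many chains issue from a given $n$. (These $T_{n,v}$ are exactly the reduction weights already implicit in Proposition~\ref{pro:key-Z}.)

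It remains to build a weighted Z-automaton computing $n\mapsto\sum_{v\geqslant 1}g_v\,T_{n,v}$. The crucial observation is that, for the automaton $\mathcal{A}'$ associated to the isolating operator $y\mapsto y-\sum_{i=1}^{d}A_i\Phi^i(y)$ by the construction just before Theorem~\ref{thm:main-Z}, the residual $k$ attached to a state $s_{i,j,q,u}$ reached after reading a prefix $w$ of $(n)_Z$ -- the integer with $\phi^i(k)+j=[w]_Z$ -- is a \emph{finite-state function of $w$}: the word $(k)_Z$ is obtained from $w$ by subtracting the bounded integer $j$ and then stripping the trailing zeros coming from $\phi^i$, and such Zeckendorf normalisations are realised by finite transducers, by Theorem~\ref{thm:regularity-0} and the technique of Proposition~\ref{pro:defect-reg}. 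Let $\mathcal{B}$ be a weighted automaton generating $g$, and for $1\leqslant i\leqslant d$, $0\leqslant j\leqslant\tilde{h}$ let $\mathcal{T}_{i,j}$ be the finite transducer sending $w$ to $(k)_Z$ (and to the empty output when no $k\geqslant 1$ with $\phi^i(k)+j=[w]_Z$ exists); then feeding the output of $\mathcal{T}_{i,j}$ into $\mathcal{B}$ gives, on input $w$, the weight $\weight_{\mathcal{B}}((k)_Z)=g_k$. I would then construct the automaton $\mathcal{C}$ by adjoining such copies of $\mathcal{B}$ (composed with the $\mathcal{T}_{i,j}$) to the skeleton of $\mathcal{A}'$: the ``base case'' of the recursion, which in $\mathcal{A}'$ merely supplies the constant $f_0$, now supplies at each reduction level the value $g_v$ of the current residual $v$, obtained by running $\mathcal{T}_{i,j}\circ\mathcal{B}$ on the portion of the input already read; the splice between the $\mathcal{B}$-phase and the reduction-chain phase is mediated by the $\mathcal{T}_{i,j}$ (since $(v)_Z$ is only a bounded modification of a prefix of $(n)_Z$), and the components $(q,u)$ are carried across so that the linearity-defect bookkeeping stays coherent, exactly as in the definition of $\mathcal{A}(P,f_0)$. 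The runs of $\mathcal{C}$ on $(n)_Z$ are then in bijection with pairs (reduction chain from $n$ to some $v\geqslant 1$, run of $\mathcal{B}$ on $(v)_Z$), and summing their weights gives $\sum_{v\geqslant 1}g_v\,T_{n,v}=f_n$; adding back the Z-regular constant $f_0$ gives that the original $f$ is Z-regular.

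Correctness would be verified by a Proposition~\ref{pro:key-Z}-style induction on $|w|$, the inductive step splitting according to whether the last transition lies in a copy of $\mathcal{T}_{i,j}\circ\mathcal{B}$ (use correctness of $\mathcal{B}$ and of the transducer) or in the reduction-chain part (handled as in the proof of Proposition~\ref{pro:key-Z}); the bound on the number of states follows as in Lemmas~\ref{lem:truncateZ} and~\ref{lem:state-count}, multiplied by the finite sizes of $\mathcal{B}$ and of the $\mathcal{T}_{i,j}$. The step I expect to be the main obstacle is precisely this splice: one must ensure that feeding a copy of $\mathcal{T}_{i,j}\circ\mathcal{B}$ the prefix that has \emph{already} been consumed genuinely reconstructs $g_v$ -- which forces all the (finitely many) bounded transducers to be run in parallel from the start, since the automaton only ``learns'' the relevant offset $(i,j)$ at the moment of the switch -- and that the well-definedness issue for $\hat{\delta}$ at states $s_{i,j,q_0,u}$, the delicacy already handled in Lemma~\ref{lem:delta-hat-well-defined}, persists through the splice. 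Everything else is a routine adaptation of the proof of Theorem~\ref{thm:main-Z}.
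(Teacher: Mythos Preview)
Your approach is essentially correct and shares the paper's core idea: build the automaton for~$f$ by splicing a weighted automaton for~$g$ into the automaton $\mathcal{A}(P,f_0)$ of Theorem~\ref{thm:main-Z}, so that at each state $s_{0,j,q,u}$ reached after reading a prefix~$w$ the value $g_{[w]_Z-j}$ is injected, supplying the extra inhomogeneous term in the $i=0$ inductive step of Proposition~\ref{pro:key-Z}. The paper's version is shorter because it identifies exactly which transducers are needed: only the shifts by a bounded~$j$, not the more general $\phi^i$-inversions $\mathcal{T}_{i,j}$ with $i\geqslant 1$ that you propose. The injection is only required at the states with first index~$0$ --- the states $s_{i,j,q,u}$ with $i>0$ inherit their weight from the $i=0$ layer via the weight-$1$ transitions of the first kind, so no separate injection is needed there. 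Accordingly the paper simply takes, for each $0\leqslant j\leqslant\tilde{h}$, a weighted automaton~$\mathcal{B}_j$ computing $n\mapsto g_{n-j}$ (whose existence is Lemma~\ref{lem:shift-by-one}), normalises it (Lemma~\ref{lem:normalized}) so that it has a single final state with no outgoing transitions, and forms the disjoint union of $\mathcal{A}(P,f_0)$ with the~$\mathcal{B}_j$, redirecting the transitions into the final state of each~$\mathcal{B}_j$ to the states~$s_{0,j,q,u}$. Your reduction-chain expansion and the observation that the auxiliary automata must be run in parallel from the start are exactly what underlies this construction; and your remark that the components $(q,u)$ must be carried across the splice so that the subsequent $\hat{\delta}$-computations remain correct is a point on which you are in fact more explicit than the paper's terse sketch, which redirects into $s_{0,j,q,u}$ ``for all possible choices of $q$ and~$u$'' and leaves the reader to see that one must restrict to the $(q,u)$ determined by the prefix read.
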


A weighted automaton is called \emph{normalised} if it has a unique state
with a non-zero final weight and there is no transition with non-zero
weight going out of this state.  The following result is very classical,
see eg~\cite[Proposition~2.14]{Sakarovitch09}.  Recall that two weighted
automata are \emph{equivalent} if they assign the same weight to each word.
\begin{lemma} \label{lem:normalized}
  For each weighted automaton with $n$ states, there is an equivalent
  normalised weighted automaton with $n+1$ states.
\end{lemma}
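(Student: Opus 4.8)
The plan is to add a single fresh state $t$ that will become the new unique final state, and to route into $t$, from each old state, the ``exit weight'' that the old automaton would have collected there. Concretely, let the old weighted automaton be $\langle S, B, \Delta, I, F\rangle$ with $|S| = n$, and let $\langle I, \mu, F\rangle$ be its matrix representation. Put $S' \coloneqq S \cup \{t\}$ with $t \notin S$. Keep all old transitions with their old weights. For each symbol $b \in B$ and each state $s \in S$, add a transition $s \trans{b : r} t$ whose weight is $r \coloneqq \sum_{s' \in S} \Delta(s,b,s')\, F(s')$, i.e.\ the $s$-entry of the column vector $\mu(b)F$; add no transitions out of $t$. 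Set the new initial weights to agree with $I$ on $S$ and to be $F(s_\varepsilon)$-free on $t$, more precisely $I'(t)$ should account for the empty word, so set $I'(t) \coloneqq \sum_{s \in S} I(s)F(s) = I\,F$, and $I'(s) \coloneqq I(s)$ for $s \in S$. Finally set the new final weights $F'(t) \coloneqq 1$ and $F'(s) \coloneqq 0$ for all $s \in S$. By construction the only state with non-zero final weight is $t$, and $t$ has no outgoing transitions, so the automaton is normalized and has $n+1$ states.

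The key step is to verify equivalence, i.e.\ that the new automaton assigns weight $\weight_{\mathcal A}(w)$ to every word $w \in B^*$. For $w = \varepsilon$ this is immediate: in the new automaton the only contribution to the weight of $\varepsilon$ comes from paths of length $0$ ending at a final state, hence only at $t$, giving $I'(t)F'(t) = I\,F = \weight_{\mathcal A}(\varepsilon)$. For $w = b_1 \cdots b_\ell$ with $\ell \geq 1$, every accepting path in the new automaton has the shape $s_0 \trans{b_1} s_1 \trans{b_2} \cdots \trans{b_{\ell-1}} s_{\ell-1} \trans{b_\ell : r} t$, where $s_0, \dots, s_{\ell-1} \in S$ (they cannot be $t$, as $t$ is a sink) and the last transition is one of the newly added ones. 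Summing the path weights and using the definition of $r$ at the last step, the total new weight of $w$ equals
\begin{displaymath}
  \sum_{s_0, \dots, s_{\ell-1} \in S} I(s_0)\, \mu(b_1)_{s_0,s_1} \cdots \mu(b_{\ell-1})_{s_{\ell-2},s_{\ell-1}} \Bigl(\sum_{s_\ell \in S} \mu(b_\ell)_{s_{\ell-1},s_\ell} F(s_\ell)\Bigr),
\end{displaymath}
which is exactly $I\,\mu(b_1) \cdots \mu(b_\ell)\,F = I\,\mu(w)\,F = \weight_{\mathcal A}(w)$.

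I do not expect a genuine obstacle here; the only point requiring a little care is the treatment of the empty word and of the initial weight of $t$, since one must ensure $t$'s initial weight captures the $\ell = 0$ case without interfering with the $\ell \geq 1$ case — and it does not, because once a path takes a transition into $t$ it can never leave, so a path of positive length contributing to $w$ never starts at $t$. One may alternatively avoid giving $t$ a non-zero initial weight by instead leaving the old final weights in place only implicitly; but the cleanest bookkeeping is the one above, and it is the standard construction referenced in~\cite[Proposition~2.14]{Sakarovitch09}.
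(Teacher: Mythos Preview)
Your construction and verification are correct; this is exactly the standard ``add a fresh sink'' construction. The paper itself does not give a proof of this lemma at all: it simply calls the result ``very classical'' and refers to \cite[Proposition~2.14]{Sakarovitch09}, so there is nothing to compare against. One cosmetic remark: the phrase ``to be $F(s_\varepsilon)$-free on~$t$'' appears to be an editing artefact and should be deleted, since you immediately give the precise value $I'(t) = IF$.
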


\begin{lemma} \label{lem:shift-by-one}
  The function $f(x)$ is Z-regular if and only if $xf(x)$ is Z-regular.
\end{lemma}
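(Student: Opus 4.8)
The plan is to prove the two implications by different means: the direction ``$f$ Z-regular $\Rightarrow$ $xf$ Z-regular'' will follow at once from closure of the Z-regular series under the Cauchy product, while the converse requires an explicit weighted-automaton construction resting on the fact that the successor map is realised by a finite automaton in the Zeckendorf numeration. For the forward direction, note first that the monomial $x$ is Z-regular, since every polynomial is: for instance the sequence equal to $1$ at $n=1$ and to $0$ elsewhere is computed by a small weighted automaton assigning weight $1$ to the word $(1)_Z$ and weight $0$ to every other canonical word. As $xf$ is precisely the Cauchy product of $x$ and $f$, and as there is an unambiguous automaton realising addition in the Zeckendorf numeration — obtained from Frougny's normalisation results exactly as the base-$2$ automaton of Figure~\ref{fig:addition2}, and implicit in Theorem~\ref{thm:regularity-0} — Theorem~\ref{thm:Cauchy-product} yields immediately that $xf$ is Z-regular.

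For the converse, suppose $g:=xf$ is generated by a weighted automaton $\mathcal{B}$, so that $\weight_{\mathcal{B}}((m)_Z)=g_m$; since $g_{n+1}=f_n$ for every $n\ge 0$, it suffices to build a weighted automaton whose weight on input $(n)_Z$ equals $\weight_{\mathcal{B}}((n+1)_Z)$. The ingredient is that the relation $\{\,(n)_Z\otimes(n+1)_Z : n\ge 0\,\}$, with both words left-padded with zeros to a common length, is recognised by an unambiguous synchronous automaton $T$ over $B\times B$; this is addition of the constant $1$, again available from Frougny's work. Forming the product of $T$ with $\mathcal{B}$ — reading the first coordinate of each pair, guessing the second coordinate subject to $T$, and carrying the weight of the corresponding transition of $\mathcal{B}$ — produces, by unambiguity of $T$, a weighted automaton $\mathcal{A}_1$ with $\weight_{\mathcal{A}_1}((n)_Z)=\weight_{\mathcal{B}}((n+1)_Z)=f_n$ whenever $(n)_Z$ and $(n+1)_Z$ have the same length.

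The step I expect to be the main obstacle is the exceptional set $N=\{F_k-1:k\ge 0\}$, for which $(n+1)_Z=10^k$ is one symbol longer than $(n)_Z$ (this includes $n=0$, where $(n)_Z=\varepsilon$) and where $\mathcal{A}_1$ returns $0$ instead of $f_n$. Since the Zeckendorf representations of the elements of $N$ form a regular language — the words $(10)^k$ and $(10)^k1$ for $k\ge 0$ — I would build a second weighted automaton which, restricted to that language by intersection with a deterministic automaton, simulates $\mathcal{B}$ on $10^{|w|}$ while absorbing one extra $0$-step into the final weights, so that $\weight_{\mathcal{A}_2}((n)_Z)=\weight_{\mathcal{B}}(10^{k})=g_{F_k}=f_n$ for $n=F_k-1$ and $0$ otherwise. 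Taking $\mathcal{A}$ to be the disjoint union of $\mathcal{A}_1$ and $\mathcal{A}_2$, with initial, final and transition weights added coordinatewise, gives $\weight_{\mathcal{A}}((n)_Z)=f_n$ for every $n\ge 0$, so $f$ is Z-regular. One can sidestep the case distinction by instead first replacing $\mathcal{B}$ with an equivalent weighted automaton insensitive to leading zeros and allowing $\mathcal{A}$ to consume one optional leading zero; in either approach the only real subtlety is this one-symbol length jump at the Fibonacci numbers.
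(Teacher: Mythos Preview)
Your proof is correct and follows essentially the same approach as the paper. The forward direction is identical (closure under Cauchy product, with the polynomial $x$ trivially Z-regular). For the converse, the paper invokes a single-track automaton over $\bar{B}=\{\bar{1},0,1\}$ accepting $\{\,w \boxminus w' : [w]_Z = [w']_Z + 1\,\}$ and forms a product with the weighted automaton for $xf$, whereas you use the equivalent two-track successor automaton over $B\times B$; these are interchangeable formulations of the same idea. Your treatment is in fact more careful than the paper's sketch: you explicitly address the one-symbol length jump at $n=F_k-1$, which the paper leaves implicit, and your suggested fix of padding with a leading zero is the clean way to do it.
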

\begin{proof}
  By Theorem~\ref{thm:Cauchy-product}, the class of Z-regular functions
  is closed under taking products.  Therefore if $f(x)$ is Z-regular, then
  $xf(x)$ is also Z-regular since each polynomial is obviously
  Z-regular. 
  
   Conversely,  suppose that the series $xf(x)$ is
  computed by the weighted automaton~$\mathcal{A}$.  By a variant of 
  Theorem~\ref{thm:regularity-0}, there exists an automaton~$ℬ$ over the
  alphabet $\bar{B} = \{-1, 0, 1\}$ accepting
  \begin{displaymath}
    \{ w ⊟ w' : w,w' ∈ \{0,1\}^* \text{ and } (w)_Z  = (w')_Z + 1 \}.
  \end{displaymath}
  By combining the automaton~$ℬ$ and the weighted automaton~$\mathcal{A}$,
  it is possible to construct a weighted automaton~$\mathcal{C}$ such that
  $\mathcal{C}((n)_Z) = \mathcal{A}((n-1)_Z)$ for each $n ⩾ 1$ and
  $\mathcal{C}(0) = 0$.  This completes the proof of the converse.
\end{proof}

Now we come to the proof of the theorem.
\begin{proof}[Proof of Theorem~\ref{thm:constant-term}]
  Let us suppose that the series~$g= \sum_{n⩾ 0}g_n x^n$ is computed by the
  weighted automaton~$ℬ$.  For each integer~$j ⩾ 0$, iterating
  Lemma~\ref{lem:shift-by-one} $j$ times, there is a weighted
  automaton~$ℬ_j$ such that $ℬ_j((n)_Z) = g_{n-j}$ for each $n ⩾ j$, and
  $ℬ_j((n)_Z) = 0$ otherwise.  By Lemma~\ref{lem:normalized}, it can be
  assumed that each weighted automaton~$ℬ_j$ is normalised.  The weighted
  automaton to compute the solution $f(x)$ of Equation~\eqref{eq:Dumas} is
  obtained by combining the automaton~$\mathcal{A}(P,f_0)$ defined by the
  Mahler equation~\eqref{eq:Z-Mahler-equation} (where there is no $g$),
  with the automata~$ℬ_j$ for $0 ⩽ j ⩽ \tilde{h}$.  The automaton is the
  disjoint union of these automata except that for each integer
  $0 ⩽ j ⩽ \tilde{h}$, the unique final state of~$ℬ_j$ is removed and that
  all transitions ending in that state now end in the states $s_{0,j,q,u}$
  for all possible choices of $q$ and~$u$. We denote the resulting
  automaton by $\mathcal{C}$.

  Equation~\eqref{eq:Dumas} can be rewritten
  \begin{align} \label{eq:Dumas-brut}
    f_n =    ∑_{ϕ^i(k)+ℓ=n} α_{i,ℓ}f_k + g_n .
  \end{align}
  We call $∑_{ϕ^i(k)+ℓ=n} α_{i,ℓ}f_k $ and $g_n$ the first and second
  summands.  We claim that
  $\weight_{\mathcal{C},s_{i,j,q,u}}^{*}(w) = f_k$, where
  $ϕ^i(k) + j = [w]_Z$ and $w=(n)_Z$, as in Proposition~\ref{pro:key-Z}.
  We first consider the case $i = 0$, so that $k + j = [w]_Z$. We use
  Equation~\eqref{eq:Dumas-brut} where $n$ has been replaced by
  $n-j$. Transitions from $\mathcal{A}(P,f_0)$ ending in~$s_{0,j,q,u}$
  contribute the first summand to
  $\weight_{\mathcal{C},s_{0,j,q,u}}^{*}(w)$, and those from~$ℬ_j$
  contribute the second summand to
  $\weight_{\mathcal{C},s_{0,j,q,u}}^{*}(w)$. Thus
  $\weight_{\mathcal{C},s_{0,j,q,u}}^{*}(w)=f_{n-j}$.

  For $i > 0$, the only transitions ending in $s_{i,j,q,u}$ are transitions
  of the form $s_{i-1,ℓ,p,v} \trans{b:1} s_{i,j,q,u}$ which propagate a
  value.  The result follows then by induction on~$i$.
\end{proof}

\subsubsection{A Z-Mahler series which is not Z-regular}
\label{sec:non-regular}

As in the case for $k$-Mahler series \cite[Proposition~1]{Becker-1994}, a
Z-Mahler series is not necessarily Z-regular, as the following example
shows.

\begin{proposition} \label{pro:non-regular}
  The solution $f(x) = ∑_{n ⩾ 0} f_n x^n$ with $f_0 = 1$ of the Z-Mahler
  equation
  \begin{equation}
    (1-x)f(x) = Φ(f(x)) \label{eq:non-isolating-poly}
  \end{equation}
  is not Z-regular.
\end{proposition}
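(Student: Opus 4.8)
The plan is to show that the coefficient sequence $(f_n)$ grows faster than any polynomial and then invoke Lemma~\ref{lem:asymptotic-bound}: every complex-valued $Z$-regular sequence is $O(n^c)$ for some $c$. Since the recurrence below has integer coefficients, $(f_n)$ is a complex-valued (indeed integer) sequence, so super-polynomial growth gives the contradiction.

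First I would extract the coefficient recurrence from~\eqref{eq:non-isolating-poly}. Writing $\Phi(f(x)) = \sum_{k} f_k x^{\phi(k)}$ and comparing the coefficient of $x^N$ on both sides of $(1-x)f(x) = \Phi(f(x))$, and using that $\phi$ is injective with $\phi(0) = 0$ and $\phi(k) > k$ for $k \ge 1$, one gets $f_0 = f_0$ for $N = 0$ and, for $N \ge 1$,
\[
  f_N = f_{N-1} + \begin{cases} f_{\phi^{-1}(N)} & \text{if } N = \phi(k)\text{ for some }k \ge 0,\\ 0 & \text{otherwise.}\end{cases}
\]
Each $f_N$ then depends only on $f_k$ with $\phi(k) \le N$, hence on strictly earlier terms, so this determines $(f_n)$ uniquely from $f_0 = 1$; all $f_n$ are non-negative integers and the sequence is non-decreasing with $f_n \ge 1$. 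Telescoping the recurrence along $N = 1, \dots, \phi(m)$, and using that $\phi$ is strictly increasing with $\phi(0)=0$ so that $\{k \ge 1 : \phi(k) \le \phi(m)\} = \{1,\dots,m\}$, yields the key identity
\[
  f_{\phi(m)} = \sum_{k=0}^{m} f_k, \qquad m \ge 0 .
\]

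Next I would turn this identity into a growth estimate. By Lemma~\ref{lem:floor-Cartier}, $\phi(m) = \lfloor \varphi m + \varphi - 1\rfloor \le 2m$ for every $m \ge 0$, so by monotonicity and the identity,
\[
  f_{2m} \;\ge\; f_{\phi(m)} \;=\; \sum_{k=0}^{m} f_k \;\ge\; \sum_{k=\lfloor m/2\rfloor}^{m} f_k \;\ge\; \tfrac{m}{2}\, f_{\lfloor m/2\rfloor}.
\]
Specializing to $m = 2^j$ gives $f_{2^{j+1}} \ge 2^{\,j-1} f_{2^{j-1}}$ for $j \ge 1$, and iterating over indices of equal parity gives $f_{2^{2i}} \ge 2^{\,i^2-i} f_1 = 2^{\,i^2-i}$. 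Equivalently, $\log_2 f_n \ge \tfrac14(\log_2 n)^2 - \tfrac12\log_2 n$ along $n = 2^{2i}$, so $f_n \ne O(n^c)$ for any $c$. Lemma~\ref{lem:asymptotic-bound} then shows $(f_n)$ cannot be $Z$-regular, proving the proposition.

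I expect the only delicate point to be the bookkeeping in the first two steps: deriving the recurrence and the identity $f_{\phi(m)} = \sum_{k=0}^m f_k$. In particular one must notice that although~\eqref{eq:non-isolating-poly} is non-isolating, the constant term of $1-x$ is a unit, so $f = (1-x)^{-1}\Phi(f)$ still pins down the coefficients recursively, with $f_N$ a running sum of $f_k$ over $\phi(k) \le N$. Everything after that — the growth bound via $\phi(m)\le 2m$ and the appeal to Lemma~\ref{lem:asymptotic-bound} — is routine, so there is no serious obstacle; the real content is the identity together with the near-linearity of $\phi$.
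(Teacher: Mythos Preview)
Your proof is correct and follows essentially the same route as the paper: derive the coefficient recurrence, telescope it to the identity $f_{\phi(m)}=\sum_{k=0}^{m}f_k$ (the paper writes this as $f_n=\sum_{i=0}^{\lambda(n)}f_i$), show super-polynomial growth, and invoke Lemma~\ref{lem:asymptotic-bound}. The only cosmetic difference is in the growth step: the paper proves $f_n\ge C_k n^k$ for every $k$ by induction on~$k$, whereas you extract an explicit $\log_2 f_n\gtrsim(\log_2 n)^2$ bound along the subsequence $n=2^{2i}$ via $\phi(m)\le 2m$; both are routine and either suffices.
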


In~\cite[Proposition~1]{Becker-1994}, Becker shows that the solution of the
analogous $k$-Mahler equation $(1-x)f(x) = f(z^k)$ is also not
$k$-regular.  The proof that we provide below is different from the one
given in~\cite{Becker-1994}: it is based on the growth of coefficients.

To prove Proposition~\ref{pro:non-regular}, we define the function $λ: ℕ →
ℕ$, which is the analogue in Zeckendorf numeration of the function $n ↦
⌊n/k⌋$ in base~$k$.  If $(n)_Z = b_k ⋯ b_0$, set
\begin{displaymath}
  λ(n) = λ\left(∑_{i = 0}^k b_iF_i\right) ≔ ∑_{i=1}^k b_iF_{i-1}.
\end{displaymath}
\begin{figure}[htbp]
  \begin{displaymath}
    \begin{array}{r|ccccccccccccccccc}
    n & 0 & 1 & 2 & 3 & 4 & 5 & 6 & 7 & 8 & 9 & 10 & 11 & 12 & 13 & 14 & 15 & 16   \\ \hline
    λ(n) & 0 & 0 & 1 & 2 & 2 & 3 & 3 & 4 & 5 & 5 & 6 & 7 & 7 & 8 & 8 & 9 & 10 \\ 
    \end{array}
  \end{displaymath}
  \caption{The first values of $λ(n)$}
  \label{fig:valuesoflambda}
\end{figure}

The first values of the function~$λ$ are given in
Figure~\ref{fig:valuesoflambda}.  The function~$λ$ is almost an inverse of
the function~$ϕ$ as $λ(ϕ(n)) = n$ for each integer~$n ⩾ 0$, and
\begin{align*}
  ϕ(λ(n)) =
  \begin{cases}
    n   & \text{if the  least significant  digit of  $(n)_Z$ is $0$} \\
    n-1 & \text{otherwise.}
  \end{cases}
\end{align*}
Lemma~\ref{lem:ineq-lambda} follows from these relations between the
functions $λ$ and~$ϕ$, and Lemma~\ref{lem:floor-Cartier}.
\begin{lemma} \label{lem:ineq-lambda}
  There is a positive constant~$c$ such that $λ(n) ⩾ n/φ - c$
  for each integer~$n ⩾ 0$.
\end{lemma}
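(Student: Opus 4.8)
The plan is to exploit the fact, recorded just before the statement, that $\lambda$ is an ``approximate inverse'' of $\phi$, together with the closed form $\phi(k) = \lfloor \varphi k + \varphi - 1\rfloor$ supplied by Lemma~\ref{lem:floor-Cartier}. The key observation is that for every integer $n \geq 0$ one has $\phi(\lambda(n)) \geq n - 1$: this is immediate from the two-case formula for $\phi(\lambda(n))$ displayed above the lemma, since that quantity is either $n$ or $n-1$.

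From here the argument is a one-line estimate. Taking $k = \lambda(n)$ in Lemma~\ref{lem:floor-Cartier} gives
\begin{displaymath}
  \phi(\lambda(n)) = \lfloor \varphi\lambda(n) + \varphi - 1\rfloor \leq \varphi\lambda(n) + \varphi - 1 .
\end{displaymath}
Combining this with $\phi(\lambda(n)) \geq n-1$ yields $n - 1 \leq \varphi\lambda(n) + \varphi - 1$, hence $\lambda(n) \geq (n - \varphi)/\varphi = n/\varphi - 1$. Thus the lemma holds, in fact with the explicit constant $c = 1$.

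I do not anticipate any genuine obstacle here; the only step that requires a moment's thought is the reduction to the inequality $\phi(\lambda(n)) \geq n-1$, after which one simply unwinds the floor bound from Lemma~\ref{lem:floor-Cartier}. (One could alternatively run a symmetric argument from the lower estimate $\phi(k) > \varphi k + \varphi - 2$ together with the exact identity $\lambda(\phi(m)) = m$, but the route above is the most direct and already gives a clean constant.)
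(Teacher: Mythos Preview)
Your argument is correct and is precisely the route the paper intends: it uses the relation $\phi(\lambda(n))\in\{n,n-1\}$ together with the bound $\phi(k)\leqslant \varphi k+\varphi-1$ from Lemma~\ref{lem:floor-Cartier}, just as the paper's one-line justification indicates. You even extract the explicit constant $c=1$, which the paper does not bother to state.
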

\begin{proof}[Proof of Proposition~\ref{pro:non-regular}]
  It follows from~\eqref{eq:non-isolating-poly} that each coefficient~$f_i$
  for $i>0$ satisfies 
   \begin{align}\label{eq:def}
    f_i =
    \begin{cases}
      f_{i-1} + f_{λ(i)}  & \text{if $i ∈ \phi(ℕ)$} \\
      f_{i-1}            & \text{otherwise.}
    \end{cases}
  \end{align}
  Summing up these equations for $i = 0, … , n$ yields
  \begin{displaymath}
    \sum_{i=0}^nf_i = \sum_{i=0}^{n-1}f_i + \sum_{i=0}^{λ(n)} f_i
  \end{displaymath}
  and thus
  \begin{equation}\label{eq:referee}
    f_n = \sum_{i=0}^{λ(n)} f_i .
  \end{equation}
  We now prove by induction on~$k$ that for each integer~$k$, there exists
  a positive constant~$C_k$ such that $f_n ⩾ C_kn^k$ holds for each integer
  $n ⩾ 0$.  It follows easily from~\eqref{eq:def} that each
  coefficient~$f_n$ satisfies $f_n ⩾ 0$ and $f_n ⩾ f_{n-1}$ for $n ⩾ 1$.
  Since $f_0 = 1$, each coefficient satisfies $f_n ⩾ 1$ and the result is
  proved for $k = 0$ with $C_0 = 1$.  Suppose that the statement is true 
   for some $k ⩾ 0$. From~\eqref{eq:referee}, we get
  \begin{displaymath}
    f_n ⩾ C_k∑_{i=0}^{λ(n)} i^k .
  \end{displaymath}
  Using the relation stated in Lemma~\ref{lem:ineq-lambda}, and using
  Faulhaber's formula, we have
  \begin{displaymath}
    f_n ⩾ C_k∑_{i=0}^{λ(n)} i^k ⩾  C_k∑_{i=0}^{n/φ - c} i^k
        ⩾ \frac{C_k}{k+1}\sum_{r=0}^{k} \binom{k+1}{r} B_r^+(n/φ - c)^{k+1-r},
  \end{displaymath}
  and from this one obtains a positive constant~$C_{k+1}$ such that
  $f_n ⩾ C_{k+1}n^{k+1}$ holds for each $n ⩾ 0$.  This proves the statement
  for $k+1$.  By Lemma~\ref{lem:asymptotic-bound}, $f$ is not Z-regular.
\end{proof}

The same technique of Lemma~\ref{lem:ineq-lambda} can be applied to show
that solutions of other Z-Mahler equations are also not Z-regular, with the
same arguments For instance, the non-zero solution of the equation
$(1-x)f(x) = \frac{1}{2} \left(Φ(f(x))+Φ^2(f(x))\right)$, as well as the
solution of the equation is $(1-x^2)f(x) = Φ(f(x))$ are also not Z-regular.
It also follows from the proof of Proposition~\ref{pro:non-regular} that
the solution of the Z-Mahler equation $(1-α x)f(x) = Φ(f(x))$ for $α ⩾ 1$
is also not Z-regular because the coefficients obviously grow faster than
the ones of the solution of $(1-x)f(x) = Φ(f(x))$. The same approach does
not seem to work for the similar Z-Mahler equation $(1+x^2)f(x) = Φ(f(x))$
although it is reasonable to assume that the solution of this equation is
also not Z-regular.  Note that the solution of $(1+x)f(x) = Φ(f(x))$ turns
out to be Z-regular because it is the polynomial $1-x$.

\section*{Conclusion}

In conclusion, we mention a few open problems.  Allouche and Shallit
proved in~\cite[Theorem~2.11]{Allouche-Shallit-1992} that a geometric series $f(x)
= ∑_{n ⩾ 0}α^n x^n$ is $k$-regular if and only if $α$ is either zero or a
root of unity.  We do not have a similar result for Z-regular series. Using
the result by Allouche and Shallit, in \cite{Bell-Chyzak-Coons-Dumas}, the authors characterised
$k$-regular series in terms of the $k$-Mahler equations they satisfy.  We
do not know if there is a similar characterisation for Z-Mahler
equations.

In~\cite[Proposition~7.8]{AdamczewskiBell13}, Adamczewski and Bell gave a series
which is $k$-regular but which is not the solution of an isolating
$k$-Mahler equation ($k$-Becker in their terminology).  We do not have such
an example, of a series which is Z-regular but which is not a solution of
an isolating Z-Mahler equation.

Other questions include whether one can extend existing Cobham type
results. For example, in \cite{AdamczewskiBell13} and \cite{SS-2019}, it was
shown that a series which is both $k$- and $\ell$-Mahler over a field of
characteristic zero, with $k$ and $\ell$ multiplicatively independent, must be
rational.  Which series are both $k$- and Z-Mahler?

\section*{Acknowledgments}
The authors are very grateful to the referee for a heroic and highly useful
report, which has greatly improved this paper.

\bibliographystyle{amsalpha}
\bibliography{christol}

\end{document}